\documentclass[a4paper,10pt]{article}

\usepackage[utf8]{inputenc}
\usepackage{amsmath,amsthm,amssymb}
\usepackage{hyperref}
\usepackage{multirow}
\usepackage{dsfont}
\usepackage{mathrsfs}
\usepackage{geometry}

\newtheorem{theorem}{Theorem}
\newtheorem{lemma}{Lemma}
\newtheorem{corollary}{Corollary}
\newtheorem{definition}{Definition}
\newtheorem{remark}{Remark}
\newtheorem{proposition}{Proposition}

\geometry{lmargin=15mm,rmargin=20mm}

\title{Optimal Distributed and Tangential Boundary Control for the Unsteady Stochastic Stokes Equations}
\date{August 22, 2018}
\author{Peter Benner \thanks{Max Planck Institute for Dynamics of Complex Technical Systems, Sandtorstraße 1, 39106 Magdeburg, Germany (benner@mpi-magdeburg.mpg.de).} \qquad
Christoph Trautwein \thanks{Max Planck Institute for Dynamics of Complex Technical Systems, Sandtorstraße 1, 39106 Magdeburg, Germany (trautwein@mpi-magdeburg.mpg.de).}}

\begin{document}

\maketitle

\begin{abstract}
 We consider a control problem constrained by the unsteady stochastic Stokes equations with nonhomogeneous boundary conditions in connected and bounded domains.
 In this paper, controls are defined inside the domain as well as on the boundary.
 Using a stochastic maximum principle, we derive necessary and sufficient optimality conditions such that explicit formulas for the optimal controls are derived.
 As a consequence, we are able to control the stochastic Stokes equations using distributed controls as well as boundary controls in a desired way.
\end{abstract}
\textbf{Keywords.} Stochastic Stokes equations, Q-Wiener process, Stochastic control, Maximum principle

\section{Introduction}

In this paper, we consider a linear quadratic control problem for the unsteady stochastic Stokes equations with linear multiplicative noise.
Here, controls appear as distributed controls inside the domain as well as tangential controls on the boundary.
Concerning fluid dynamics, noise enters the system due to structural vibration and other environmental effects, see \cite{aitd} and the references therein.
The aim is to find controls such that the velocity field is as close as possible to a given desired velocity field.

In the last decades, optimal control problems constrained by the Stokes equations have been studied extensively.
Simultaneous distributed and boundary controls can be found in \cite{socf}.
In \cite{ocotse,ocot}, discretization schemes for control problems are considered.
For stochastic distributed controls, we refer to \cite{mawr}.
In \cite{ocfm}, an optimal control problem for the Stokes equations is presented, where the viscosity satisfies a transport equation.
A control problem motivated by Stokes flow in an artificial heart is considered in \cite{wpao}.
We extend this setting by allowing additional noise terms arising from random environmental effects.
We overcome this problem by decomposing the external force into a control term and a noise term.
Moreover, control problems are mainly considered for the case of distributed controls.
Therefore, we include nonhomogeneous Dirichlet boundary conditions to involve tangential boundary controls.

Using stochastic processes, one can model structural vibration and other environmental effects affecting flow fields.
This leads us immediately to the formulation of a stochastic partial differential equation, which belongs to the modern research areas of infinite dimensional stochastic analysis.
Such equations can be interpreted as stochastic evolution equations and the solutions are defined in a generalized sense.
There exist different approaches on how to deal with these solutions.
In \cite{seiid,edos,sdei,acco}, the concept of weak solutions is introduced, where the construction in mainly based on inner products.
Using Gelfand triples, another approach is given by variational solutions, see \cite{acco,ses}.
For problems containing a linear operator as the generator of a semigroup on a Hilbert space, one can use mild solutions, see \cite{seiid,edos,sdei}.
Mild solutions are considered as solutions to integral equations of Itô-Volterra type containing a stochastic convolution.
All of these concepts are based on a given probability space and they are called (probabilistic) strong solutions.
Solutions constructing the probability space are called (probabilistic) weak solutions or martingale solutions, see \cite{seiid,edos}.

In this paper, we use the theory of mild solutions in order to cover especially the nonhomogeneous boundary conditions.
The construction of the solution is mainly based on an approach for the deterministic Stokes equations, see \cite{sans}.
Although this approach is applicable for a broad class of boundary conditions, we restrict to the case of tangential boundary conditions.
Therefor, we can reformulate the Stokes equations as an evolution equation in a suitable Hilbert space.
Since we assume that the external force can be decomposed into a control term and a noise term, we obtain immediately a linear stochastic partial differential equation with distributed and Dirichlet boundary controls.
We prove the existence and uniqueness of a mild solution being square integrable with respect to the time variable.
In order to get a well defined solution, we need the definition of stochastic integrals with respect to adapted processes, see \cite{sdei}.

The control problem considered in this paper is formulated as a tracking problem motivated by \cite{apfo,mawr,epfa,fbso,ocot}.
We derive a stochastic maximum principle to obtain first order optimality conditions, which are necessary and sufficient.
To utilize these optimality conditions, a duality principle is required.
In general, a duality principle gives a relation between forward and backward stochastic partial differential equations using an Itô product formula, which is not applicable for mild solutions.
Hence, we approximate the mild solutions by strong solutions using an approach based on the resolvent operator, see \cite{yaos,soss}.
As a consequence, we obtain the duality principle for the approximating strong solutions and due to convergence results, the duality principle holds also for the mild solutions.
Based on the optimality conditions and the duality principle, we deduce formulas the optimal distributed control and the optimal boundary control have to satisfy.

The main contribution of this paper is to provide a mild solution to the stochastic Stokes equations with nonhomogeneous tangential boundary conditions.
Moreover, we solve a control problem using a stochastic maximum principle such that optimal distributed controls and optimal boundary controls are derived.

The paper is organized as follows.
In Section 2, we introduce common spaces and operators concerning the Stokes equations.
Moreover, we discuss the deterministic Stokes equations with nonhomogeneous boundary conditions and we give an introduction to stochastic integrals with respect to adapted processes.
In Section 3, we provide an existence and uniqueness result for the stochastic Stokes equations with nonhomogeneous boundary conditions.
Section 4 addresses the control problem.
We derive optimality conditions and a duality principle such that formulas for the optimal distributed control as well as the optimal boundary control are derived.

\section{Preliminaries}\label{sec:preliminaries}

\subsection{Functional Analysis Background}\label{sec:functionalbackground}

Throughout the paper, let $\mathcal D \subset \mathbb{R}^n$, $n \geq 2$, be a connected and bounded domain with $C^2$ boundary $\partial \mathcal D$.
For $s \geq 0$, let $H^s(\mathcal D)$ denote the usual Sobolev space and for $s \geq \frac{1}{2}$, let $H_0^s(\mathcal D) = \left\{y \in H^s(\mathcal D) \colon y = 0 \text{ on } \partial \mathcal D \right\}$.
We introduce the following common spaces:
\begin{align*}
 H &= \text{Completion of } \{y \in (C^\infty_0(\mathcal D))^n \colon \text{div }y = 0 \text{ in } \mathcal D\} \text{ in } (L^2(\mathcal D))^n \\
 &= \left\{ y \in (L^2(\mathcal D))^n \colon \text{div }y = 0 \text{ in } \mathcal D, y \cdot \eta = 0 \text{ on } \partial \mathcal D\right\}, \\
 V &= \text{Completion of } \{y \in (C^\infty_0(\mathcal D))^n \colon \text{div }y = 0 \text{ in } \mathcal D\} \text{ in } \left(H^1(\mathcal D)\right)^n \\
 &= \left\{ y \in \left(H^1_0(\mathcal D)\right)^n \colon \text{div }y = 0 \text{ in } \mathcal D\right\},
\end{align*}
where $\eta$ denotes the unit outward normal to $\partial \mathcal D$.
The space $H$ equipped with the inner product 
\begin{equation*}
 \langle y,z \rangle_H = \langle y,z \rangle_{(L^2(\mathcal D))^n} = \int\limits_{\mathcal D} \sum_{i =1}^n y_i(x) z_i(x) \, dx
\end{equation*}
for every $y = (y_1,...,y_n), z = (z_1,...,z_n) \in H$ becomes a Hilbert space.
For all $x = (x_1,...,x_n) \in \mathcal D$, we denote $D^j = \frac{\partial^{|j|}}{\partial x_1^{j_1} \cdot \cdot \cdot \partial x_n^{j_n}}$ with $|j| = \sum_{i=1}^n j_i$.
We set $D^j y = (D^j y_1,...,D^j y_n)$ for every $y= (y_1,...,y_n) \in V$ and $|j| \leq 1$.
Then the space $V$ equipped with the inner product
\begin{equation*}
 \langle y,z \rangle_V = \sum_{|j| \leq 1} \langle D^j y , D^j z\rangle_{(L^2(\mathcal D))^n}
\end{equation*}
for every $y,z \in V$ becomes a Hilbert space.
The norm in $H$ and $V$ is denoted by $\|\cdot\|_H$ and $\|\cdot\|_V$, respectively.
We get the orthogonal Helmholtz decomposition
\begin{equation*}
 (L^2(\mathcal D))^n = H \oplus \{\nabla y : y \in H^1(\mathcal D)\},
\end{equation*}
where $\oplus$ denotes the direct sum.
Then there exists an orthogonal projection $\Pi \colon (L^2(\mathcal D))^n \rightarrow H$, see \cite{alto}.
Next, we define the Stokes Operator $A \colon D(A) \subset H \rightarrow H$ by $A y = - \Pi \Delta y$ for every $y \in D(A)$, where $D(A) = \left(H^2(\mathcal D)\right)^n \cap V$.
The Stokes operator $A$ is positive, self adjoint and has a bounded inverse.
Moreover, the operator $-A$ is the infinitesimal generator of an analytic semigroup $(e^{-At})_{t \geq 0}$ such that $\left\| e^{-At} \right\|_{\mathcal L(H)} \leq 1$ for all $t \geq 0$. 
For more details, see \cite{ofpo,aots,silo,c0sa}.
Hence, we can introduce fractional powers of the Stokes operator, see \cite{solo,teon,c0sa}.
For $\alpha > 0$, we define
\begin{equation}\label{negativefractionalequation}
 A^{-\alpha} = \frac{1}{\Gamma(\alpha)} \int\limits_0^\infty t^{\alpha-1} e^{-A t} dt,
\end{equation}
where $\Gamma (\cdot)$ denotes the gamma function.
The operator $A^{-\alpha}$ is linear, bounded and injective in $H$.
Hence, we define for all $\alpha > 0$
\begin{equation*}
 A^\alpha = \left(A^{-\alpha}\right)^{-1}.
\end{equation*}
Moreover, we set $A^0 = I$, where $I$ is the identity operator in $H$.
For $\alpha > 0$, the operator $A^\alpha$ is linear and closed in $H$ with dense domain $D(A^\alpha) = R(A^{-\alpha})$, where $R(A^{-\alpha})$ denotes the range of $A^{-\alpha}$.
Next, we provide some useful properties of fractional powers of the Stokes operator.

\begin{lemma}[cf. Section 2.6,{\cite{solo}}]\label{fractional}
 Let $A \colon D(A) \subset H \rightarrow H$ be the Stokes operator.
 Then
 \begin{itemize}
  \item[(i)] for $\alpha, \beta \in \mathbb R$, we have $A^{\alpha+\beta} y = A^\alpha A^\beta y$ for every $y \in D(A^\gamma)$, where $\gamma = \max \{\alpha,\beta,\alpha+\beta\}$,
  \item[(ii)] $e^{-At} \colon H \rightarrow D(A^\alpha)$ for all $t>0$ and $\alpha \geq 0$,
  \item[(iii)] we have $A^\alpha e^{-At} y = e^{-At} A^\alpha y$ for every $y \in D(A^\alpha)$ with $\alpha \in \mathbb R$,
  \item[(iv)] the operator $A^\alpha e^{-At}$ is bounded for all $t>0$ and there exist constants $M_\alpha,\theta > 0$ such that
  \begin{equation*}
   \left\| A^{\alpha}e^{-At} \right\|_{\mathcal L(H)} \leq M_\alpha t^{-\alpha}e^{-\theta t},
  \end{equation*}
  \item[(v)] $0 \leq \beta \leq \alpha \leq 1$ implies $D(A^\alpha) \subset D(A^\beta)$ and there exists a constant $C > 0$ such that for every $y \in D(A^\alpha)$
  \begin{equation*}
   \left\| A^\beta y \right\|_H \leq C \left\| A^{\alpha} y\right\|_H.
  \end{equation*}
 \end{itemize}
\end{lemma}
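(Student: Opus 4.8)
The plan is to exploit the structural facts already recorded: $A$ is positive, self-adjoint and boundedly invertible, so $\sigma(A)\subset[\lambda_0,\infty)$ with $\lambda_0>0$, and $-A$ generates a bounded analytic semigroup. By the spectral theorem there is a projection-valued measure $E$ on $[\lambda_0,\infty)$ with $A=\int_{\lambda_0}^\infty\lambda\,dE(\lambda)$, and the functional calculus realizes $A^\alpha=\int_{\lambda_0}^\infty\lambda^\alpha\,dE(\lambda)$ and $e^{-At}=\int_{\lambda_0}^\infty e^{-\lambda t}\,dE(\lambda)$; one first checks that this definition of $A^{-\alpha}$ coincides with the Bochner integral in \eqref{negativefractionalequation} by inserting $e^{-At}=\int e^{-\lambda t}\,dE(\lambda)$ and evaluating $\frac{1}{\Gamma(\alpha)}\int_0^\infty t^{\alpha-1}e^{-\lambda t}\,dt=\lambda^{-\alpha}$. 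Under this identification each of (i)--(v) reduces to an elementary statement about the scalar functions $\lambda\mapsto\lambda^\alpha$ and $\lambda\mapsto e^{-\lambda t}$ on $[\lambda_0,\infty)$. Alternatively, without using self-adjointness, one runs the classical argument for fractional powers of generators of bounded analytic semigroups, in which $A^{-\alpha}$ is given by \eqref{negativefractionalequation} and $A^\alpha e^{-At}$ by a Dunford contour integral.

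For (i): prove the composition law first for negative exponents. Substituting \eqref{negativefractionalequation} for $A^{-\alpha}$ and $A^{-\beta}$, interchanging the two integrals (justified by $\left\|e^{-At}\right\|_{\mathcal L(H)}\le e^{-\lambda_0 t}$), and using the change of variables $t=s+r$, one recognizes the Beta integral $\int_0^1 u^{\alpha-1}(1-u)^{\beta-1}\,du=\Gamma(\alpha)\Gamma(\beta)/\Gamma(\alpha+\beta)$ and obtains $A^{-\alpha}A^{-\beta}=A^{-(\alpha+\beta)}$ for $\alpha,\beta>0$. Since positive powers are defined as inverses of negative ones, the remaining sign combinations follow by composing and inverting these identities; the condition $\gamma=\max\{\alpha,\beta,\alpha+\beta\}$ simply records the largest power that must act on $y$ for every intermediate expression in $A^{\alpha+\beta}y=A^\alpha A^\beta y$ to be defined. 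For (ii): analyticity gives $e^{-At}H\subset\bigcap_{k\in\mathbb N}D(A^k)$ for all $t>0$, and $D(A^k)\subset D(A^\alpha)$ whenever $k\ge\alpha$ (by the argument of (v) below, or directly from the spectral picture), so $e^{-At}H\subset D(A^\alpha)$. For (iii): the semigroup commutes with every resolvent $R(\mu,-A)$, hence with the Bochner integral \eqref{negativefractionalequation} defining $A^{-\alpha}$, i.e. $e^{-At}A^{-\alpha}=A^{-\alpha}e^{-At}$; applying $A^\alpha$ to both sides and using $y=A^{-\alpha}A^\alpha y$ for $y\in D(A^\alpha)$ yields $A^\alpha e^{-At}y=e^{-At}A^\alpha y$.

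For (iv): write $A^\alpha e^{-At}=\left(A^\alpha e^{-At/2}\right)e^{-At/2}$. In the spectral representation,
\begin{equation*}
 \left\|A^\alpha e^{-At}\right\|_{\mathcal L(H)} \le \sup_{\lambda\ge\lambda_0}\lambda^\alpha e^{-\lambda t} \le \left(\sup_{\mu>0}\mu^\alpha e^{-\mu t/2}\right)e^{-\lambda_0 t/2} = \left(\frac{2\alpha}{e}\right)^\alpha t^{-\alpha}e^{-\lambda_0 t/2},
\end{equation*}
since $\mu\mapsto\mu^\alpha e^{-\mu t/2}$ attains its maximum at $\mu=2\alpha/t$ with value $(2\alpha/(et))^\alpha$; this gives the claim with $M_\alpha=(2\alpha/e)^\alpha$ and $\theta=\lambda_0/2$ (the non-self-adjoint route instead estimates the contour integral for $A^\alpha e^{-At}$ over the sector on which the resolvent is bounded). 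For (v): if $y\in D(A^\alpha)=R(A^{-\alpha})$, write $y=A^{-\alpha}z$ with $z=A^\alpha y$; since $0\le\alpha-\beta\le 1$ the operator $A^{-(\alpha-\beta)}$ is bounded, so $y=A^{-\beta}\left(A^{-(\alpha-\beta)}z\right)\in R(A^{-\beta})=D(A^\beta)$, and by (i) we get $A^\beta y=A^{-(\alpha-\beta)}A^\alpha y$, whence $\left\|A^\beta y\right\|_H\le\left\|A^{-(\alpha-\beta)}\right\|_{\mathcal L(H)}\left\|A^\alpha y\right\|_H=:C\left\|A^\alpha y\right\|_H$.

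I expect the only genuinely delicate point to be part (i): the Fubini interchange and Beta-function evaluation for negative exponents are routine, but extending the composition law to arbitrary real $\alpha,\beta$ requires careful bookkeeping of domains, because $A^\alpha$ is unbounded for $\alpha>0$ and the compositions need not a priori be defined; one must verify that $\gamma=\max\{\alpha,\beta,\alpha+\beta\}$ is exactly the regularity making every term in $A^{\alpha+\beta}y=A^\alpha A^\beta y$ meaningful. Parts (ii)--(v) then follow either immediately from the spectral calculus or from (i) together with boundedness of $A^{-\alpha}$ and analyticity of $(e^{-At})_{t\ge0}$.
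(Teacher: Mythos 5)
The paper does not prove this lemma at all: it is quoted verbatim from the literature (Section 2.6 of \cite{solo}, the standard reference on fractional powers of generators of analytic semigroups), so there is no in-paper argument to compare against. Your proposal is, as far as I can check, a correct self-contained proof, and it takes a route that the cited source deliberately avoids: you exploit the self-adjointness and strict positivity of the Stokes operator to pass to the spectral theorem, under which $A^{-\alpha}$ from \eqref{negativefractionalequation} is identified with $\int\lambda^{-\alpha}\,dE(\lambda)$ via the Gamma-integral identity, and then (ii)--(v) collapse to elementary estimates on the scalar functions $\lambda^{\alpha}e^{-\lambda t}$ on $[\lambda_0,\infty)$ --- your explicit constants $M_\alpha=(2\alpha/e)^{\alpha}$, $\theta=\lambda_0/2$ in (iv) and $C=\|A^{-(\alpha-\beta)}\|_{\mathcal L(H)}$ in (v) are right. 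The reference instead works with general sectorial operators via Dunford contour integrals (the alternative you mention in passing), which is what one needs when self-adjointness is unavailable but is overkill here. The only place where your sketch is thinner than a full proof is the one you flag yourself: extending $A^{-\alpha}A^{-\beta}=A^{-(\alpha+\beta)}$ (Fubini plus the Beta integral, which you set up correctly) to arbitrary real exponents requires checking, case by case in the signs of $\alpha$, $\beta$, $\alpha+\beta$, that $y\in D(A^{\gamma})$ with $\gamma=\max\{\alpha,\beta,\alpha+\beta\}$ makes every intermediate expression well defined; this is routine domain bookkeeping and does not hide a genuine gap, but a referee would want it written out for at least one mixed-sign case.
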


As a consequence of the previous lemma, we obtain that the space $D(A^\alpha)$ for all $\alpha \geq 0$ equipped with the inner product 
\begin{equation*}
 \langle y,z\rangle_{D(A^\alpha)} = \langle A^\alpha y,A^\alpha z\rangle_H
\end{equation*}
for every $y,z \in D(A^\alpha)$ becomes a Hilbert space.
Furthermore, we get the following result.

\begin{lemma}\label{fractionalselfadjoint}
 Let $A \colon D(A) \subset H \rightarrow H$ be the Stokes operator.
 Then the operator $A^\alpha$ is self adjoint for all $\alpha \in \mathbb R$.
\end{lemma}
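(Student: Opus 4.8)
The plan is to reduce the statement to the self-adjointness of the bounded operators $A^{-\alpha}$ for $\alpha>0$ and then dualize. First I would dispose of the trivial case $\alpha=0$, where $A^0=I$ is obviously self-adjoint.

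Next, for $\alpha<0$ write $\alpha=-\beta$ with $\beta>0$ and use the integral representation \eqref{negativefractionalequation}. Since $A$ is self-adjoint, the analytic (hence $C_0$) semigroup it generates satisfies $(e^{-At})^\ast=e^{-A^\ast t}=e^{-At}$ for every $t\ge 0$, this being the standard fact that the adjoint semigroup is generated by the adjoint generator. Consequently, for all $y,z\in H$,
\[
 \langle A^{-\beta}y,z\rangle_H=\frac{1}{\Gamma(\beta)}\int_0^\infty t^{\beta-1}\langle e^{-At}y,z\rangle_H\,dt=\frac{1}{\Gamma(\beta)}\int_0^\infty t^{\beta-1}\langle y,e^{-At}z\rangle_H\,dt=\langle y,A^{-\beta}z\rangle_H,
\]
where moving the bounded functional $\langle\,\cdot\,,z\rangle_H$ through the Bochner integral is justified by Lemma \ref{fractional}(iv), which guarantees absolute integrability of the integrand. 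Hence $A^\alpha=A^{-\beta}$ is bounded and self-adjoint.

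Finally, for $\alpha>0$ I would use $A^\alpha=(A^{-\alpha})^{-1}$, where $A^{-\alpha}$ is bounded, injective and, by the previous step, self-adjoint, with dense range $R(A^{-\alpha})=D(A^\alpha)$. The remaining point is the purely operator-theoretic fact that the inverse of a bounded, injective, self-adjoint operator $S$ is self-adjoint on its natural domain $R(S)$: symmetry follows from $\langle S^{-1}Sx,Sy\rangle_H=\langle x,Sy\rangle_H=\langle Sx,y\rangle_H=\langle Sx,S^{-1}Sy\rangle_H$ for all $x,y\in H$, and if $z\in D((S^{-1})^\ast)$ with $(S^{-1})^\ast z=w$, then $\langle x,z\rangle_H=\langle S^{-1}Sx,z\rangle_H=\langle Sx,w\rangle_H=\langle x,Sw\rangle_H$ for all $x\in H$, so $z=Sw\in R(S)=D(S^{-1})$; thus $(S^{-1})^\ast=S^{-1}$. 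Applying this with $S=A^{-\alpha}$ gives the claim.

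I do not expect a substantial obstacle: the only mildly delicate steps are the identity $(e^{-At})^\ast=e^{-At}$ and the interchange of adjoint and Bochner integral, both routine. As an alternative one could bypass the semigroup representation and invoke the spectral theorem for the positive self-adjoint operator $A$, writing $A^\alpha=\int_{(0,\infty)}\lambda^\alpha\,dE_\lambda$; since $\lambda\mapsto\lambda^\alpha$ is real-valued on $(0,\infty)$ for every real $\alpha$, self-adjointness is immediate, but this would be slightly out of line with the fractional-power framework set up above.
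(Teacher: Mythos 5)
Your proof is correct, and it coincides with the paper's for $\alpha\le 0$: both dispose of $\alpha=0$ trivially and both prove self-adjointness of $A^{-\beta}$, $\beta>0$, by pulling the inner product through the Bochner integral in the representation \eqref{negativefractionalequation} and using that the semigroup generated by the self-adjoint operator $-A$ is self-adjoint. The two arguments part ways on the positive exponents. The paper writes, for $y,z\in D(A^\alpha)$,
\[
 \left\langle A^\alpha y, z\right\rangle_H=\left\langle A^\alpha y, A^{-\alpha}A^\alpha z\right\rangle_H=\left\langle A^{-\alpha}A^\alpha y, A^\alpha z\right\rangle_H=\left\langle y, A^\alpha z\right\rangle_H,
\]
i.e.\ it inserts $A^{-\alpha}A^\alpha=I$ and moves the already-treated $A^{-\alpha}$ across; strictly speaking this only exhibits $A^\alpha$ as \emph{symmetric} on $D(A^\alpha)$ and leaves the domain identity $D((A^\alpha)^\ast)=D(A^\alpha)$ implicit. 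You instead invoke (and prove) the general operator-theoretic fact that the inverse of a bounded, injective, self-adjoint operator with dense range is self-adjoint on that range, including the inclusion $D((S^{-1})^\ast)\subset R(S)$ via $z=Sw$. That extra step is exactly what upgrades symmetry to self-adjointness, so your version is the more complete of the two; the paper's is shorter but relies on the reader supplying the same domain argument (or on the spectral-theorem shortcut you mention as an alternative). No gap in your proposal.
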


\begin{proof}
 First, we show the claim for negative exponents.
 Recall the Stokes operator $-A$ is self adjoint.
 Hence, the semigroup $(e^{-At})_{t \geq 0}$ is self adjoint as well.
 By equation (\ref{negativefractionalequation}), we get for every $y,z \in \mathcal H$ and all $\alpha>0$
 \begin{align}\label{negativeeq}
  \left \langle A^{-\alpha} y, z\right \rangle_{\mathcal H}
  &= \left \langle \frac{1}{\Gamma(\alpha)} \int\limits_0^\infty t^{\alpha-1} S(t) y dt , z\right \rangle_{\mathcal H} = \frac{1}{\Gamma(\alpha)} \int\limits_0^\infty t^{\alpha-1} \left \langle S(t) y , z\right \rangle_{\mathcal H} dt \nonumber \\
  &= \left \langle y, \frac{1}{\Gamma(\alpha)} \int\limits_0^\infty t^{\alpha-1} S(t) z dt \right \rangle_{\mathcal H} = \left \langle y, A^{-\alpha} z\right \rangle_{\mathcal H}.
 \end{align}
 Next, we show the claim for positive exponents.
 Using Theorem \ref{fractional} (iv) and equation (\ref{negativeeq}), we obtain for every $y,z \in D(A^\alpha)$ and all $\alpha>0$
 \begin{equation*}
  \left \langle A^\alpha y, z\right \rangle_{\mathcal H} = \left \langle A^\alpha y, A^{-\alpha} A^\alpha z\right \rangle_{\mathcal H} = \left \langle A^{-\alpha} A^\alpha y, A^\alpha z\right \rangle_{\mathcal H} = \left \langle y, A^\alpha z\right \rangle_{\mathcal H}.
 \end{equation*}
 For $\alpha = 0$, the claim is obvious.
\end{proof}

Next, we introduce the resolvent operator of $-A$ and we state some of its basic properties.
For more details, see \cite{solo}.
Let $\lambda \in \mathbb R$ be such that $\lambda I + A$ is invertible, i.e. $(\lambda I + A)^{-1}$ is a linear and bounded operator in the space $H$.
Then the operator $R(\lambda;-A) = (\lambda I + A)^{-1}$ is called the resolvent operator.
The operator $R(\lambda;-A)$ maps $H$ into $D(A)$ and using the closed graph theorem, we can conclude that the operator $A R(\lambda;-A) \colon H \rightarrow H$ is linear and bounded.
Moreover, we have the following representation:
\begin{equation} \label{resolventoperator}
 R(\lambda;-A) = \int\limits_0^\infty e^{-\lambda r} e^{-A r} dr.
\end{equation}
For all $\lambda > 0$, we get 
\begin{equation*}
 \|R(\lambda;-A)\|_{\mathcal L (H)} \leq \frac{1}{\lambda}
\end{equation*}
and since the semigroup $(e^{-At})_{t \geq 0}$ is self adjoint, the operator $R(\lambda;-A)$ is self adjoint as well.
Let the operator $R(\lambda) \colon H \rightarrow D(A)$ be defined by $R(\lambda) = \lambda R(\lambda;-A)$.
Hence, we get for all $\lambda > 0$
\begin{equation}\label{resolventinequality}
 \|R(\lambda)\|_{\mathcal L (H)} \leq 1.
\end{equation}
By Lemma \ref{fractional} (iii) and equation (\ref{resolventoperator}), we obtain for every $y \in D(A^\alpha)$ with $\alpha \in \mathbb R$
\begin{equation}\label{resolventcommutes}
 A^\alpha R(\lambda) y = R(\lambda) A^\alpha y.
\end{equation}
Moreover, we have for every $y \in H$
\begin{equation}\label{resolventconvergence}
 \lim\limits_{\lambda \rightarrow \infty} \| R(\lambda) y - y\|_H = 0.
\end{equation}

If the domain $\mathcal D$ is connected and bounded with $C^\infty$ boundary $\partial \mathcal D$, then we can specify the domain of the operator $A^\alpha$ for $\alpha \in (0,1)$ explicitly.
Let $A_D \colon D(A_D) \subset \left(L^2(\mathcal D)\right)^n \rightarrow \left(L^2(\mathcal D)\right)^n$ be the Laplace operator with homogeneous Dirichlet boundary condition defined by $A_D y = -\Delta y$ for all $y \in D(A_D)$.
The domain is given by 
\begin{equation*}
 D(A_D) = \left(H_0^1(\mathcal D)\right)^n \cap \left(H^2(\mathcal D)\right)^n.
\end{equation*}
Then $A_D$ is a positive and self adjoint operator and $-A_D$ is the infinitesimal generator of an analytic semigroup $(e^{-A_D t})_{t \geq 0}$ such that $\left\| e^{-A_D t} \right\|_{\mathcal L(H)} \leq 1$ for all $t \geq 0$.
Hence, we can define fractional powers of the Laplace operator denoted by $A^\alpha_D$ for $\alpha \in \mathbb R$.
We get the following result.

\begin{proposition}[Theorem 1.1,{\cite{ofpo}}]\label{domainfractional1}
 Let the operator $A \colon D(A) \subset H \rightarrow H$ be the Stokes Operator and let the operator $A_D \colon D(A_D) \subset \left(L^2(\mathcal D)\right)^n \rightarrow \left(L^2(\mathcal D)\right)^n$ be the Laplace operator with homogeneous Dirichlet boundary condition.
 Then we have for any $\alpha \in (0,1)$
 \begin{equation*}
  D(A^\alpha) = D(A_D^\alpha) \cap H.
 \end{equation*}
\end{proposition}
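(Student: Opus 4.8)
The plan is to combine the (trivial) endpoint identities with complex interpolation for one inclusion, and to reduce the other inclusion to a resolvent estimate controlling the pressure correction between the Stokes operator and the Dirichlet Laplacian. First I would record that the endpoints carry no information beyond the definitions: $D(A^0)=H=(L^2(\mathcal D))^n\cap H$, and since $V=(H^1_0(\mathcal D))^n\cap H$ one also has $D(A)=(H^2(\mathcal D))^n\cap V=D(A_D)\cap H$ with $Ay=-\Pi\Delta y=\Pi A_D y$ there. These identities do \emph{not} propagate to intermediate $\alpha$ by a naive retract argument, because the Helmholtz projection $\Pi$ — although bounded on $(L^2(\mathcal D))^n$ and on $(H^s(\mathcal D))^n$ for $0\le s\le 2$ — does not map $D(A_D)$ into $D(A)$: for $y\in D(A_D)$ write $y=\Pi y+\nabla q$ with $q$ solving the Neumann problem $\Delta q=\text{div}\, y$ in $\mathcal D$, $\frac{\partial q}{\partial\eta}=y\cdot\eta=0$ on $\partial\mathcal D$; then $\Pi y\in(H^2(\mathcal D))^n$ is divergence free with vanishing normal component on $\partial\mathcal D$, but the tangential component of $\Pi y=y-\nabla q$ on $\partial\mathcal D$ need not vanish, so $\Pi y\notin V$ in general. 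I would also use the classical facts that $A$ and $A_D$ are positive self adjoint, hence have bounded imaginary powers, so that for $\alpha\in(0,1)$ one has, with equivalent norms, $D(A^\alpha)=[H,D(A)]_\alpha$ and $D(A_D^\alpha)=[(L^2(\mathcal D))^n,D(A_D)]_\alpha$ with $[\cdot,\cdot]_\alpha$ the complex interpolation functor, together with the elliptic regularity estimates $\|y\|_{(H^2(\mathcal D))^n}\simeq\|Ay\|_H$ on $D(A)$ and $\|y\|_{(H^2(\mathcal D))^n}\simeq\|A_Dy\|_{(L^2(\mathcal D))^n}$ on $D(A_D)$.

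The inclusion $D(A^\alpha)\subseteq D(A_D^\alpha)\cap H$ is then easy: the regularity estimates give a continuous inclusion $D(A)\hookrightarrow D(A_D)$, and $H\hookrightarrow(L^2(\mathcal D))^n$ with $D(A)\subseteq H$ and $H$ closed, so functoriality of complex interpolation yields $D(A^\alpha)=[H,D(A)]_\alpha\hookrightarrow[(L^2(\mathcal D))^n,D(A_D)]_\alpha=D(A_D^\alpha)$ and $[H,D(A)]_\alpha\subseteq H$; equivalently $\|A_D^\alpha y\|_{(L^2(\mathcal D))^n}\le C\|A^\alpha y\|_H$ for all $y\in D(A^\alpha)$.

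The reverse inclusion $D(A_D^\alpha)\cap H\subseteq D(A^\alpha)$ is the heart of the matter and the step I expect to be the main obstacle. By the spectral theorem for the positive self adjoint operator $A$, one has $y\in D(A^\alpha)$ if and only if $\int_0^\infty\lambda^{2\alpha-1}\|A(\lambda I+A)^{-1}y\|_H^2\,d\lambda<\infty$, and likewise for $A_D$. Fix $y\in D(A_D^\alpha)\cap H$; then the $A_D$-integral is finite, and since $\|\Pi\|_{\mathcal L((L^2(\mathcal D))^n)}=1$ the same holds with $\Pi A_D(\lambda I+A_D)^{-1}y$ in place of $A_D(\lambda I+A_D)^{-1}y$. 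Writing $A(\lambda I+A)^{-1}y=y-\lambda(\lambda I+A)^{-1}y$ and $A_D(\lambda I+A_D)^{-1}y=y-\lambda(\lambda I+A_D)^{-1}y$ and using $\Pi y=y$, it suffices to show
\begin{equation*}
 \int_0^\infty\lambda^{2\alpha+1}\left\|(\lambda I+A)^{-1}y-\Pi(\lambda I+A_D)^{-1}y\right\|_H^2\,d\lambda<\infty .
\end{equation*}
Setting $u_\lambda=(\lambda I+A)^{-1}y\in D(A)$, $v_\lambda=(\lambda I+A_D)^{-1}y\in D(A_D)$ and $w_\lambda=u_\lambda-\Pi v_\lambda$, one checks that $\lambda u_\lambda-\Pi\Delta u_\lambda=y$ and, applying $\Pi$ to $\lambda v_\lambda-\Delta v_\lambda=y$ and using that $\Pi$ kills gradients, $\lambda\Pi v_\lambda-\Pi\Delta\Pi v_\lambda=y$; hence $w_\lambda\in(H^2(\mathcal D))^n\cap H$ solves $\lambda w_\lambda-\Pi\Delta w_\lambda=0$ and has purely tangential boundary trace, equal to that of the pressure gradient $\nabla q_\lambda$ associated to the Helmholtz decomposition of $v_\lambda$, the Dirichlet condition $u_\lambda=0$ on $\partial\mathcal D$ contributing nothing. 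Testing against $w_\lambda$ reduces the estimate to bounding $\int_{\partial\mathcal D}\frac{\partial w_\lambda}{\partial\eta}\cdot w_\lambda\,dS$ by elliptic estimates for the Neumann problem defining $q_\lambda$ together with resolvent bounds for $v_\lambda$ in $(H^1(\mathcal D))^n$; the real difficulty is to extract enough decay in $\lambda$, uniformly over the whole range $\alpha\in(0,1)$, for the displayed integral to converge — this sharp resolvent estimate, which exploits the cancellation afforded by the vanishing Dirichlet trace of $u_\lambda$, is exactly the content of Theorem~1.1 in \cite{ofpo}, which I would invoke rather than reprove. The two inclusions together give $D(A^\alpha)=D(A_D^\alpha)\cap H$ and the equivalence of the corresponding graph norms.
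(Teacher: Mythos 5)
The paper does not prove this proposition at all: it is imported verbatim as Theorem~1.1 of \cite{ofpo} (Giga's result on domains of fractional powers of the Stokes operator), so there is no internal proof to compare against. Judged on its own terms, your forward inclusion is fine: $D(A)=(H^2(\mathcal D))^n\cap V$ embeds continuously into $D(A_D)=(H^2(\mathcal D))^n\cap (H^1_0(\mathcal D))^n$ by Stokes elliptic regularity, both operators are positive self adjoint on a Hilbert space so $D(A^\alpha)=[H,D(A)]_\alpha$ and $D(A_D^\alpha)=[(L^2(\mathcal D))^n,D(A_D)]_\alpha$ hold by the spectral theorem, and functoriality of interpolation gives $D(A^\alpha)\subseteq D(A_D^\alpha)\cap H$. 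Your observation that $\Pi$ fails to map $D(A_D)$ into $D(A)$ (the tangential trace obstruction) is also correct and is precisely why the reverse inclusion is nontrivial.

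The gap is in the reverse inclusion. Your reduction to the convergence of $\int_0^\infty\lambda^{2\alpha+1}\|(\lambda I+A)^{-1}y-\Pi(\lambda I+A_D)^{-1}y\|_H^2\,d\lambda$ is a legitimate reformulation via the spectral characterization of $D(A^\alpha)$, but you then close the argument by asserting that the required decay estimate \emph{is} Theorem~1.1 of \cite{ofpo}. That is a mis-attribution and makes the argument circular: Theorem~1.1 of \cite{ofpo} is the domain identity $D(A^\alpha)=D(A_D^\alpha)\cap H$ itself, i.e.\ the statement you are trying to prove, not an auxiliary resolvent bound. Giga's actual route is different — it runs through boundedness of the purely imaginary powers of $A$ and $A_D$, complex interpolation, and a separate argument that the Helmholtz projection acts boundedly on the relevant interpolation scale — and does not pass through the pointwise-in-$\lambda$ resolvent-difference estimate you set up. So either you must actually prove the displayed integral converges (the boundary-integral identity $\lambda\|w_\lambda\|_H^2+\|\nabla w_\lambda\|_{(L^2)^{n\times n}}^2=\int_{\partial\mathcal D}\tfrac{\partial w_\lambda}{\partial\eta}\cdot w_\lambda\,dS$ you gesture at still requires uniform-in-$\alpha$ decay in $\lambda$ that you do not supply), or you should simply cite \cite{ofpo} for the whole proposition, as the paper does, rather than for an intermediate step it does not contain.
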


The domain of the operator $A_D^\alpha$ can be determined explicitly for $\alpha \in (0,1)$.

\begin{proposition}[cf. Theorem 1,{\cite{ccot}}]\label{domainfractional2}
 Let $A_D \colon D(A_D) \subset \left(L^2(\mathcal D)\right)^n \rightarrow \left(L^2(\mathcal D)\right)^n$ be the Laplace operator with homogeneous Dirichlet boundary condition.
 Then we have
 \begin{itemize}
  \item[(i)]  $D(A_D^\alpha) = \left(H^{2 \alpha} (\mathcal D)\right)^n$ for $\alpha \in \left(0,\frac{1}{4}\right)$,
  \item[(ii)] $D(A_D^{1/4}) \subset \left(H^{1/2} (\mathcal D)\right)^n$,
  \item[(iii)] $D(A_D^\alpha) = \left(H_0^{2 \alpha} (\mathcal D)\right)^n$ for $\alpha \in \left(\frac{1}{4},\frac{3}{4}\right)$,
  \item[(ii)] $D(A_D^{3/4}) \subset \left(H_0^{3/2} (\mathcal D)\right)^n$,
  \item[(v)]  $D(A_D^\alpha) = \left(H_0^{2 \alpha} (\mathcal D)\right)^n$ for $\alpha \in \left(\frac{3}{4},1\right)$.
 \end{itemize}
\end{proposition}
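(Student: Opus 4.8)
The plan is to identify $D(A_D^\alpha)$ with a Sobolev space by means of interpolation theory and then to invoke the classical interpolation identities for Sobolev spaces on bounded domains with smooth boundary due to Lions and Magenes (see also Grisvard and Triebel). First I would reduce to the scalar case: the operator $A_D$ acts componentwise as the scalar Dirichlet Laplacian $-\Delta_D$ on $L^2(\mathcal D)$ with domain $H^2(\mathcal D)\cap H_0^1(\mathcal D)$, so that $A_D^\alpha = \operatorname{diag}\big((-\Delta_D)^\alpha,\dots,(-\Delta_D)^\alpha\big)$ and hence $D(A_D^\alpha) = \big(D((-\Delta_D)^\alpha)\big)^n$. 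It therefore suffices to characterise $D((-\Delta_D)^\alpha)\subset L^2(\mathcal D)$ for $\alpha\in(0,1)$.

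Since $-\Delta_D$ is a positive self-adjoint operator on the Hilbert space $L^2(\mathcal D)$, the spectral theorem shows that, with equivalent norms,
\begin{equation*}
 D\big((-\Delta_D)^\alpha\big) = \big[L^2(\mathcal D),\, H^2(\mathcal D)\cap H_0^1(\mathcal D)\big]_\alpha = \big(L^2(\mathcal D),\, H^2(\mathcal D)\cap H_0^1(\mathcal D)\big)_{\alpha,2},
\end{equation*}
that is, the domain of the fractional power coincides with the complex interpolation space between $L^2(\mathcal D)$ and $D(-\Delta_D)$, which for Hilbert spaces agrees with the real interpolation space of exponent $\alpha$ and second summability index. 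The classical interpolation results then give, writing $s = 2\alpha \in (0,2)$,
\begin{equation*}
 \big[L^2(\mathcal D),\, H^2(\mathcal D)\cap H_0^1(\mathcal D)\big]_{s/2} =
 \begin{cases}
  H^s(\mathcal D), & 0\le s<\tfrac12,\\[2pt]
  H_0^s(\mathcal D), & \tfrac12<s<\tfrac32\ \text{or}\ \tfrac32<s<2,
 \end{cases}
\end{equation*}
where $H_0^s(\mathcal D)$ denotes the closed subspace of $H^s(\mathcal D)$ determined by the vanishing trace. Substituting $\alpha = s/2$ yields the equalities (i), (iii) and (v). At the two exceptional exponents $s=\tfrac12$ and $s=\tfrac32$, i.e.\ $\alpha=\tfrac14$ and $\alpha=\tfrac34$, the interpolation space is not an ordinary Sobolev space but the Lions--Magenes space $H_{00}^s(\mathcal D)$, which only embeds continuously into $H^{1/2}(\mathcal D)$, respectively into $H_0^{3/2}(\mathcal D)$; this gives the inclusions (ii) and (iv).

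The step I expect to be the main obstacle is precisely the behaviour at the exceptional exponents $s=\tfrac12$ and $s=\tfrac32$: showing that the interpolation space is the strictly smaller Lions--Magenes space, so that only one-sided inclusions hold there, rests on Hardy-type inequalities controlling $\operatorname{dist}(x,\partial\mathcal D)^{-1/2}u(x)$ near $\partial\mathcal D$, which is the delicate part of the theory. By contrast, the reduction to the scalar Dirichlet Laplacian, the identification of $D((-\Delta_D)^\alpha)$ with an interpolation space via self-adjointness, and the interpolation computation for $s$ bounded away from $\tfrac12$ and $\tfrac32$ are routine.
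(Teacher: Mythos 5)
The paper does not actually prove this proposition: it is imported as a known result (cf.\ Theorem 1 of the cited reference \cite{ccot}) and used as a black box, so there is no in-paper argument to compare yours against. Your derivation --- componentwise reduction to the scalar Dirichlet Laplacian, the identification $D((-\Delta_D)^\alpha)=[L^2(\mathcal D),H^2(\mathcal D)\cap H^1_0(\mathcal D)]_\alpha$ valid for positive self-adjoint operators on a Hilbert space, and the Grisvard/Lions--Magenes computation of these interpolation spaces --- is precisely the standard proof underlying that reference, and it is sound. The only point deserving more care than you give it is the exceptional exponent $s=3/2$: for the couple $\left(L^2,\,H^2\cap H^1_0\right)$ the genuinely singular interpolation value in Grisvard's theorem is $s=\tfrac12$ (where the Dirichlet trace functional degenerates), and the special treatment of $\alpha=\tfrac34$ in the statement is tied to the convention adopted for $H^{3/2}_0$ rather than to a second Hardy-type obstruction of the same nature as at $s=\tfrac12$; since only an inclusion is asserted there, your argument still delivers the stated conclusion.
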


\subsection{The Stokes Equations}\label{sec:stokesequations}

In this section, we consider the deterministic Stokes equations with nonhomogeneous boundary conditions.
Here, we restrict the problem to tangential boundary conditions.
A general formulation can be found in \cite{sans}.

Throughout the paper, let $T>0$.
We introduce the Stokes equations with nonhomogeneous boundary conditions:
\begin{equation}\label{detstokes}
 \left\{
 \begin{aligned}
  \frac{\partial}{\partial t} y(t,x) - \Delta y(t,x) + \nabla p(t,x) &= f(t,x) & &\text{in } (0,T) \times \mathcal D, \\
  \text{div } y(t,x) &= 0 & &\text{in } (0,T) \times \mathcal D, \\
  y(t,x) &= g(t,x) & &\text{on } (0,T) \times \partial \mathcal D, \\
  y(0,x) &= \xi(x) & &\text{in } \mathcal D,
 \end{aligned}
 \right.
\end{equation}
where $y(t,x) \in \mathbb R^n$ denotes the velocity field with initial value $\xi(x) \in \mathbb R^n$, $p(t,x) \in \mathbb R$ describes the pressure of the fluid, and $f(t,x) \in \mathbb R^n$ is the external force.
The boundary condition $g(t,x) \in \mathbb R^n$ is assumed to be tangential, i.e.
\begin{equation*}
 g(t,x) \cdot \eta(x) = 0 \quad \text{on } (0,T) \times \partial \mathcal D,
\end{equation*}
where $\eta$ denotes the unit outward normal to $\partial \mathcal D$.
The goal is to reformulate system (\ref{detstokes}) as an evolution equation.
We define the following spaces for $s \geq 0$:
\begin{align*}
 V^s(\mathcal D) &= \left\{ y \in \left(H^s (\mathcal D)\right)^n \colon \text{div }y = 0 \text{ in } \mathcal D, y \cdot \eta = 0 \text{ on } \partial \mathcal D \right\}, \\
 V^s(\partial \mathcal D) &= \left\{ y \in \left(H^s (\partial \mathcal D)\right)^n \colon y \cdot \eta = 0 \text{ on } \partial \mathcal D \right\}.
\end{align*}
For $s < 0$, the space $V^s(\partial \mathcal D)$ is the dual space of $V^{-s}(\partial \mathcal D)$ with $V^0(\partial \mathcal D)$ as pivot space.
Moreover, let $H^s(\mathcal D)/\mathbb R$ with $s \geq 0$ be the quotient space of $H^s(\mathcal D)$ by $\mathbb R$, i.e. $H^s(\mathcal D)/\mathbb R = \{ y + c \colon y \in H^s(\mathcal D), c \in \mathbb R\}$.
We set $\|y\|_{H^s(\mathcal D) / \mathbb R} = \inf_{c \in \mathbb R} \| y + c\|_{H^s(\mathcal D)}$ for every $y \in H^s(\mathcal D)/\mathbb R$.
The dual space is denoted by $(H^s(\mathcal D)/\mathbb R)'$ with $H^0(\mathcal D)/\mathbb R$ as pivot space.

Next, let us consider the system
\begin{equation} \label{dirichletequation}
 \left\{
 \begin{aligned}
  - \Delta w + \nabla \pi &= 0 \quad \text{and} \quad \text{div }w = 0& &\text{in } \mathcal D, \\
  w &= g& &\text{on } \partial \mathcal D.
 \end{aligned}
 \right.
\end{equation}
We have the following existence and uniqueness results.

\begin{proposition}[cf. Theorem IV.6.1,{\cite{aitt}}]\label{existence1}
 If $g \in V^{3/2}(\partial \mathcal D)$, then there exists a unique solution $(w,\pi) \in V^2(\mathcal D) \times H^1(\mathcal D) / \mathbb R$ of system (\ref{dirichletequation}) and the following estimate holds:
 \begin{equation*}
  \|w\|_{V^2(\mathcal D)} + \|\pi\|_{H^1(\mathcal D) / \mathbb R} \leq C^* \|g\|_{V^{3/2}(\partial \mathcal D)},
 \end{equation*}
 where $C^*>0$ is a constant.
\end{proposition}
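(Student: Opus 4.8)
The plan is to reduce the nonhomogeneous problem (\ref{dirichletequation}) to a stationary Stokes problem with homogeneous Dirichlet data and then to invoke the classical existence, uniqueness and $H^2\times H^1$-regularity theory for the latter; this is in essence the argument behind Theorem IV.6.1 in \cite{aitt}.

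First I would construct a solenoidal lifting of the boundary datum: a field $G\in V^2(\mathcal D)$, i.e. $G\in\left(H^2(\mathcal D)\right)^n$ with $\text{div}\,G=0$ in $\mathcal D$ and $G=g$ on $\partial\mathcal D$, satisfying $\|G\|_{V^2(\mathcal D)}\le C\|g\|_{V^{3/2}(\partial\mathcal D)}$ with $C=C(\mathcal D)$. The tangency hypothesis $g\cdot\eta=0$ on $\partial\mathcal D$ is precisely what makes this possible: it forces the flux condition $\int_{\partial\mathcal D}g\cdot\eta\,d\sigma=0$, so one may first lift $g$ to an $H^2$-field by the bounded trace operator and then subtract a correction produced by the Bogovskii operator, which maps $H^1(\mathcal D)/\mathbb R$ boundedly into $\left(H_0^2(\mathcal D)\right)^n$ on the bounded $C^2$-domain $\mathcal D$, to annihilate the divergence while preserving the boundary values. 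Writing $w=v+G$, it then remains to solve
\begin{equation*}
 -\Delta v+\nabla\pi=\Delta G,\qquad \text{div}\,v=0\quad\text{in }\mathcal D,\qquad v=0\quad\text{on }\partial\mathcal D,
\end{equation*}
where $\Delta G\in\left(L^2(\mathcal D)\right)^n$.

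For this homogeneous problem I would argue variationally on $V$: the Dirichlet form $a(v,\phi)=\int_{\mathcal D}\nabla v:\nabla\phi\,dx$ is bounded and, by the Poincaré inequality, coercive on $V$, so the Lax--Milgram lemma produces a unique $v\in V$ with $\|v\|_V\le C\|\Delta G\|_{(L^2(\mathcal D))^n}$. The pressure is recovered by de Rham's theorem: the linear functional $\phi\mapsto\langle\Delta G,\phi\rangle_{(L^2(\mathcal D))^n}-a(v,\phi)$ on $\left(H_0^1(\mathcal D)\right)^n$ vanishes on divergence-free test fields and hence equals $\langle\nabla\pi,\phi\rangle$ for some $\pi\in L^2(\mathcal D)/\mathbb R$. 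Since $\partial\mathcal D$ is $C^2$, Cattabriga's regularity theorem for the stationary Stokes system (an Agmon--Douglis--Nirenberg-type estimate) upgrades this weak solution to $(v,\pi)\in\left(H^2(\mathcal D)\right)^n\times\left(H^1(\mathcal D)/\mathbb R\right)$ with $\|v\|_{(H^2(\mathcal D))^n}+\|\pi\|_{H^1(\mathcal D)/\mathbb R}\le C\|\Delta G\|_{(L^2(\mathcal D))^n}$. Setting $w=v+G$ and combining with the lifting estimate yields $w\in V^2(\mathcal D)$ (note $w\cdot\eta=g\cdot\eta=0$ on $\partial\mathcal D$) together with the claimed bound $\|w\|_{V^2(\mathcal D)}+\|\pi\|_{H^1(\mathcal D)/\mathbb R}\le C^*\|g\|_{V^{3/2}(\partial\mathcal D)}$.

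Uniqueness is the easy step: the difference of two solutions solves (\ref{dirichletequation}) with $g=0$; testing the momentum equation with that velocity difference and integrating by parts eliminates the pressure and forces its gradient to vanish, so the velocity difference is constant and hence zero by the homogeneous boundary condition, which in turn gives $\nabla\pi=0$, i.e. $\pi=0$ in $H^1(\mathcal D)/\mathbb R$. The main obstacle is really the first step --- producing the $H^2$-solenoidal extension of $g$ with a sharp norm bound whose constant depends only on $\mathcal D$ --- together with the $H^2\times H^1$ Stokes regularity, both of which rely on the $C^2$-smoothness of $\partial\mathcal D$; one may of course short-circuit the whole argument by quoting the nonhomogeneous Stokes estimates of \cite{aitt} directly, as the statement indicates.
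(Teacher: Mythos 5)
The paper offers no proof of this proposition: it is quoted, up to notation, from Theorem IV.6.1 of \cite{aitt}, so there is no in-paper argument to compare against. Your outline --- a solenoidal $H^2$-lifting of $g$, reduction to the homogeneous stationary Stokes problem, Lax--Milgram on $V$ together with de Rham's theorem for the pressure, and Cattabriga/Agmon--Douglis--Nirenberg regularity to reach $H^2\times H^1$ --- is exactly the standard route behind that theorem, and your uniqueness argument is correct (connectedness of $\mathcal D$, which the paper assumes, is what lets you pass from $\nabla\pi=0$ to $\pi=0$ in $H^1(\mathcal D)/\mathbb R$).

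The one step that does not work as written is the construction of the lifting. The Bogovskii operator $\mathcal B$ does \emph{not} map mean-zero $H^1(\mathcal D)$ boundedly into $\left(H^2(\mathcal D)\right)^n\cap\left(H^1_0(\mathcal D)\right)^n$: the higher-order estimate $\|\mathcal B f\|_{H^{k+1}}\leq C\|f\|_{H^k}$ is proved by density from $C_0^\infty$ and therefore requires $f\in H^k_0(\mathcal D)$, i.e.\ for $k=1$ it requires $f$ to vanish on $\partial\mathcal D$. For a generic trace lifting $\tilde G$ of $g$, $\text{div}\,\tilde G$ has no reason to vanish on the boundary (it contains the tangential divergence of $g$ plus a normal-derivative term), so your correction lands only in $H^1$ and the resulting $w$ would fail to be in $V^2(\mathcal D)$. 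The repair is standard: the map $u\mapsto\left(u|_{\partial\mathcal D},\partial_\eta u|_{\partial\mathcal D}\right)$ from $\left(H^2(\mathcal D)\right)^n$ onto $\left(H^{3/2}(\partial\mathcal D)\right)^n\times\left(H^{1/2}(\partial\mathcal D)\right)^n$ admits a bounded right inverse, so you may prescribe the normal derivative of the lifting so that $\text{div}\,\tilde G=0$ on $\partial\mathcal D$; then $\text{div}\,\tilde G$ lies in $H^1_0(\mathcal D)$ and has zero mean (here the tangency $g\cdot\eta=0$ supplies the flux condition), and Bogovskii applies at the $H^1_0\rightarrow H^2$ level. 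Alternatively, quote the solenoidal-extension lemmas of \cite{aitt} directly, or invoke the Cattabriga estimates for the nonhomogeneous boundary value problem, which are proved by localization without any lifting. With that correction your argument is complete.
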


\begin{proposition}[cf. \cite{acos,sans}]\label{existence2}
 If $g \in V^{-1/2}(\partial \mathcal D)$, then there exists a unique solution $(w,\pi) \in V^0(\mathcal D) \times \left(H^1(\mathcal D) / \mathbb R \right)'$ of system (\ref{dirichletequation}) and the following estimate holds:
 \begin{equation*}
  \|w\|_{V^0(\mathcal D)} + \|\pi\|_{(H^1(\mathcal D) / \mathbb R)'} \leq C^* \|g\|_{V^{3/2}(\partial \mathcal D)},
 \end{equation*}
 where $C^*>0$ is a constant.
\end{proposition}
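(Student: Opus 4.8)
The plan is to build $(w,\pi)$ by the transposition (very weak solution) method. Proposition~\ref{existence1} already supplies the linear solution operator $g \mapsto (w,\pi)$ on $V^{3/2}(\partial\mathcal D)$, which is dense in $V^{-1/2}(\partial\mathcal D)$, so it suffices to prove the a priori bound $\|w\|_{V^0(\mathcal D)} + \|\pi\|_{(H^1(\mathcal D)/\mathbb R)'} \le C\,\|g\|_{V^{-1/2}(\partial\mathcal D)}$ for smooth $g$ and then extend by density. To obtain this bound I would, for each $\phi \in (L^2(\mathcal D))^n$, introduce the auxiliary Stokes system
\begin{equation*}
 -\Delta v + \nabla q = \phi, \qquad \operatorname{div} v = 0 \ \text{ in } \mathcal D, \qquad v = 0 \ \text{ on } \partial \mathcal D,
\end{equation*}
whose divergence-free velocity is $v = A^{-1}\Pi \phi \in D(A) \subset (H^2(\mathcal D))^n \cap V$, with an associated pressure $q \in H^1(\mathcal D)/\mathbb R$; by the regularity of the Stokes operator, $\|v\|_{(H^2(\mathcal D))^n} + \|q\|_{H^1(\mathcal D)/\mathbb R} \le C\,\|\phi\|_{(L^2(\mathcal D))^n}$.

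For $g \in V^{3/2}(\partial\mathcal D)$ with solution $(w,\pi) \in V^2(\mathcal D) \times H^1(\mathcal D)/\mathbb R$, applying Green's formula twice and using $\operatorname{div} w = 0$, the tangency $g \cdot \eta = 0$, the condition $v = 0$ on $\partial\mathcal D$, and the identity $\int_{\mathcal D} \nabla w : \nabla v \, dx = 0$ (a consequence of $-\Delta w = -\nabla \pi$ and $\operatorname{div} v = 0$), every interior and pressure term cancels, leaving
\begin{equation*}
 \langle w,\phi \rangle_{(L^2(\mathcal D))^n} = -\left\langle g, \frac{\partial v}{\partial \eta} \right\rangle_{V^{-1/2}(\partial\mathcal D),\, V^{1/2}(\partial\mathcal D)}.
\end{equation*}
Since $\|\partial v / \partial \eta\|_{(H^{1/2}(\partial\mathcal D))^n} \le C\,\|v\|_{(H^2(\mathcal D))^n} \le C\,\|\phi\|_{(L^2(\mathcal D))^n}$, taking the supremum over $\|\phi\|_{(L^2(\mathcal D))^n} \le 1$ gives $\|w\|_{V^0(\mathcal D)} \le C\,\|g\|_{V^{-1/2}(\partial\mathcal D)}$; the special choice $\phi = \nabla r$ forces $v = 0$, hence $\langle w, \nabla r\rangle = 0$ for all $r \in H^1(\mathcal D)$, so indeed $\operatorname{div} w = 0$ and $w\cdot\eta = 0$, i.e. $w \in V^0(\mathcal D)$. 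For the pressure I would use $\nabla\pi = \Delta w$ together with the Bogovskii operator: for $\psi \in H^1(\mathcal D)$ of zero mean, pick $v_\psi \in (H^2(\mathcal D))^n \cap (H^1_0(\mathcal D))^n$ with $\operatorname{div} v_\psi = \psi$ and $\|v_\psi\|_{(H^2(\mathcal D))^n} \le C\,\|\psi\|_{H^1(\mathcal D)}$; integrating by parts twice gives $\langle \pi,\psi\rangle = -\langle w,\Delta v_\psi\rangle_{(L^2(\mathcal D))^n} + \langle g, \partial v_\psi/\partial\eta\rangle_{\partial\mathcal D}$, whence $\|\pi\|_{(H^1(\mathcal D)/\mathbb R)'} \le C\big(\|w\|_{V^0(\mathcal D)} + \|g\|_{V^{-1/2}(\partial\mathcal D)}\big) \le C\,\|g\|_{V^{-1/2}(\partial\mathcal D)}$.

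With the estimate in hand, the solution operator extends uniquely to a bounded linear map $V^{-1/2}(\partial\mathcal D) \to V^0(\mathcal D) \times (H^1(\mathcal D)/\mathbb R)'$; its value at $g$ is the sought very weak solution, and passing to the limit in the transposition identities above shows it solves (\ref{dirichletequation}) in that sense. Uniqueness is then immediate: a very weak solution with $g = 0$ satisfies $\langle w,\phi\rangle = 0$ for all $\phi \in (L^2(\mathcal D))^n$, so $w = 0$, and the analogous identity for the pressure (or $\nabla\pi = \Delta w = 0$ together with $\pi \in (H^1(\mathcal D)/\mathbb R)'$) then yields $\pi = 0$. The main obstacle I expect is the careful bookkeeping in the two Green's formulas together with the trace and normal-derivative mapping properties --- in particular, exploiting the tangency of $g$ so that only the $(H^{1/2}(\partial\mathcal D))^n$-regular normal derivatives of the auxiliary velocities need to be controlled --- and verifying that the auxiliary Stokes system and the divergence equation really supply the claimed $(H^2(\mathcal D))^n$-regularity on the $C^2$ domain $\mathcal D$.
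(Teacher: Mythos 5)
The paper does not prove this proposition; it is quoted from the literature (the references \cite{acos,sans}), and those sources establish it precisely by the transposition (very weak solution) method you outline, so your proposal is essentially the standard argument rather than a genuinely different route. Your duality identity checks out: pairing $w$ with $\phi$ via the adjoint homogeneous Stokes problem, the pressure term of the adjoint problem drops because $\operatorname{div} w=0$ and $w\cdot\eta=g\cdot\eta=0$, the term $\int_{\mathcal D}\nabla w:\nabla v\,dx$ transfers onto $-\nabla\pi$ and vanishes because $v=0$ on $\partial\mathcal D$ and $\operatorname{div} v=0$, and one is left with $\langle w,\phi\rangle=-\langle g,\partial v/\partial\eta\rangle$. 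One point worth making explicit: for the pairing to live in $V^{-1/2}(\partial\mathcal D)\times V^{1/2}(\partial\mathcal D)$ you need $\partial v/\partial\eta$ to be tangential, and this is automatic since $v=0$ on $\partial\mathcal D$ and $\operatorname{div}v=0$ force $(\partial v/\partial\eta)\cdot\eta=\operatorname{div}v|_{\partial\mathcal D}=0$; otherwise one must insert the tangential projection. Also note that the estimate as printed in the statement has $\|g\|_{V^{3/2}(\partial\mathcal D)}$ on the right-hand side, which is evidently a typo for $\|g\|_{V^{-1/2}(\partial\mathcal D)}$ (otherwise the bound is vacuous for $g\notin V^{3/2}$); your argument correctly proves the intended inequality. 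The only step I would flag as needing a citation rather than being routine is the right inverse of the divergence mapping zero-mean $H^1(\mathcal D)$ into $(H^2(\mathcal D))^n\cap(H^1_0(\mathcal D))^n$ used for the pressure bound — the standard Bogovskii statement is for data in $H^1_0$, and the extension to general $H^1$ data on a smooth domain, while true, deserves a reference or a short detour (e.g., estimating $\pi$ instead through Ne\v{c}as' inequality from $\nabla\pi=\Delta w$).
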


We introduce the Dirichlet operators $D$ and $D_p$ defined by
\begin{equation*}
 D g = w \quad \text{and} \quad D_p g = \pi,
\end{equation*}
where $(w,\pi)$ is the solution of system (\ref{dirichletequation}).
We get the following properties of the Dirichlet operators, which is an immediate consequence of Proposition \ref{existence1} and Proposition \ref{existence2}.

\begin{corollary}[cf. Corollary A.1, {\cite{sans}}]\label{dirichletoperatorregularity}
 The operator $D$ is linear and continuous from $V^s(\partial \mathcal D)$ into $V^{s+1/2}(\mathcal D)$ for all $-\frac{1}{2} \leq s \leq \frac{3}{2}$.
 If $-\frac{1}{2} \leq s < \frac{1}{2}$, then the operator $D_p$ is linear and continuous from $V^s(\partial \mathcal D)$ into $\left(H^{1/2-s}(\mathcal D) / \mathbb R \right)'$, and if $\frac{1}{2} \leq s \leq \frac{3}{2}$, then the operator $D_p$ is linear and continuous from $V^s(\partial \mathcal D)$ into $H^{s-1/2}(\mathcal D) / \mathbb R$.
\end{corollary}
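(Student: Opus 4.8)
The plan is to obtain the corollary by complex interpolation between the two endpoint regularity results already recorded. First I would note that linearity of $D$ and $D_p$ is free: system (\ref{dirichletequation}) is linear and, by Proposition \ref{existence1} and Proposition \ref{existence2}, uniquely solvable for $g\in V^{3/2}(\partial\mathcal D)$ and for $g\in V^{-1/2}(\partial\mathcal D)$, so $g\mapsto w=Dg$ and $g\mapsto\pi=D_pg$ are well defined and linear, and the two estimates give boundedness of $D$ and $D_p$ at the endpoints $s=-\tfrac12$ and $s=\tfrac32$.

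Next I would set up the interpolation scales. Write $s=-\tfrac12+2\theta$ with $\theta\in[0,1]$, so that $s$ sweeps out $[-\tfrac12,\tfrac32]$. For the boundary data one has $[V^{-1/2}(\partial\mathcal D),V^{3/2}(\partial\mathcal D)]_\theta=V^{s}(\partial\mathcal D)$, and on the domain side $[V^0(\mathcal D),V^2(\mathcal D)]_\theta=V^{s+1/2}(\mathcal D)$. Applying the interpolation theorem to the bounded linear map $D$ between these two pairs immediately yields the continuity of $D\colon V^s(\partial\mathcal D)\to V^{s+1/2}(\mathcal D)$ for all $-\tfrac12\le s\le\tfrac32$.

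For $D_p$ I would interpolate the same $\theta$-family but now with the pressure spaces. Using duality for the quotient scale, $[(H^1(\mathcal D)/\mathbb R)',\,H^1(\mathcal D)/\mathbb R]_\theta$ equals $H^{2\theta-1}(\mathcal D)/\mathbb R=H^{s-1/2}(\mathcal D)/\mathbb R$ for $\theta\ge\tfrac12$ and equals $(H^{1-2\theta}(\mathcal D)/\mathbb R)'=(H^{1/2-s}(\mathcal D)/\mathbb R)'$ for $\theta\le\tfrac12$, the two descriptions agreeing at $\theta=\tfrac12$ where both reduce to $H^0(\mathcal D)/\mathbb R$. Interpolating the endpoint bounds for $D_p$ from Proposition \ref{existence1} and Proposition \ref{existence2} then gives exactly the two claimed statements, on $-\tfrac12\le s<\tfrac12$ and on $\tfrac12\le s\le\tfrac32$ respectively.

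The main obstacle, and the only point requiring care, is justifying the identification of these interpolation spaces for the constrained and quotient scales — i.e.\ that imposing $\operatorname{div}w=0$ and $w\cdot\eta=0$ carves out a complemented subspace compatible with the interpolation functor, and that the quotient by constants (and passage to its dual) commutes with interpolation, so that the negative-order spaces $(H^\sigma(\mathcal D)/\mathbb R)'$ genuinely belong to the same scale. These are standard facts about the Sobolev scales on $\mathcal D$ and $\partial\mathcal D$ and are precisely what is worked out in the appendix of \cite{sans}; granting them, the corollary follows at once from the abstract complex interpolation theorem applied to $D$ and $D_p$.
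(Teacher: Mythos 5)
Your proposal is correct and is essentially the argument the paper intends: the corollary is stated as an immediate consequence of Proposition \ref{existence1} and Proposition \ref{existence2} with a citation to Corollary A.1 of \cite{sans}, and the only way to pass from the two endpoint cases $s=-\tfrac12$ and $s=\tfrac32$ to the full range is exactly the complex interpolation you describe. You also correctly isolate the one nontrivial ingredient — the identification of the interpolation spaces for the divergence-free/tangential scales and for the quotient-by-constants scale and its dual — which is precisely what the cited appendix supplies.
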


As a consequence of Proposition \ref{domainfractional1}, Proposition \ref{domainfractional2} and Corollary \ref{dirichletoperatorregularity}, we get $D \in \mathcal L \left(V^0(\partial \mathcal D);D(A^\beta)\right)$ for $\beta \in \left(0,\frac{1}{4}\right)$.
By the closed graph theorem, we have $A^\beta D \in \mathcal L \left(V^0(\partial \mathcal D);V^0(\mathcal D)\right)$.
Note that $V^0(\mathcal D) = H$.
Furthermore, system (\ref{detstokes}) can be rewritten in the following form:
\begin{equation}\label{abstractstokes}
 \left\{
 \begin{aligned}
  \frac{d}{d t} y(t) &= - A y(t) + A D g(t) + \Pi f(t), \\
  y(0) &= \Pi \xi,
 \end{aligned}
 \right.
\end{equation}
where the operators $A$ and $\Pi$ are introduced in Section \ref{sec:functionalbackground}.
For the sake of simplicity, we assume $f(t),\xi \in H$ for $t \in [0,T]$.
Hence, we obtain a linear evolution equation and the solution is given by
\begin{equation*}
 y(t) = e^{-A t} \xi + \int\limits_0^t A e^{-A (t-s)} D g(s) ds + \int\limits_0^t e^{-A (t-s)} f(s) ds.
\end{equation*}
For more details about linear evolution equations, see \cite{raco}.
The following existence and uniqueness result is stated in \cite{sans} for more general boundary conditions and $f=0$.

\begin{theorem}
 Let $g \in L^2([0,T];V^0(\partial \mathcal D))$ and $f \in L^2([0,T];H)$.
 If $\alpha \in [0,\frac{1}{4})$, then for any $\xi \in D(A^\alpha)$, there exists a unique solution $y \in L^2([0,T];D(A^\alpha))$ of system (\ref{abstractstokes}) and the following estimate holds:
 \begin{equation*}
  \|y\|_{L^2([0,T];D(A^\alpha))} \leq C^* \left(\|\xi\|_{D(A^\alpha)} + \|g\|_{L^2([0,T];V^0(\partial \mathcal D))} + \|f\|_{L^2([0,T];H)}\right),
 \end{equation*}
 where $C^*>0$ is a constant.
\end{theorem}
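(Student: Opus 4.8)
The plan is to establish the three ingredients separately: the semigroup term, the boundary control term, and the distributed term, and then combine them by the triangle inequality. First I would handle the distributed force term $\int_0^t e^{-A(t-s)} f(s)\,ds$. Since $\|A^\alpha e^{-A(t-s)}\|_{\mathcal L(H)} \leq M_\alpha (t-s)^{-\alpha} e^{-\theta(t-s)}$ by Lemma \ref{fractional}~(iv) with $\alpha < \tfrac14 < 1$, the map $s \mapsto A^\alpha e^{-A(t-s)} f(s)$ is integrable, and applying Young's convolution inequality to the kernel $t \mapsto M_\alpha t^{-\alpha} e^{-\theta t} \in L^1([0,T])$ against $f \in L^2([0,T];H)$ yields
\begin{equation*}
 \left\| \int\limits_0^{\cdot} A^\alpha e^{-A(\cdot-s)} f(s)\,ds \right\|_{L^2([0,T];H)} \leq \|M_\alpha t^{-\alpha} e^{-\theta t}\|_{L^1([0,T])} \, \|f\|_{L^2([0,T];H)}.
\end{equation*}
For the semigroup term $e^{-At}\xi$ with $\xi \in D(A^\alpha)$, Lemma \ref{fractional}~(iii) gives $A^\alpha e^{-At}\xi = e^{-At} A^\alpha \xi$, and since $\|e^{-At}\|_{\mathcal L(H)} \leq 1$ we get $\|A^\alpha e^{-At}\xi\|_H \leq \|A^\alpha \xi\|_H = \|\xi\|_{D(A^\alpha)}$ pointwise, hence after integrating over $[0,T]$ a bound of the form $\sqrt{T}\,\|\xi\|_{D(A^\alpha)}$.

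The main work is the boundary control term $z(t) = \int_0^t A e^{-A(t-s)} D g(s)\,ds$. Here I would exploit the extra smoothing coming from $D \in \mathcal L(V^0(\partial\mathcal D); D(A^\beta))$ for $\beta \in (0,\tfrac14)$, a fact already recorded in the excerpt as a consequence of Propositions \ref{domainfractional1} and \ref{domainfractional2} and Corollary \ref{dirichletoperatorregularity}. Choose $\beta \in (\alpha, \tfrac14)$; then write $A e^{-A(t-s)} D g(s) = A^{1-\beta+\alpha} e^{-A(t-s)} \cdot A^{\beta} D g(s)$ up to commutation, so that
\begin{equation*}
 A^\alpha z(t) = \int\limits_0^t A^{1-\beta+\alpha} e^{-A(t-s)} \left(A^\beta D\right) g(s)\,ds,
\end{equation*}
using Lemma \ref{fractional}~(i),(iii) to move the fractional powers through the semigroup. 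Since $1-\beta+\alpha < 1$, Lemma \ref{fractional}~(iv) gives $\|A^{1-\beta+\alpha} e^{-A(t-s)}\|_{\mathcal L(H)} \leq M_{1-\beta+\alpha}(t-s)^{-(1-\beta+\alpha)} e^{-\theta(t-s)}$, and the exponent $1-\beta+\alpha < 1$ keeps this kernel in $L^1([0,T])$ — this is the crucial point where $\alpha < \tfrac14$ is used, because we need room to pick $\beta$ strictly between $\alpha$ and $\tfrac14$. Then, since $A^\beta D \in \mathcal L(V^0(\partial\mathcal D); H)$ by the closed graph theorem, $s \mapsto (A^\beta D) g(s)$ lies in $L^2([0,T];H)$ with norm controlled by $\|g\|_{L^2([0,T];V^0(\partial\mathcal D))}$, and Young's inequality again gives
\begin{equation*}
 \|A^\alpha z\|_{L^2([0,T];H)} \leq \|M_{1-\beta+\alpha} t^{-(1-\beta+\alpha)} e^{-\theta t}\|_{L^1([0,T])} \, \|A^\beta D\|_{\mathcal L(V^0(\partial\mathcal D);H)} \, \|g\|_{L^2([0,T];V^0(\partial\mathcal D))}.
\end{equation*}

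Summing the three estimates and setting $C^*$ to be the maximum of the resulting constants gives the claimed bound, and in particular shows $y \in L^2([0,T];D(A^\alpha))$. Uniqueness follows from linearity: the difference of two solutions solves \eqref{abstractstokes} with $\xi = 0$, $g = 0$, $f = 0$, whose mild representation is identically zero; alternatively one appeals to the standard uniqueness theory for linear evolution equations as cited. I would also remark that $e^{-At}\xi \in D(A^\alpha)$ for a.e.\ $t$ is automatic from Lemma \ref{fractional}~(ii), and that the integrals defining $z(t)$ and the distributed term converge in $D(A^\alpha)$ for each $t$ by the pointwise kernel bounds together with Hölder, so the pointwise expression for $y(t)$ is well defined before one even passes to the $L^2$-in-time norm. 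The only genuine subtlety is the bookkeeping of fractional exponents in the boundary term — one must verify $0 \le 1-\beta+\alpha < 1$ and that all the domains line up so that Lemma \ref{fractional}~(i) applies — but this is exactly arranged by the constraint $\alpha \in [0,\tfrac14)$ and the choice $\beta \in (\alpha,\tfrac14)$.
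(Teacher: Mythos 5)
Your argument is correct, and it is essentially the approach the paper itself relies on: the paper states this theorem without proof (citing the deterministic reference for the $f=0$ case), but the three estimates you give — the $\sqrt{T}\,\|\xi\|_{D(A^\alpha)}$ bound via Lemma \ref{fractional}~(iii), the $L^1$-kernel convolution bound for the $f$-term, and the splitting $A e^{-A(t-s)}Dg(s) = A^{1+\alpha-\beta}e^{-A(t-s)}A^\beta Dg(s)$ with $\beta\in(\alpha,\tfrac14)$ followed by Young's inequality — are precisely the deterministic counterparts of the $\psi_1$ and $\psi_2$ estimates in the paper's proof of Theorem \ref{existencestochstokes}. Nothing further is needed.
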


\subsection{Stochastic Processes and the Stochastic Integral}

In this section, we give a brief introduction to stochastic integrals, where the noise term is defined as a Hilbert space valued Wiener process.
For more details, see \cite{seiid}.

Let $(\Omega,\mathcal{F}, \mathbb{P})$ be a complete probability space endowed with a filtration $(\mathcal{F}_t)_{t \in [0,T]}$ satisfying $\mathcal F_t = \bigcap_{s > t} \mathcal F_s$ for all $t \in [0,T]$ and let $E$ be a separable Hilbert space.
We denote by $\mathcal L(E)$ the space of linear and bounded operators defined on $E$.
Let $Q \in \mathcal L(E)$ be a symmetric and nonnegative semidefinite operator such that $\text{Tr } Q < \infty$. 
Then we have the following definition.

\begin{definition}[Definition 4.2,\cite{seiid}]
An $E$-valued stochastic process $(W(t))_{t \in [0,T]}$ is called a \textbf{Q-Wiener process} if
\begin{itemize}
 \item $W(0) = 0$;
 \item $(W(t))_{t \in [0,T]}$ has continuous trajectories;
 \item $(W(t))_{t \in [0,T]}$ has independent increments;
 \item the distribution of $W(t) - W(s)$ is a Gaussian measure with mean 0 and covariance $(t-s)Q$ for $0 \leq s \leq t \leq T$.
\end{itemize}
\end{definition}

Next, we give a definition of $\mathcal F_t$-adapted processes and predictable processes, which are important to construct the stochastic integral.
Let $\mathcal P$ denote the smallest $\sigma$-field of subsets of $[0,T] \times \Omega$.

\begin{definition}[\cite{seiid}]
 A stochastic process $(X(t))_{t \in [0,T]}$ taking values in the measurable space $(\mathcal X, \mathscr B(\mathcal X))$ is called $\mathcal F_t$-adapted if for arbitrary $t \in [0,T]$ the random variable $X(t)$ is $\mathcal F_t$-measurable.
 We call $(X(t))_{t \in [0,T]}$ predictable if it is a measurable mapping from $([0,T] \times \Omega,\mathcal P)$ to $(\mathcal X, \mathscr B(\mathcal X))$.
\end{definition}

Every predictable stochastic process is $\mathcal F_t$-adapted.
The converse is in general not true. 
However, the following result is useful to conclude that a stochastic process has a predictable version.

\begin{lemma}[Proposition 3.7,\cite{seiid}] \label{predictable}
 Assume that the stochastic process $(X(t))_{t \in [0,T]}$ is $\mathcal F_t$-adapted and stochastically continuous.
 Then the process $(X(t))_{t \in [0,T]}$ has a predictable version.
\end{lemma}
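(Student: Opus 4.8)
The plan is to realize $X$ as an almost‑everywhere limit of predictable step processes. For each $n\in\mathbb N$ I would use the partition $t_j^n = jT/2^n$, $j=0,1,\dots,2^n$, of $[0,T]$ and set
\[
 X_n(t) = X(0)\,\mathds{1}_{\{0\}}(t) + \sum_{j=0}^{2^n-1} X(t_j^n)\,\mathds{1}_{(t_j^n,\,t_{j+1}^n]}(t),\qquad t\in[0,T].
\]
Each $X_n$ has left‑continuous sample paths and is $\mathcal F_t$-adapted, since on $(t_j^n,t_{j+1}^n]$ it equals $X(t_j^n)$, which is $\mathcal F_{t_j^n}$-measurable and $t_j^n\le t$ there; equivalently, $X_n$ is a finite sum of the $\mathcal P$-measurable functions $(t,\omega)\mapsto X(t_j^n)(\omega)\,\mathds{1}_{(t_j^n,\,t_{j+1}^n]}(t)$, recalling that the predictable rectangles $(s,t]\times F$ with $F\in\mathcal F_s$ generate $\mathcal P$. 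Hence every $X_n$ is predictable.

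The second step upgrades stochastic continuity on the compact interval $[0,T]$ to uniform stochastic continuity: the map $t\mapsto X(t)$ is continuous, hence uniformly continuous, from $[0,T]$ into the space of $E$-valued random variables equipped with the metric of convergence in probability. Consequently, for every $\varepsilon>0$,
\[
 \delta_n(\varepsilon):=\sup_{\substack{s,t\in[0,T]\\|s-t|\le T/2^n}}\mathbb P\bigl(\|X(s)-X(t)\|_E>\varepsilon\bigr)\longrightarrow 0\quad\text{as }n\to\infty .
\]
Since for $t\in(t_j^n,t_{j+1}^n]$ one has $\|X_n(t)-X(t)\|_E=\|X(t_j^n)-X(t)\|_E$ with $|t-t_j^n|\le T/2^n$, it follows that $\mathbb P(\|X_n(t)-X(t)\|_E>\varepsilon)\le\delta_n(\varepsilon)$ for all $t\in[0,T]$, and integrating in $t$ gives
\[
 \int_0^T\mathbb P\bigl(\|X_n(t)-X(t)\|_E>\varepsilon\bigr)\,dt\le T\,\delta_n(\varepsilon)\to 0 .
\]
Thus $X_n\to X$ in measure on $\bigl([0,T]\times\Omega,\ \mathscr B([0,T])\otimes\mathcal F,\ dt\otimes\mathbb P\bigr)$.

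Finally I would pass to the limit in the product space. As $(X_n)$ is Cauchy in $dt\otimes\mathbb P$-measure, I can extract a subsequence $(n_k)$ with $\int_0^T\mathbb P(\|X_{n_{k+1}}(t)-X_{n_k}(t)\|_E>2^{-k})\,dt<2^{-k}$ for all $k$; by the Borel–Cantelli lemma applied on $[0,T]\times\Omega$, for $dt\otimes\mathbb P$-almost every $(t,\omega)$ the sequence $(X_{n_k}(t,\omega))_k$ is Cauchy in $E$ and hence convergent. Let $\tilde X(t,\omega)$ be this limit on the $\mathcal P$-measurable set where it exists and $0$ elsewhere. Being a pointwise limit of the predictable processes $X_{n_k}$, the process $\tilde X$ is predictable; and since a further subsequence of $(X_{n_k})$ converges to $X$ $dt\otimes\mathbb P$-a.e. while $(X_{n_k})$ itself converges to $\tilde X$ $dt\otimes\mathbb P$-a.e., we obtain $\tilde X=X$ for $dt\otimes\mathbb P$-almost all $(t,\omega)$, so that by Fubini's theorem $\mathbb P(\tilde X(t)=X(t))=1$ for almost every $t\in[0,T]$. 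Hence $\tilde X$ is the desired predictable version of $X$. I expect this last transition to be the main obstacle: convergence in probability at each fixed $t$ by itself does not produce a single jointly measurable process representing the limit, so one must first secure convergence with respect to the product measure $dt\otimes\mathbb P$ — which is exactly where compactness of $[0,T]$ and the resulting uniform stochastic continuity are essential — and then invoke Borel–Cantelli on $[0,T]\times\Omega$, keeping careful track of the exceptional null sets so that $\tilde X$ is genuinely $\mathcal P$-measurable; strengthening the conclusion to equality for every $t\in[0,T]$ rather than for almost every $t$ would require an extra modification argument, not needed for the application to stochastic integrals.
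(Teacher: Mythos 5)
The paper does not prove this lemma at all; it is imported verbatim from Da Prato--Zabczyk \cite{seiid} (Proposition 3.7), and your argument is precisely the standard proof of that cited result: left-continuous dyadic step approximations (predictable because they are built from the generating rectangles $(s,t]\times F$ with $F\in\mathcal F_s$), uniform stochastic continuity on the compact interval $[0,T]$ to upgrade pointwise convergence in probability to convergence in $dt\otimes\mathbb P$-measure, and a Borel--Cantelli subsequence whose $dt\otimes\mathbb P$-a.e.\ pointwise limit furnishes the predictable process. The one caveat you flag yourself --- that the construction yields $\tilde X(t)=X(t)$ $\mathbb P$-a.s.\ only for almost every $t$ rather than for every $t$ --- is shared by the textbook proof and is harmless in this paper, where the lemma only serves to replace adapted integrands by predictable ones in stochastic integrals, for which equality up to $dt\otimes\mathbb P$-null sets suffices.
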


Let $Q \in \mathcal L(E)$ be the covariance operator of a Q-Wiener process $(W(t))_{t \in [0,T]}$ with values in $E$.
Then there exists a unique operator $Q^{1/2} \in \mathcal L(E)$ such that $Q^{1/2} \circ Q^{1/2} = Q$.
We denote by $\mathcal{L}_{(HS)}(Q^{1/2}(E);\mathcal H)$ the space of Hilbert-Schmidt operators mapping from $Q^{1/2}(E)$ into another Hilbert space $\mathcal H$.
Let $(\Phi(t))_{t\in [0,T]}$ be a predictable process with values in $\mathcal{L}_{(HS)}(Q^{1/2}(E);\mathcal H)$ such that $ \mathbb E \int_0^T \left\| \Phi(t) \right\|_{\mathcal{L}_{(HS)}(Q^{1/2}(E);\mathcal H)}^2 dt < \infty$.
Then one can define the stochastic integral 
\begin{equation*}
 \psi(t) = \int\limits_0^t \Phi(s) d W(s)
\end{equation*}
for all $t \in [0,T]$ and we have
\begin{equation}\label{isometry}
 \mathbb E \left\| \psi(t)\right\|_{\mathcal H}^2 = \mathbb E \int\limits_0^t \left\| \Phi(s) \right\|_{\mathcal{L}_{(HS)}(Q^{1/2}(E);\mathcal H)}^2 ds.
\end{equation}

The following proposition is useful when dealing with a closed operator $\mathcal A \colon D(\mathcal A) \subset \mathcal H \rightarrow \mathcal H$.

\begin{proposition}[cf. Proposition 4.15,{\cite{seiid}}]\label{closedopstochint}
 If $\Phi(t)y \in D(\mathcal A)$ for every $y \in E$, all $t \in [0,T]$ and $\mathbb P$-almost surely,
 \begin{equation*}
  \mathbb E \int\limits_0^T \left\| \Phi(t) \right\|_{\mathcal{L}_{(HS)}(Q^{1/2}(E);\mathcal H)}^2 dt < \infty \quad \text{and} \quad 
  \mathbb E \int\limits_0^T \left\| \mathcal A \Phi(t) \right\|_{\mathcal{L}_{(HS)}(Q^{1/2}(E);\mathcal H)}^2 dt < \infty,
 \end{equation*}
 then we have $\mathbb P$-a.s. $\int_0^T \Phi(t) d W(t) \in D(\mathcal A)$ and
 \begin{equation*}
  \mathcal A\int\limits_0^T \Phi(t) d W(t) = \int\limits_0^T \mathcal A\Phi(t) d W(t).
 \end{equation*}
\end{proposition}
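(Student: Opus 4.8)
The plan is to reduce the statement to the elementary fact that a \emph{bounded} linear operator commutes with the stochastic integral, by passing to the graph space of $\mathcal A$. Equip $D(\mathcal A)$ with the graph inner product $\langle y,z\rangle_{D(\mathcal A)}=\langle y,z\rangle_{\mathcal H}+\langle \mathcal A y,\mathcal A z\rangle_{\mathcal H}$; since $\mathcal A$ is closed, $(D(\mathcal A),\|\cdot\|_{D(\mathcal A)})$ is a separable Hilbert space, the inclusion $\iota\colon D(\mathcal A)\rightarrow\mathcal H$ is bounded, and $\mathcal A\colon D(\mathcal A)\rightarrow\mathcal H$ is bounded by construction. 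The hypothesis $\Phi(t)y\in D(\mathcal A)$ for every $y\in E$ lets us regard each $\Phi(t)$ as an operator into $D(\mathcal A)$, and summing over an orthonormal basis of $Q^{1/2}(E)$ gives $\|\Phi(t)\|_{\mathcal L_{(HS)}(Q^{1/2}(E);D(\mathcal A))}^2=\|\Phi(t)\|_{\mathcal L_{(HS)}(Q^{1/2}(E);\mathcal H)}^2+\|\mathcal A\Phi(t)\|_{\mathcal L_{(HS)}(Q^{1/2}(E);\mathcal H)}^2$. Together with the two integrability hypotheses this shows $\mathbb E\int_0^T\|\Phi(t)\|_{\mathcal L_{(HS)}(Q^{1/2}(E);D(\mathcal A))}^2\,dt<\infty$, so that, once predictability of $\Phi$ as a $D(\mathcal A)$-valued process is checked (it follows because $\mathcal A$ is continuous on $D(\mathcal A)$ in the graph norm, hence preserves measurability), $\int_0^T\Phi(t)\,dW(t)$ is a well-defined, square-integrable $D(\mathcal A)$-valued random variable. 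In particular $\int_0^T\Phi(t)\,dW(t)\in D(\mathcal A)$ $\mathbb P$-almost surely.

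It remains to identify the two integrals. Apply the commutation rule for bounded operators with the stochastic integral, $B\int_0^T\Psi\,dW=\int_0^T B\Psi\,dW$, which is immediate for simple integrands $\Psi$ and extends to all admissible $\Psi$ by the isometry (\ref{isometry}). Taking $B=\iota$ and $\Psi=\Phi$ shows that the $D(\mathcal A)$-valued integral agrees, under the inclusion, with the $\mathcal H$-valued one from the statement; taking $B=\mathcal A$ and $\Psi=\Phi$ (as a $D(\mathcal A)$-valued integrand) yields $\mathcal A\int_0^T\Phi(t)\,dW(t)=\int_0^T\mathcal A\Phi(t)\,dW(t)$, which is the asserted identity.

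If one prefers to avoid $D(\mathcal A)$-valued integration theory, the same conclusion follows by a direct approximation argument, and this is where the one genuinely delicate point lies. Choose simple predictable processes $\Phi_n$ with $\Phi_n\rightarrow\Phi$ and $\mathcal A\Phi_n\rightarrow\mathcal A\Phi$ \emph{simultaneously} in $L^2(\Omega\times[0,T];\mathcal L_{(HS)}(Q^{1/2}(E);\mathcal H))$; this simultaneous approximation requires revisiting the construction of the stochastic integral and is precisely the content of "$\Phi$ is integrable as a $D(\mathcal A)$-valued process," so the two routes coincide. For simple $\Phi_n$ the identity $\mathcal A\int_0^T\Phi_n\,dW=\int_0^T\mathcal A\Phi_n\,dW$ is trivial; by (\ref{isometry}) one has $\int_0^T\Phi_n\,dW\rightarrow\int_0^T\Phi\,dW$ and $\int_0^T\mathcal A\Phi_n\,dW\rightarrow\int_0^T\mathcal A\Phi\,dW$ in $L^2(\Omega;\mathcal H)$; passing to an almost surely convergent subsequence and using that the graph of $\mathcal A$ is closed in $\mathcal H\times\mathcal H$ gives $\int_0^T\Phi\,dW\in D(\mathcal A)$ $\mathbb P$-a.s. with $\mathcal A\int_0^T\Phi\,dW=\int_0^T\mathcal A\Phi\,dW$. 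The main obstacle is thus the simultaneous approximation step; the remaining ingredients (the Hilbert–Schmidt norm splitting, predictability of $\mathcal A\Phi$, and the closed-graph limit) are routine.
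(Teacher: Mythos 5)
The paper does not prove this proposition at all: it is imported verbatim from the cited reference (Proposition 4.15 of Da Prato--Zabczyk), so there is no in-paper argument to compare against. Your proof is correct, and your second route is essentially the standard proof from that reference: approximate by simple processes for which the identity is trivial, pass to the limit using the It\^o isometry, and invoke closedness of $\mathcal A$ on an almost surely convergent subsequence. Your first route (working in $D(\mathcal A)$ with the graph norm, where the Hilbert--Schmidt norm splits as $\|\Phi\|^2+\|\mathcal A\Phi\|^2$ and both $\iota$ and $\mathcal A$ become bounded) is a clean repackaging of the same idea and correctly isolates the only delicate point, namely the simultaneous approximation of $\Phi$ and $\mathcal A\Phi$. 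One small slip: your parenthetical justification of predictability of $\Phi$ as a $D(\mathcal A)$-valued process argues the wrong implication --- continuity of $\mathcal A$ in the graph norm shows that graph-norm predictability \emph{implies} predictability of $\mathcal A\Phi$, whereas what you need is the converse, which follows instead from the fact that the Borel structure of the graph norm is generated by the two coordinate maps $T\mapsto T$ and $T\mapsto\mathcal A T$, so joint predictability of $(\Phi,\mathcal A\Phi)$ yields graph-norm predictability. This is routine and does not affect the validity of the argument.
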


Next, we state a martingale representation theorem for Q-Wiener processes, which we use to construct solutions of backward SPDE's.
Let $Q \in \mathcal L(E)$ be the covariance operator of a Q-Wiener process $(W(t))_{t \in [0,T]}$.
Recall that the operator $Q \in \mathcal L(E)$ is a symmetric and nonnegative semidefinite such that $\text{Tr } Q < \infty$.
Hence, there exists a complete orthonormal system $(e_k)_{k \in \mathbb N}$ in $E$ and a bounded sequence of nonnegative real numbers $(\mu_k)_{k \in \mathbb N}$ such that $Q e_k = \mu_k e_k$ for each $k \in \mathbb N$.
Then for arbitrary $t \in [0,T]$, a Q-Wiener process has the expansion
\begin{equation*}
 W(t) = \sum\limits_{k=1}^\infty \sqrt{\mu_k} w_k(t) e_k,
\end{equation*}
where $(w_k(t))_{t \in [0,T]}$, $k \in \mathbb N$, are real valued mutually independent Brownian motions.
The convergence is in $L^2(\Omega)$.
Furthermore, we assume that the complete probability space $(\Omega,\mathcal F, \mathbb P)$ is endowed with the filtration $\mathcal F_t = \sigma \{ \bigcup_{k=1}^\infty \mathcal F_t^k\}$, where $\mathcal F_t^k = \sigma \{w_k(s): 0 \leq s \leq t\}$ for $t \in [0,T]$ and we require that the $\sigma$-algebra $\mathcal F$ satisfies $\mathcal F = \mathcal F_T$.
Then we have the following martingale representation theorem.

\begin{proposition}[Theorem 2.5,{\cite{sdei}}]\label{martingalerepresentation}
 Let the process $(M(t))_{t \in [0,T]}$ be a continuous $\mathcal F_t$-martingale with values in $\mathcal H$ such that $\mathbb E \| M(t)\|_{\mathcal H}^2 < \infty$ for all $t \in [0,T]$.
 Then there exists a unique predictable process $(\Phi(t))_{t \in [0,T]}$ with values in $\mathcal{L}_{(HS)}(Q^{1/2}(E);\mathcal H)$ such that $\mathbb E \int_0^T \| \Phi(t)\|_{\mathcal{L}_{(HS)}(Q^{1/2}(E);\mathcal H)}^2 dt < \infty$ and we have for all $t \in [0,T]$ and $\mathbb P$-a.s.
 \begin{equation*}
  M(t) = \mathbb E M(0) + \int\limits_0^t \Phi(s) d W(s).
 \end{equation*}
\end{proposition}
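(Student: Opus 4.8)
The plan is to reduce the Hilbert-space statement to the classical real-valued martingale representation theorem by working in orthonormal bases. Fix a complete orthonormal system $(h_j)_{j \in \mathbb N}$ of $\mathcal H$, and recall from the expansion $W(t) = \sum_{k=1}^\infty \sqrt{\mu_k}\, w_k(t)\, e_k$ that the filtration $(\mathcal F_t)_{t \in [0,T]}$ is generated by the countably many mutually independent real Brownian motions $(w_k)_{k \in \mathbb N}$. For each $j$ the scalar process $M_j(t) = \langle M(t), h_j \rangle_{\mathcal H}$ is a continuous real $\mathcal F_t$-martingale with $\mathbb E\, |M_j(t)|^2 < \infty$ for all $t$, since taking conditional expectations commutes with the bounded functional $\langle \cdot, h_j\rangle_{\mathcal H}$. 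Applying the classical martingale representation theorem for a countable family of independent Brownian motions to each $M_j$ produces predictable real processes $(\phi_{jk})_{k \in \mathbb N}$ with $\sum_{k} \mathbb E \int_0^T \phi_{jk}(s)^2\, ds < \infty$ such that, $\mathbb P$-almost surely and for all $t \in [0,T]$,
\begin{equation*}
 M_j(t) = \mathbb E M_j(0) + \sum_{k=1}^\infty \int_0^t \phi_{jk}(s)\, dw_k(s).
\end{equation*}

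Next I would assemble the coefficients $\phi_{jk}$ into a single operator-valued integrand. Since $(\sqrt{\mu_k}\, e_k)_{k \in \mathbb N}$ is a complete orthonormal system of $Q^{1/2}(E)$, define $\Phi(t)$ for $\mathbb P$-almost every $(t,\omega)$ by $\Phi(t)\big(\sqrt{\mu_k}\, e_k\big) = \sum_{j} \phi_{jk}(t)\, h_j$, so that $\|\Phi(t)\|_{\mathcal L_{(HS)}(Q^{1/2}(E);\mathcal H)}^2 = \sum_{j,k} \phi_{jk}(t)^2$. The decisive point is square-integrability of this operator: summing the scalar Itô isometries over $j$ and using Parseval's identity in $\mathcal H$ gives
\begin{equation*}
 \mathbb E \int_0^T \|\Phi(t)\|_{\mathcal L_{(HS)}(Q^{1/2}(E);\mathcal H)}^2\, dt = \sum_{j} \mathbb E\, \big|M_j(T) - \mathbb E M_j(0)\big|^2 = \mathbb E\, \big\| M(T) - \mathbb E M(0) \big\|_{\mathcal H}^2 < \infty.
\end{equation*}
Hence $\int_0^t \Phi(s)\, dW(s)$ is well defined, and by construction together with the isometry (\ref{isometry}) it equals in $L^2(\Omega;\mathcal H)$ the sum $\sum_{j} \big( M_j(t) - \mathbb E M_j(0) \big)\, h_j = M(t) - \mathbb E M(0)$, which is the asserted identity.

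Finally, $\Phi$ is predictable because it is the limit in $L^2([0,T] \times \Omega; \mathcal L_{(HS)}(Q^{1/2}(E);\mathcal H))$ of the finite-rank predictable processes obtained by truncating the sums over $j$ and $k$, and this class is closed under such limits; and uniqueness is immediate from (\ref{isometry}), since if $\Phi_1$ and $\Phi_2$ both represent $M$ then $\mathbb E \int_0^T \|\Phi_1(s) - \Phi_2(s)\|_{\mathcal L_{(HS)}(Q^{1/2}(E);\mathcal H)}^2\, ds = 0$. I expect the main obstacle to be the scalar input, namely the predictable representation property of the filtration generated by a \emph{countable} family of independent Brownian motions --- classical for a single Brownian motion via stochastic exponentials and the Wiener chaos decomposition, but needing an approximation argument to pass to the countable case --- together with the measure-theoretic bookkeeping required to interchange the infinite sums over $j$ and $k$ with the stochastic integral, for which the isometry (\ref{isometry}) is the key tool.
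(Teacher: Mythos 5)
This proposition is not proved in the paper at all: it is imported verbatim as Theorem 2.5 of the cited reference \cite{sdei}, so there is no internal proof to compare your argument against. Your argument is the standard one for establishing it and is essentially sound: projecting onto an orthonormal basis $(h_j)$ of $\mathcal H$, representing each scalar martingale $M_j$ against the countable family $(w_k)$, and reassembling the coefficients $\phi_{jk}$ into a Hilbert--Schmidt-valued integrand, with the isometry (\ref{isometry}) (and its scalar analogue, using orthogonality of integrals against independent Brownian motions) giving both the square-integrability $\sum_{j,k}\mathbb E\int_0^T\phi_{jk}^2\,dt=\mathbb E\|M(T)-\mathbb E M(0)\|_{\mathcal H}^2<\infty$ and the uniqueness. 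The one genuinely nontrivial input, which you correctly identify rather than prove, is the predictable representation property of the filtration generated by a countable family of independent Brownian motions; that is exactly the content one would in any case have to take from the literature, so your proof carries the same logical dependency as the paper's citation. Two minor points worth tidying: the family $(\sqrt{\mu_k}\,e_k)$ is an orthonormal basis of $Q^{1/2}(E)$ only over those $k$ with $\mu_k>0$ (the others contribute nothing to $W$ and $\Phi$ need not be defined on them), and since $\mathcal F_0$ is $\mathbb P$-trivial here one has $M(0)=\mathbb E M(0)$ almost surely, which is why the representation can start from the constant $\mathbb E M(0)$.
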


Finally, we state a product formula for infinite dimensional stochastic processes, which we use to obtain a duality principle.
The formula is an immediate consequence of the Itô formula, see \cite[Theorem 4.32]{seiid}.

\begin{lemma}\label{productformula}
 For $i=1,2$, assume that $X_i^0$ are $\mathcal F_0$-measurable $\mathcal H$-valued random variables, $(f_i(t))_{t \in [0,T]}$ are $\mathcal H$-valued predictable processes such that $\mathbb E \int_0^T \| f_i(t) \|_{\mathcal H} dt < \infty$, and $(\Phi_i(t))_{t \in [0,T]}$ are $\mathcal{L}_{(HS)}(Q^{1/2}(E);\mathcal H)$-valued predictable processes such that $\mathbb E \int_0^T \| \Phi_i(t) \|_{\mathcal{L}_{(HS)}(Q^{1/2}(E);\mathcal H)}^2 dt < \infty$. 
 For $i=1,2$, assume that the processes $(X_i(t))_{t \in [0,T]}$ satisfy for all $t \in [0,T]$ and $\mathbb P$-a.s.
 \begin{equation*}
  X_i(t) = X_i^0 + \int\limits_0^t f_i(s) ds + \int\limits_0^t \Phi_i(s) dW(s).
 \end{equation*}
 Then we have for all $t \in [0,T]$ and $\mathbb P$-a.s.
 \begin{align*}
  \left \langle X_1(t),X_2(t) \right\rangle_{\mathcal H} 
  &= \left \langle X_1^0,X_2^0 \right\rangle_{\mathcal H} + \int\limits_0^t \left[ \left \langle X_1(s),f_2(s) \right\rangle_{\mathcal H} + \left\langle X_2(s),f_1(s) \right\rangle_{\mathcal H} + \left\langle \Phi_1(s),\Phi_2(s) \right\rangle_{\mathcal{L}_{(HS)}(Q^{1/2}(E);\mathcal H)} \right] ds \\
  &\quad + \int\limits_0^t \left \langle X_1(s),\Phi_2(s) d W(s) \right\rangle_{\mathcal H} + \int\limits_0^t \left \langle X_2(s),\Phi_1(s) d W(s) \right\rangle_{\mathcal H}.
 \end{align*}
\end{lemma}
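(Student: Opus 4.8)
The plan is to deduce the identity from the infinite-dimensional Itô formula (Theorem 4.32 of \cite{seiid}), applied on the product Hilbert space $\mathcal K = \mathcal H \times \mathcal H$ to the bilinear functional $F \colon \mathcal K \rightarrow \mathbb R$ defined by $F((h_1,h_2)) = \langle h_1, h_2 \rangle_{\mathcal H}$.

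First I would introduce the joint process $X(t) = (X_1(t),X_2(t))$, the initial value $X^0 = (X_1^0,X_2^0) \in \mathcal K$, the drift $f(t) = (f_1(t),f_2(t))$, and the operator $\Phi(t) \in \mathcal L_{(HS)}(Q^{1/2}(E);\mathcal K)$ given by $\Phi(t)h = (\Phi_1(t)h,\Phi_2(t)h)$. Since
\begin{equation*}
 \|\Phi(t)\|_{\mathcal L_{(HS)}(Q^{1/2}(E);\mathcal K)}^2 = \|\Phi_1(t)\|_{\mathcal L_{(HS)}(Q^{1/2}(E);\mathcal H)}^2 + \|\Phi_2(t)\|_{\mathcal L_{(HS)}(Q^{1/2}(E);\mathcal H)}^2
\end{equation*}
and the measurability and integrability assumptions on $X_i^0$, $f_i$, $\Phi_i$ transfer componentwise, the process $X$ is $\mathcal K$-valued, predictable, and satisfies $X(t) = X^0 + \int_0^t f(s)\,ds + \int_0^t \Phi(s)\,dW(s)$ for all $t \in [0,T]$ $\mathbb P$-a.s.; in particular it has $\mathbb P$-a.s. continuous trajectories, so that $\sup_{t \in [0,T]} \|X(t)\|_{\mathcal K} < \infty$ $\mathbb P$-a.s.

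The functional $F$ is of class $C^2$ with first derivative $F'((h_1,h_2))[(k_1,k_2)] = \langle h_1,k_2\rangle_{\mathcal H} + \langle k_1,h_2\rangle_{\mathcal H}$ and constant second derivative $F''[(k_1,k_2),(\ell_1,\ell_2)] = \langle k_1,\ell_2\rangle_{\mathcal H} + \langle \ell_1,k_2\rangle_{\mathcal H}$; $F'$ and $F''$ are globally Lipschitz and $F$ is uniformly continuous on bounded sets, so the hypotheses of the Itô formula are met after the standard localization along the stopping times $\tau_N = \inf\{t \in [0,T] \colon \|X(t)\|_{\mathcal K} > N\} \wedge T$ (localization is needed because the assumptions only yield $\mathbb P$-a.s., not in-expectation, integrability of the terms that will appear). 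Applying the formula to $F(X(\cdot \wedge \tau_N))$ and reading off the three contributions: the drift term gives $\int_0^{t}\bigl(\langle X_1(s),f_2(s)\rangle_{\mathcal H} + \langle X_2(s),f_1(s)\rangle_{\mathcal H}\bigr)\,ds$; the stochastic term gives $\int_0^{t}\langle X_1(s),\Phi_2(s)\,dW(s)\rangle_{\mathcal H} + \int_0^{t}\langle X_2(s),\Phi_1(s)\,dW(s)\rangle_{\mathcal H}$; and for the Itô correction term, expanding over an orthonormal basis $(g_k)_{k \in \mathbb N}$ of $Q^{1/2}(E)$ and using that $F''$ is the off-diagonal symmetric form above,
\begin{equation*}
 \frac{1}{2}\sum_{k=1}^\infty F''[\Phi(s)g_k,\Phi(s)g_k] = \sum_{k=1}^\infty \langle \Phi_1(s)g_k,\Phi_2(s)g_k\rangle_{\mathcal H} = \langle \Phi_1(s),\Phi_2(s)\rangle_{\mathcal L_{(HS)}(Q^{1/2}(E);\mathcal H)}.
\end{equation*}

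Finally I would let $N \rightarrow \infty$. Since $\tau_N \uparrow T$ $\mathbb P$-a.s., and since the $\mathbb P$-a.s. boundedness of the trajectories of $X_1,X_2$ together with $\int_0^T\|f_i(s)\|_{\mathcal H}\,ds < \infty$ and $\int_0^T\|\Phi_i(s)\|_{\mathcal L_{(HS)}(Q^{1/2}(E);\mathcal H)}^2\,ds < \infty$ $\mathbb P$-a.s. make each of the Lebesgue and (local) stochastic integrals above well defined, the localized identities converge and yield the claimed formula for every $t \in [0,T]$, $\mathbb P$-a.s. I expect the main obstacle to be exactly this localization and passage to the limit — in particular checking that $\langle X_i(s),\Phi_j(s)\,\cdot\,\rangle_{\mathcal H}$ is an admissible (locally square-integrable) integrand, which holds because $\|\langle X_i(s),\Phi_j(s)\,\cdot\,\rangle_{\mathcal H}\|_{\mathcal L_{(HS)}(Q^{1/2}(E);\mathbb R)} \leq \|X_i(s)\|_{\mathcal H}\,\|\Phi_j(s)\|_{\mathcal L_{(HS)}(Q^{1/2}(E);\mathcal H)}$; the fact that $F$ is exactly bilinear, so that $F''$ is constant, guarantees that no further term appears beyond the cross term already identified.
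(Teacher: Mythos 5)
Your proposal is correct and follows exactly the route the paper intends: the paper gives no detailed proof but states that the lemma "is an immediate consequence of the Itô formula, see [Theorem 4.32, Da Prato--Zabczyk]", and your argument is precisely the standard derivation of that consequence (Itô's formula on $\mathcal H \times \mathcal H$ applied to the bilinear functional $\langle h_1,h_2\rangle_{\mathcal H}$, with the trace term reducing to the Hilbert--Schmidt inner product). Your computations of $F'$, $F''$ and the correction term are all accurate, and the localization step, while arguably already subsumed by the cited theorem under the stated integrability hypotheses, is carried out correctly.
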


\section{The Stochastic Stokes Equations}\label{sec:stochasticstokes}

In this section, we consider the controlled stochastic Stokes equations.
Here, controls appear as distributed controls inside the domain as well as tangential controls on the boundary.

Let $(\Omega,\mathcal{F}, \mathbb{P})$ be a complete probability space endowed with a filtration $(\mathcal{F}_t)_{t \in [0,T]}$ satisfying $\mathcal F_t = \bigcap_{s > t} \mathcal F_s$ for all $t \in [0,T]$.
We assume that the external force $f(t)$ in equation (\ref{detstokes}) can be decomposed as the sum of a control term and a noise term dependent on the velocity field $y(t)$.
Using the spaces and operators introduced in Section \ref{sec:functionalbackground} and Section \ref{sec:stokesequations}, we obtain the stochastic Stokes equations:
\begin{equation}\label{stochstokes}
 \left\{
 \begin{aligned}
  d y(t) &= \left[- A y(t) + B u(t) + A D v(t) \right] dt + G(y(t)) d W(t), \\
  y(0) &= \xi,
 \end{aligned}
 \right.
\end{equation}
where the initial value $\xi$ is assumed to be $\mathcal F_0$-measurable and the process $(W(t))_{t \in [0,T]}$ is a Q-Wiener process with values in $H$ and covariance operator $Q \in \mathcal L(H)$.
The set of admissible distributed controls $U$ contains all predictable processes $(u(t))_{t \in [0,T]}$ with values in $H$ such that 
\begin{equation*}
 \mathbb E \int\limits_0^T \left\| u(t) \right\|_H^2 dt < \infty.
\end{equation*}
The space $U$ equipped with the inner product of $L^2(\Omega;L^2([0,T];H))$ becomes a Hilbert space.
Similarly, the set of admissible boundary controls $V$ contains all predictable processes $(v(t))_{t \in [0,T]}$ with values in $V^0(\partial \mathcal D)$ such that 
\begin{equation*}
 \mathbb E \int\limits_0^T \left\| v(t) \right\|_{V^0(\partial \mathcal D)}^2 dt < \infty.
\end{equation*}
The space $V$ equipped with the inner product of $L^2(\Omega;L^2([0,T];V^0(\partial \mathcal D)))$ becomes a Hilbert space.
The operators $B \colon H \rightarrow H$ and $G \colon H \rightarrow \mathcal{L}_{(HS)}(Q^{1/2}(H);H)$ are linear and bounded.
Motivated by Section \ref{sec:stokesequations}, we introduce the definition of a mild solution to system (\ref{stochstokes}).

\begin{definition}
 A predictable process $(y(t))_{t \in [0,T]}$ with values in $D(A^\alpha)$ is called a \textbf{mild solution of system (\ref{stochstokes})} if
 \begin{equation}\label{solutionregularity}
  \mathbb E \int\limits_0^T \| y(t)\|_{D(A^\alpha)}^2 dt < \infty,
 \end{equation}
 and we have for $t \in [0,T]$ and $\mathbb P$-a.s.
 \begin{align*}
  y(t) =&\; e^{-A t} \xi + \int\limits_0^t e^{-A (t-s)} B u(s) ds + \int\limits_0^t A e^{-A (t-s)} D v(s) ds + \int\limits_0^t e^{-A (t-s)}G(y(s)) d W(s).
 \end{align*}
\end{definition}

We get the following existence and uniqueness result.

\begin{theorem}\label{existencestochstokes}
 Let the controls $u \in U$ and $v \in V$ be fixed.
 If $\alpha \in [0,\frac{1}{4})$, then for any $\xi \in L^2(\Omega;D(A^\alpha))$, there exists a unique mild solution $(y(t))_{t \in [0,T]}$ of system (\ref{stochstokes}).
\end{theorem}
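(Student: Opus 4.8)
The plan is to use a fixed point argument in the Banach space $\mathcal{Y} := \{ y : [0,T] \times \Omega \to D(A^\alpha) \text{ predictable} \colon \mathbb{E} \int_0^T \|y(t)\|_{D(A^\alpha)}^2 \, dt < \infty \}$, equipped with the norm $\|y\|_{\mathcal{Y}}^2 = \mathbb{E} \int_0^T e^{-\lambda t}\|y(t)\|_{D(A^\alpha)}^2 \, dt$ for a parameter $\lambda > 0$ to be chosen large, which is equivalent to the natural norm. Define the map $\mathcal{K} \colon \mathcal{Y} \to \mathcal{Y}$ by
\begin{equation*}
 (\mathcal{K}y)(t) = e^{-At}\xi + \int_0^t e^{-A(t-s)}Bu(s)\,ds + \int_0^t A e^{-A(t-s)}Dv(s)\,ds + \int_0^t e^{-A(t-s)}G(y(s))\,dW(s).
\end{equation*}
The first task is to check $\mathcal{K}$ is well defined, i.e. that each of the four terms lies in $\mathcal{Y}$. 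For the first term, $\|A^\alpha e^{-At}\xi\|_H \leq \|e^{-At}A^\alpha \xi\|_H \leq \|A^\alpha\xi\|_H$ using Lemma \ref{fractional}(iii) and the contractivity of the semigroup, so it is even in $L^\infty([0,T];L^2(\Omega;H))$ after applying $A^\alpha$. For the second term, by Lemma \ref{fractional}(iv) with exponent $\alpha < 1$, $\|A^\alpha e^{-A(t-s)}B u(s)\|_H \leq M_\alpha (t-s)^{-\alpha} e^{-\theta(t-s)} \|B\|_{\mathcal{L}(H)} \|u(s)\|_H$, and since $(t-s)^{-\alpha}$ is integrable on $[0,t]$ (as $\alpha < 1$), a Young/Minkowski inequality in time together with $u \in U$ gives membership in $\mathcal{Y}$. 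For the third term we use the crucial fact established just before the theorem: $A^\beta D \in \mathcal{L}(V^0(\partial\mathcal{D}); H)$ for $\beta \in (0,\tfrac14)$; writing $A^\alpha A e^{-A(t-s)} D v(s) = A^{1+\alpha-\beta} e^{-A(t-s)} A^\beta D v(s)$ and applying Lemma \ref{fractional}(iv) with exponent $1 + \alpha - \beta$, we need $1 + \alpha - \beta < 1$, i.e. $\beta > \alpha$; since $\alpha \in [0,\tfrac14)$ we may pick $\beta \in (\alpha, \tfrac14)$, and then $(t-s)^{-(1+\alpha-\beta)}$ is integrable and the same convolution estimate applies with $v \in V$.

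The fourth term is the main obstacle, since it is the only term carrying the unknown $y$ and it is a stochastic convolution. Here I would apply $A^\alpha$ inside the integral using Proposition \ref{closedopstochint} (with $\mathcal{A} = A^\alpha$), then use the Itô isometry \eqref{isometry}:
\begin{equation*}
 \mathbb{E}\left\| \int_0^t A^\alpha e^{-A(t-s)} G(y(s))\,dW(s) \right\|_H^2 = \mathbb{E}\int_0^t \left\| A^\alpha e^{-A(t-s)} G(y(s)) \right\|_{\mathcal{L}_{(HS)}(Q^{1/2}(H);H)}^2 ds.
\end{equation*}
Since $G$ is bounded into $\mathcal{L}_{(HS)}(Q^{1/2}(H);H)$ and $A^\alpha e^{-A(t-s)}$ is a bounded operator on $H$ with norm $\leq M_\alpha (t-s)^{-\alpha} e^{-\theta(t-s)}$, the Hilbert-Schmidt norm is controlled by $M_\alpha (t-s)^{-\alpha} e^{-\theta(t-s)} \|G\|\,\|y(s)\|_H \leq C (t-s)^{-\alpha} e^{-\theta(t-s)} \|y(s)\|_{D(A^\alpha)}$ (using Lemma \ref{fractional}(v) to pass from $\|\cdot\|_H$ to $\|\cdot\|_{D(A^\alpha)}$ on the right, or simply boundedness of $A^{-\alpha}$). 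Note $2\alpha < 1$ since $\alpha < \tfrac14 < \tfrac12$, so $(t-s)^{-2\alpha}$ is integrable; integrating in $t$ over $[0,T]$, exchanging order of integration, and taking expectations yields
\begin{equation*}
 \mathbb{E}\int_0^T e^{-\lambda t}\left\| \int_0^t A^\alpha e^{-A(t-s)} G(y(s))\,dW(s) \right\|_H^2 dt \leq C^2 \left(\int_0^T r^{-2\alpha} e^{-\theta r}\, dr\right) \frac{1}{\lambda}\, \mathbb{E}\int_0^T e^{-\lambda s}\|y(s)\|_{D(A^\alpha)}^2 ds,
\end{equation*}
after bounding $\int_0^T r^{-2\alpha}e^{-\theta r} dr$ by a finite constant independent of $\lambda$ and using $\int_s^T e^{-\lambda(t-s)}\,dt \leq 1/\lambda$. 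The same computation applied to the difference $\mathcal{K}y_1 - \mathcal{K}y_2$ (where the first three terms cancel and $G$ is linear) gives $\|\mathcal{K}y_1 - \mathcal{K}y_2\|_{\mathcal{Y}} \leq \frac{C'}{\sqrt{\lambda}}\|y_1 - y_2\|_{\mathcal{Y}}$, so choosing $\lambda$ large enough makes $\mathcal{K}$ a contraction.

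Finally, I would invoke the Banach fixed point theorem to obtain a unique $y \in \mathcal{Y}$ with $\mathcal{K}y = y$, which is by definition the mild solution; uniqueness in the class \eqref{solutionregularity} is immediate from uniqueness of the fixed point (and the norm equivalence). One remaining technical point to address is predictability of $\mathcal{K}y$: the deterministic convolution terms are continuous (hence predictable after noting adaptedness), the term $e^{-At}\xi$ is predictable since $\xi$ is $\mathcal{F}_0$-measurable, and the stochastic convolution has a predictable version because it is $\mathcal{F}_t$-adapted and stochastically continuous, by Lemma \ref{predictable}; thus $\mathcal{K}$ indeed maps $\mathcal{Y}$ into itself. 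The hard part is really just the bookkeeping of the fractional-power exponents in the third term — one must verify that the window $\alpha \in [0,\tfrac14)$ is exactly what allows a choice of $\beta$ with $\alpha < \beta < \tfrac14$ keeping $1 + \alpha - \beta < 1$ while retaining $A^\beta D$ bounded — everything else is a routine singular-kernel convolution estimate.
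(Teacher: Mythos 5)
Your proposal is correct and follows essentially the same route as the paper: the same splitting into the four terms, the same use of Lemma \ref{fractional}(iv) with the choice $\alpha<\beta<\tfrac14$ so that $A^{1+\alpha-\beta}e^{-A(t-s)}$ has an integrable singularity while $A^\beta D$ remains bounded, the same application of Proposition \ref{closedopstochint} and the It\^o isometry to the stochastic convolution, and Lemma \ref{predictable} for predictability. The only structural difference is how the contraction is secured: you use an exponentially weighted (Bielecki) norm on all of $[0,T]$ and let $\lambda\to\infty$, whereas the paper keeps the unweighted norm, shrinks the interval to $[0,T_1]$ with $C_3T_1^{1-2\alpha}<1$, and then iterates over successive subintervals; both devices are standard and equivalent in outcome, yours avoiding the patching step at the cost of the weighted-norm bookkeeping. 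One small inaccuracy: the bound $\bigl(\int_0^T r^{-2\alpha}e^{-\theta r}\,dr\bigr)\cdot\tfrac{1}{\lambda}$ does not follow from splitting the single integral $\int_0^{T-s}e^{-\lambda r}r^{-2\alpha}e^{-2\theta r}\,dr$ into a product of two integrals; the correct estimate is $\int_0^\infty e^{-\lambda r}r^{-2\alpha}\,dr=\Gamma(1-2\alpha)\lambda^{2\alpha-1}$ (or a H\"older variant), which still tends to $0$ as $\lambda\to\infty$ because $2\alpha<1$, so the contraction argument goes through unchanged.
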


\begin{proof}
 For all $t_0,t_1 \in [0,T]$ with $t_0 < t_1$, let the space $\mathcal Z_{[t_0,t_1]}$ contain all predictable processes $(\tilde y(t))_{t \in [t_0,t_1]}$ with values in $D(A^\alpha)$ such that $\mathbb E \int_{t_0}^{t_1} \| \tilde y(t)\|_{D(A^\alpha)}^2 dt < \infty$.
 The space $\mathcal Z_{[t_0,t_1]}$ equipped with the inner product 
 \begin{equation*}
  \langle \tilde y_1,\tilde y_2 \rangle_{\mathcal Z_{[t_0,t_1]}}^2 = \mathbb E \int\limits_{t_0}^{t_1} \langle \tilde y_1(t),\tilde y_2(t) \rangle_{D(A^\alpha)}^2 dt
 \end{equation*}
 for every $\tilde y_1,\tilde y_2 \in \mathcal Z_{[t_0,t_1]}$ becomes a Hilbert space.
 We define for $t \in [0,T]$ and $\mathbb P$-a.s.
 \begin{equation*}
  \mathcal J(\tilde y)(t) = e^{-A t} \xi + \int\limits_0^t e^{-A (t-s)} B u(s) ds + \int\limits_0^t A e^{-A (t-s)} D v(s) ds + \int\limits_0^t e^{-A (t-s)}G(\tilde y(s)) d W(s).
 \end{equation*}
 Let $T_1 \in (0,T]$ and let us denote by $\mathcal Z_{T_1}$ the space $\mathcal Z_{[0,T_1]}$.
 First, we prove that $\mathcal J$ maps $\mathcal Z_{T_1}$ into itself.
 We define for $t \in [0,T_1]$ and $\mathbb P$-a.s.
 \begin{equation*}
  \psi_1(t) = e^{-A t} \xi + \int\limits_0^t e^{-A (t-s)} B u(s) ds,
  \quad \psi_2(t) = \int\limits_0^t A e^{-A (t-s)} D v(s) ds, 
  \quad \psi_3(\tilde y)(t) = \int\limits_0^t e^{-A (t-s)}G(\tilde y(s)) d W(s).
 \end{equation*}
 Recall that $\left\| e^{-At} \right\|_{\mathcal L(H)} \leq 1$ for all $t \in [0,T]$ and $B \colon H \rightarrow H$ is bounded.
 Using Lemma \ref{fractional} and the Cauchy-Schwarz inequality, the process $(\psi_1(t))_{t \in [0,T_1]}$ takes values in $D(A^\alpha)$ and there exists a constant $C_1 > 0$ such that
 \begin{align*}
  \mathbb E \int\limits_0^{T_1} \left\| \psi_1(t) \right\|_{D(A^\alpha)}^2 dt
  &\leq 2 \, \mathbb E \int\limits_0^{T_1} \left\| e^{-A t} A^\alpha \xi \right\|_H^2 dt + 2 \, \mathbb E \int\limits_0^{T_1} \left(\int\limits_0^t \left\| A^\alpha e^{-A (t-s)} B u(s) \right\|_H ds \right)^2 dt \\
  &\leq 2 T_1 \, \mathbb E \left\| \xi \right\|_{D(A^\alpha)}^2 + 2 M_\alpha^2 \, \mathbb E \int\limits_0^{T_1} \left(\int\limits_0^t (t-s)^{-\alpha} \left\| B u(s) \right\|_H ds \right)^2 dt \\
  &\leq C_1 \left[ \mathbb E \|\xi\|_{D(A^\alpha)}^2 + \mathbb E \int\limits_0^{T_1} \left\| u(t)\right\|_H^2 dt \right].
 \end{align*}
 Recall that $A^\beta D \colon V^0(\partial \mathcal D) \rightarrow H$ is bounded for all $\beta \in \left(0,\frac{1}{4}\right)$.
 We chose $\beta$ such that $\alpha < \beta$.
 By Lemma \ref{fractional} and Young's inequality for convolutions, the process $(\psi_2(t))_{t \in [0,T_1]}$ takes values in $D(A^\alpha)$ and there exists a constant $C_2 > 0$ such that
 \begin{align*}
  \mathbb E \int\limits_0^{T_1} \left\| \psi_2(t) \right\|_{D(A^\alpha)}^2 dt
  &\leq \mathbb E \int\limits_0^{T_1} \left(\int\limits_0^t \left\| A^{1+\alpha-\beta} e^{-A (t-s)} A^\beta D v(s) \right\|_H ds \right)^2 dt \\
  &\leq M_{1+\alpha-\beta}^2 \mathbb E \int\limits_0^{T_1} \left(\int\limits_0^t (t-s)^{-1-\alpha+\beta} \left\| A^\beta D v(s) \right\|_H ds \right)^2 dt \\
  &\leq M_{1+\alpha-\beta}^2 \left(\int\limits_0^{T_1} t^{-1-\alpha+\beta} dt \right)^2 \mathbb E \int\limits_0^{T_1} \left\| A^\beta D v(t) \right\|_H^2 dt \\
  &\leq C_2 \, \mathbb E \int\limits_0^{T_1} \left\| v(t)\right\|_{V^0(\partial \mathcal D)}^2 dt.
 \end{align*}
 Due to Lemma \ref{fractional} and the fact that the operator $G \colon H \rightarrow \mathcal{L}_{(HS)}(Q^{1/2}(H);H)$ is bounded, one can verify the assumptions of Proposition \ref{closedopstochint} with $\mathcal A = A^\alpha$ and hence, the process $(\psi_3(\tilde y)(t))_{t \in [0,T_1]}$ takes values in $D(A^\alpha)$.
 Using Lemma \ref{fractional}, Fubini's theorem, the Itô isometry (\ref{isometry}) and Young's inequality for convolutions, there exists a constant $C_3 > 0$ such that
 \begin{align}\label{ineq1}
  \mathbb E \int\limits_0^{T_1} \left\| \psi_3(\tilde y)(t) \right\|_{D(A^\alpha)}^2 dt
  &= \int\limits_0^{T_1} \mathbb E \int\limits_0^t \left\| A^\alpha e^{-A (t-s)} G(\tilde y(s)) \right\|_{\mathcal{L}_{(HS)}(Q^{1/2}(H);H)}^2 ds \, dt \nonumber \\
  &\leq M_\alpha^2 \, \mathbb E \int\limits_0^{T_1} \int\limits_0^t (t-s)^{-2\alpha} \left\| G(\tilde y(s)) \right\|_{\mathcal{L}_{(HS)}(Q^{1/2}(H);H)}^2 ds \, dt \nonumber \\
  &\leq C_3 T_1^{1-2\alpha} \, \mathbb E \int\limits_0^{T_1} \| \tilde y(t)\|_{D(A^\alpha)}^2 dt.
 \end{align}
 Hence, we can conclude that for fixed $\tilde y \in \mathcal Z_{T_1}$, the process $(\mathcal J(\tilde y)(t))_{t \in [0,T_1]}$ takes values in $D(A^\alpha)$ such that $\mathbb E \int_0^{T_1} \| \mathcal J(\tilde y)(t)\|_{D(A^\alpha)}^2 < \infty$.
 Obviously, the process $(\mathcal J(\tilde y)(t))_{t \in [0,T_1]}$ is predictable.
 We conclude that $\mathcal J$ maps $\mathcal Z_{T_1}$ into itself.
 
 Next, we show that $\mathcal J$ is a contraction on $\mathcal Z_{T_1}$.
 Recall that the operator $G \colon H \rightarrow \mathcal{L}_{(HS)}(Q^{1/2}(H);H)$ is linear.
 Using inequality (\ref{ineq1}), we get for every $ \tilde y_1, \tilde y_2 \in \mathcal Z_{T_1}$
 \begin{equation*}
  \mathbb E \int\limits_0^{T_1} \left\| \mathcal J(\tilde y_1)(t) - \mathcal J(\tilde y_2)(t) \right\|_{D(A^\alpha)}^2 dt
  = \mathbb E \int\limits_0^{T_1} \left\| \psi_3(\tilde y_1-\tilde y_2)(t) \right\|_{D(A^\alpha)}^2 dt
  \leq C_3 T_1^{1-2\alpha} \; \mathbb E \int\limits_0^{T_1} \| \tilde y_1(t)-\tilde y_2(t)\|_{D(A^\alpha)}^2 dt.
 \end{equation*}
 We choose $T_1 \in (0,T]$ such that $C_3 T_1^{1-2\alpha}<1$.
 Applying Banach fixed point theorem, we get a unique element $y \in \mathcal Z_{T_1}$ such that for $t \in [0,T_1]$ and $\mathbb P$-a.s. $y(t) = \mathcal J(y)(t)$.
 
 Next, we consider for $t \in [T_1,T]$ and $\mathbb P$-a.s.
 \begin{equation*}
  \mathcal J(\tilde y)(t) = e^{-A (t-T_1)} y(T_1) + \int\limits_{T_1}^t e^{-A (t-s)} B u(s) ds + \int\limits_{T_1}^t A e^{-A (t-s)} D v(s) ds + \int\limits_{T_1}^t e^{-A (t-s)}G(\tilde y(s)) d W(s).
 \end{equation*}
 Again, for a certain $T_2 \in [T_1,T]$, there exists a unique fixed point of $\mathcal J$ on $\mathcal Z_{[T_1,T_2]}$.
 By continuing the method, we get the existence and uniqueness of a predictable process $(y(t))_{t \in [0,T]}$ satisfying for $t \in [0,T]$ and $\mathbb P$-a.s. $y(t) = \mathcal J(y)(t)$.
\end{proof}

For the rest of the paper, we assume that $(y(t))_{t \in [0,T]}$ satisfies condition (\ref{solutionregularity}) with $\alpha=0$ and we assume that the initial value $\xi \in L^2(\Omega;H)$ is fixed.
To illustrate the dependence on the controls $u \in U$ and $v \in V$, let us denote by $(y(t;u,v))_{t \in [0,T]}$ the mild solution of system (\ref{stochstokes}).
Whenever the process is considered for fixed controls, we omit the dependency.

Next, we show some useful properties.
Therefor, we need the following formulation of Gronwall's inequality for integrable functions.
The result might be deduced from more general formulations, see \cite{pfqa,anto,aggi}.

\begin{lemma}\label{gronwall}
 Let $a,x \colon [0,T] \rightarrow [0,\infty)$ be integrable functions and let $b \geq 0$.
 If
 \begin{equation*}
  x(t) \leq a(t) + b \int\limits_0^t x(s) ds
 \end{equation*}
 for all $t \in [0,T]$, then
 \begin{equation*}
  x(t) \leq a(t) + b \int\limits_0^t e^{b(t-s)} a(s) ds.
 \end{equation*}
 for all $t \in [0,T]$.
 If $a(t)$ is nondecreasing on $[0,T]$, then for $t \in [0,T]$
 \begin{equation*}
  x(t) \leq a(t) e^{b t}.
 \end{equation*}
\end{lemma}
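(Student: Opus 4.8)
The plan is to iterate the hypothesis inequality and identify the resulting series, much as in the classical scalar Gronwall argument, but keeping track of the fact that $a$ need not be monotone. First I would set $X(t) = \int_0^t x(s)\,ds$, so that the hypothesis reads $x(t) \le a(t) + b\,X(t)$ for all $t \in [0,T]$. Substituting this bound on $x(s)$ back into $X(t)$ once gives
\begin{equation*}
 x(t) \le a(t) + b \int\limits_0^t \Bigl( a(s) + b \int\limits_0^s x(r)\,dr \Bigr) ds
 = a(t) + b \int\limits_0^t a(s)\,ds + b^2 \int\limits_0^t \int\limits_0^s x(r)\,dr\,ds.
\end{equation*}
Iterating this substitution $n$ times and using Fubini's theorem to collapse the nested integrals, the $k$-th term becomes $b^k \int_0^t \frac{(t-s)^{k-1}}{(k-1)!} a(s)\,ds$ for $k \ge 1$, while the remainder term is $b^n \int_0^t \frac{(t-s)^{n-1}}{(n-1)!} x(s)\,ds$. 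This is the step I expect to be the main bookkeeping obstacle: one has to prove by induction that the $n$-fold iterated integral $\int_0^t \int_0^{s_1} \cdots \int_0^{s_{n-1}} h(s_n)\,ds_n\cdots ds_1$ equals $\int_0^t \frac{(t-s)^{n-1}}{(n-1)!} h(s)\,ds$ for integrable $h$, which is a routine but careful application of Fubini.

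Next I would control the remainder. Since $x$ is integrable on $[0,T]$, set $K = \int_0^T x(s)\,ds < \infty$; then the remainder term is bounded in absolute value by $\frac{(bT)^n}{(n-1)!}\,K \to 0$ as $n \to \infty$. Letting $n \to \infty$ in the iterated inequality therefore yields
\begin{equation*}
 x(t) \le a(t) + \sum_{k=1}^\infty b^k \int\limits_0^t \frac{(t-s)^{k-1}}{(k-1)!} a(s)\,ds
 = a(t) + b \int\limits_0^t \Bigl( \sum_{k=1}^\infty \frac{\bigl(b(t-s)\bigr)^{k-1}}{(k-1)!} \Bigr) a(s)\,ds
 = a(t) + b \int\limits_0^t e^{b(t-s)} a(s)\,ds,
\end{equation*}
where the interchange of sum and integral is justified by the monotone convergence theorem (all terms are nonnegative since $a \ge 0$ and $b \ge 0$). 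This proves the first conclusion.

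Finally, for the last assertion, suppose $a$ is nondecreasing on $[0,T]$. Then for $s \le t$ we have $a(s) \le a(t)$, so
\begin{equation*}
 x(t) \le a(t) + b\,a(t) \int\limits_0^t e^{b(t-s)}\,ds = a(t) + a(t)\bigl( e^{bt} - 1 \bigr) = a(t)\,e^{bt},
\end{equation*}
which is the desired bound. The only genuinely delicate point in the whole argument is ensuring the iteration and the limit are valid under the mere integrability (not boundedness or continuity) of $a$ and $x$; nonnegativity of $a$, $b$, and $x$ is what makes all the term-by-term manipulations legitimate via monotone convergence and Tonelli's theorem.
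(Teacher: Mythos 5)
Your proof is correct. Note that the paper does not actually prove this lemma at all --- it merely remarks that "the result might be deduced from more general formulations" and cites references --- so your self-contained Picard-iteration argument is a genuine addition rather than a rederivation: you iterate the hypothesis, collapse the nested integrals via Tonelli to get the kernel $\frac{(t-s)^{k-1}}{(k-1)!}$, kill the remainder using integrability of $x$, and sum the series by monotone convergence, all of which is legitimate under mere integrability and nonnegativity. The only blemish is cosmetic: your remainder bound should read $b\,\frac{(bT)^{n-1}}{(n-1)!}K$ rather than $\frac{(bT)^n}{(n-1)!}K$ (the latter fails to dominate the remainder when $T<1$), but either expression tends to zero, so the conclusion is unaffected.
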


\begin{corollary}\label{stateproperties}
 Let $(y(t;u,v))_{t \in [0,T]}$ be the mild solution of system (\ref{stochstokes}) corresponding to the controls $u \in U$ and $v \in V$.
 Then the process $(y(t;u,v))_{t \in [0,T]}$ is affine linear with respect to $u$ and $v$, and we have for every $u_1,u_2 \in U$ and every $v_1,v_2 \in V$
 \begin{equation}\label{statecontinuity}
  \mathbb E \int\limits_0^T \| y(t;u_1,v_1) - y(t;u_2,v_2)\|_H^2 dt \leq \widehat C \left[ \mathbb E \int\limits_0^T \left\| u_1(t) - u_2(t) \right\|_H^2 dt + \mathbb E \int\limits_0^T \left\| v_1(t) - v_2(t) \right\|_{V^0(\partial \mathcal D)}^2 dt \right],
 \end{equation}
 where $\widehat C > 0$ is a constant.
\end{corollary}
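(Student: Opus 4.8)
The claim has two parts: affine-linearity of $u,v \mapsto y(\cdot;u,v)$, and the Lipschitz-type estimate \eqref{statecontinuity}. For the first part, the plan is to exploit the linearity of the mild-solution formula together with the linearity of $B$, $AD$, and $G$. Concretely, fix $\lambda \in [0,1]$ and controls $u_1,u_2 \in U$, $v_1,v_2 \in V$. I would show that the process $\lambda y(\cdot;u_1,v_1) + (1-\lambda) y(\cdot;u_2,v_2)$ solves the mild equation associated to the controls $\lambda u_1 + (1-\lambda) u_2$ and $\lambda v_1 + (1-\lambda) v_2$, using that the only term depending on $y$ is $\int_0^t e^{-A(t-s)} G(y(s))\,dW(s)$, which is linear in $y$ because $G$ is linear; the deterministic source terms are linear in $u$ and $v$ respectively; and the term $e^{-At}\xi$ is a fixed affine part (independent of the controls). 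By the uniqueness statement of Theorem \ref{existencestochstokes}, this convex combination must equal $y(\cdot;\lambda u_1+(1-\lambda)u_2,\lambda v_1+(1-\lambda)v_2)$, which is exactly affine-linearity. (Equivalently, one may write $y(\cdot;u,v) = y(\cdot;0,0) + z(\cdot;u,v)$ where $z$ depends linearly on $(u,v)$ and solves the same equation with $\xi = 0$.)

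For the estimate, set $u = u_1 - u_2$, $v = v_1 - v_2$, and $z(t) = y(t;u_1,v_1) - y(t;u_2,v_2)$. By the affine-linearity just established (with $\xi$ cancelling), $z$ is the mild solution with initial value $0$ and controls $u,v$, i.e.
\begin{equation*}
 z(t) = \int\limits_0^t e^{-A(t-s)} B u(s)\,ds + \int\limits_0^t A e^{-A(t-s)} D v(s)\,ds + \int\limits_0^t e^{-A(t-s)} G(z(s))\,dW(s).
\end{equation*}
The plan is to take $\|\cdot\|_H^2$, use $\|a+b+c\|^2 \le 3(\|a\|^2+\|b\|^2+\|c\|^2)$, take expectations, and bound each term exactly as in the proof of Theorem \ref{existencestochstokes} but now on the whole interval $[0,T]$ and with $\alpha = 0$. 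The first term is bounded by $C\,\mathbb E\int_0^t \|u(s)\|_H^2\,ds$ via boundedness of $B$, $\|e^{-A(t-s)}\|_{\mathcal L(H)} \le 1$, and Cauchy–Schwarz; the second by $C\,\mathbb E\int_0^t \|v(s)\|_{V^0(\partial\mathcal D)}^2\,ds$ via Lemma \ref{fractional}(iv) (writing $A e^{-A(t-s)} = A^{1-\beta} e^{-A(t-s)} A^\beta$ with $\beta \in (0,\tfrac14)$, so that $(t-s)^{-(1-\beta)}$ is integrable) together with boundedness of $A^\beta D$ and Young's inequality for convolutions; the stochastic term, by the Itô isometry \eqref{isometry}, Fubini, and boundedness of $G$, is bounded by $C\int_0^t \mathbb E\|z(s)\|_H^2\,ds$. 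Writing $x(t) = \mathbb E\|z(t)\|_H^2$ (which is finite since $z \in \mathcal Z_{[0,T]}$), this yields
\begin{equation*}
 x(t) \le a(t) + b \int\limits_0^t x(s)\,ds, \qquad a(t) = \widetilde C\left[\mathbb E\int\limits_0^t \|u(s)\|_H^2\,ds + \mathbb E\int\limits_0^t \|v(s)\|_{V^0(\partial\mathcal D)}^2\,ds\right],
\end{equation*}
with $a$ nondecreasing. Gronwall's inequality (Lemma \ref{gronwall}) gives $x(t) \le a(t) e^{bt} \le a(T) e^{bT}$; integrating over $t \in [0,T]$ produces \eqref{statecontinuity} with $\widehat C = T e^{bT} \widetilde C$.

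The main obstacle is purely bookkeeping rather than conceptual: one must handle the singular kernel $(t-s)^{-(1-\beta)}$ in the $Dv$-term carefully so that, after applying Young's inequality for convolutions, the resulting constant is finite and independent of the data — this is where the restriction $\beta \in (0,\tfrac14)$ (hence $D \in \mathcal L(V^0(\partial\mathcal D);D(A^\beta))$ from Proposition \ref{domainfractional1}, Proposition \ref{domainfractional2} and Corollary \ref{dirichletoperatorregularity}) is essential, and one should double-check that $\alpha = 0 < \beta$ so that $1 + \alpha - \beta < 1$. A minor subtlety is justifying the interchange of expectation and time-integration in the stochastic term; this is legitimate by Fubini's theorem because $z \in \mathcal Z_{[0,T]}$ guarantees the relevant integrand is integrable, exactly as in the proof of Theorem \ref{existencestochstokes}. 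Everything else follows the template already established there.
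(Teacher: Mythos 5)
Your affine-linearity argument is sound and is essentially the paper's argument in different clothing: the paper shows that the defect process $y(\cdot;a u_1+b u_2,0)-a\,y(\cdot;u_1,0)-b\,y(\cdot;u_2,0)$ satisfies a closed stochastic-convolution identity and kills it with Gronwall, which is precisely the uniqueness argument you invoke. The estimate, however, contains a genuine gap. You claim the boundary term obeys the pointwise-in-$t$ bound
\begin{equation*}
 \mathbb E\left\|\int\limits_0^t A e^{-A(t-s)} D v(s)\,ds\right\|_H^2 \leq C\,\mathbb E\int\limits_0^t \|v(s)\|_{V^0(\partial\mathcal D)}^2\,ds
\end{equation*}
``via Young's inequality for convolutions''. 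This is false for $\beta\in(0,\tfrac14)$. After Lemma \ref{fractional}(iv) the kernel is $(t-s)^{\beta-1}$, and the inequality $\bigl(\int_0^t (t-s)^{\beta-1} g(s)\,ds\bigr)^2 \leq C\int_0^t g(s)^2\,ds$ fails: take $g$ equal to $1$ on $[t-\varepsilon,t]$ and $0$ elsewhere, so the left-hand side is $\varepsilon^{2\beta}/\beta^2$ while the right-hand side is $C\varepsilon$, and $\varepsilon^{2\beta-1}\rightarrow\infty$ as $\varepsilon\rightarrow 0$ since $2\beta<1$. (A Cauchy--Schwarz splitting would require $\int_0^t (t-s)^{2\beta-2}\,ds<\infty$, i.e.\ $\beta>\tfrac12$.) Young's inequality $L^1\ast L^2\subset L^2$ controls only the $L^2$-in-$t$ norm of the convolution over $[0,T]$, not its value at a fixed $t$. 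Hence your $a(t)$ is not a valid majorant, and the ``nondecreasing $a$'' form of Gronwall cannot be applied as you state.

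The repair is exactly the paper's route: keep the convolution term $C_2\,\mathbb E\bigl(\int_0^t (t-s)^{\beta-1}\|v_1(s)-v_2(s)\|_{V^0(\partial\mathcal D)}\,ds\bigr)^2$ inside $a(t)$ (which is then no longer monotone), apply the general form of Lemma \ref{gronwall}, namely $x(t)\leq a(t)+b\int_0^t e^{b(t-s)}a(s)\,ds$, integrate the resulting bound over $t\in[0,T]$, and only at that stage invoke Young's inequality for convolutions to bound $\int_0^T\bigl(\int_0^t (t-s)^{\beta-1}\|v(s)\|_{V^0(\partial\mathcal D)}\,ds\bigr)^2 dt$ by $(T^\beta/\beta)^2\,\int_0^T\|v(s)\|_{V^0(\partial\mathcal D)}^2\,ds$. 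Your remaining bounds (the $Bu$ term by Cauchy--Schwarz and boundedness of the semigroup, the stochastic term by the It\^o isometry and Fubini) are correct and coincide with the paper's.
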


\begin{proof}
 First, we show that $(y(t;u,v))_{t \in [0,T]}$ is affine linear with respect to $u \in U$.
 We assume that $\xi=0$ and $v=0$.
 Moreover, let $a,b \in \mathbb R$ and $u_1,u_2 \in U$.
 Recall that the operators $B \colon H \rightarrow H$ and $G \colon H \rightarrow \mathcal{L}_{(HS)}(Q^{1/2}(H);H)$ are linear and bounded.
 Moreover, we have $\left\| e^{-At} \right\|_{\mathcal L(H)} \leq 1$ for all $t \in [0,T]$. 
 Using the Itô isometry (\ref{isometry}) and Fubini's theorem, there exists a constant $C^*>0$ such that for $t \in [0,T]$
 \begin{align*}
  &\mathbb E \, \| y(t;a \, u_1 + b \, u_2,0) - a \, y(t;u_1,0) - b \, y(t;u_2,0) \|_H^2 \\
  &\leq \mathbb E \, \left\| \int\limits_0^t e^{-A (t-s)} G(y(t;a \, u_1 + b \, u_2,0) - a \, y(t;u_1,0) - b \, y(t;u_2,0)) d W(s) \right\|_H^2 \\
  &\leq C^* \int\limits_0^t \mathbb E \left\| y(s;a \, u_1 + b \, u_2,0) - a \, y(s;u_1,0) - b \, y(s;u_2,0)) \right\|_H^2 ds.
 \end{align*}
 By Lemma \ref{gronwall} and Fubini's theorem, we get
 \begin{equation*}
  \mathbb E \int\limits_0^T \| y(t;a \, u_1 + b \, u_2,0) - a \, y(t;u_1,0) - b \, y(t;u_2,0) \|_H^2 dt = 0.
 \end{equation*}
 We obtain that $(y(t;u,0))_{t \in [0,T]}$ with initial value $\xi = 0$ is linear with respect to $u \in U$.
 For arbitrary $\xi \in L^2(\Omega;H)$ and $v \in V$, we can conclude that $(y(t;u,v))_{t \in [0,T]}$ is affine linear with respect to $u \in U$.
 Similarly, we obtain that $(y(t;u,v))_{t \in [0,T]}$ is affine linear with respect to $v \in V$.
 
 Next, we show that inequality (\ref{statecontinuity}) holds.
 Let $u_1,u_2 \in U$ and $v_1,v_2 \in V$.
 Recall that $A^\alpha D \colon V^0(\partial \mathcal D) \rightarrow H$ is linear and bounded for all $\alpha \in \left(0,\frac{1}{4}\right)$.
 Due to the Itô isometry (\ref{isometry}), Lemma \ref{fractional} and Fubini's theorem, there exist constants $C_1,C_2,C_3>0$ such that for $t \in [0,T]$
 \begin{align*}
  \mathbb E \, \| y(t;u_1,v_1) - y(t;u_2,v_2) \|_H^2
  &\leq C_1 \, \mathbb E \int\limits_0^t \left\| u_1(s) - u_2(s) \right\|_H^2 ds + C_2 \, \mathbb E \left( \int\limits_0^t (t-s)^{\alpha-1} \left\| v_1(s) - v_2(s) \right\|_{V^0(\partial \mathcal D)} ds \right)^2 \\
  &\quad + C_3 \int\limits_0^t \mathbb E \, \| y(s;u_1,v_1) - y(s;u_2,v_2) \|_H^2 ds.
 \end{align*}
 Using Lemma \ref{gronwall}, Fubini's theorem and Young's inequality for convolutions, we get for $t \in [0,T]$
 \begin{align*}
  &\mathbb E \, \| y(t;u_1,v_1) - y(t;u_2,v_2) \|_H^2 \\
  &\leq C_1 \, \mathbb E \int\limits_0^t \left\| u_1(s) - u_2(s) \right\|_H^2 ds + C_2 \, \mathbb E \left( \int\limits_0^t (t-s)^{\alpha-1} \left\| v_1(s) - v_2(s) \right\|_{V^0(\partial \mathcal D)} ds \right)^2 \\
  &\quad + C_3 \int\limits_0^t e^{C_3(t-s)} \left[ C_1 \, \mathbb E \int\limits_0^s \left\| u_1(r) - u_2(r) \right\|_H^2 dr + C_2 \, \mathbb E \left( \int\limits_0^s (s-r)^{\alpha-1} \left\| v_1(r) - v_2(r) \right\|_{V^0(\partial \mathcal D)} dr \right)^2 \right] ds \\
  &\leq C_1 \left(1 + C_3 e^{C_3 t}\right) \, \mathbb E \int\limits_0^t \left\| u_1(s) - u_2(s) \right\|_H^2 ds + C_2 \, \mathbb E \left( \int\limits_0^t (t-s)^{\alpha-1} \left\| v_1(s) - v_2(s) \right\|_{V^0(\partial \mathcal D)} ds \right)^2 \\
  &\quad + \frac{C_2 C_3 e^{C_3 t} t^{2\alpha}}{\alpha^2} \; \mathbb E \int\limits_0^t \left\| v_1(s) - v_2(s) \right\|_{V^0(\partial \mathcal D)}^2 ds.
 \end{align*}
 By Fubini's theorem and Young's inequality for convolutions, there exists a constant $\widehat C>0$ such that
 \begin{align*}
  &\mathbb E \int\limits_0^T \| y(t;u_1,v_1) - y(t;u_2,v_2) \|_H^2 dt \\
  &\leq \int\limits_0^T \left[ C_1 \left(1 + C_3 e^{C_3 t}\right) \, \mathbb E \int\limits_0^t \left\| u_1(s) - u_2(s) \right\|_H^2 ds + C_3 e^{C_3 t} \frac{C_2 t^{2\alpha}}{\alpha^2} \; \mathbb E \int\limits_0^t \left\| v_1(s) - v_2(s) \right\|_{V^0(\partial \mathcal D)}^2 ds\right] dt \\
  &\quad + C_2 \, \mathbb E \int\limits_0^T \left( \int\limits_0^t (t-s)^{\alpha-1} \left\| v_1(s) - v_2(s) \right\|_{V^0(\partial \mathcal D)} ds \right)^2 dt \\
  &\leq \widehat C \left[ \mathbb E \int\limits_0^T \left\| u_1(t) - u_2(t) \right\|_H^2 dt + \mathbb E \int\limits_0^T \left\| v_1(t) - v_2(t) \right\|_{V^0(\partial \mathcal D)}^2 dt \right].
 \end{align*}
\end{proof}

\section{The Control Problem}

The control problem considered in this paper is motivated by \cite{apfo,mawr,epfa,fbso,ocot}.
In this section, we state first order optimality conditions, which are necessary and sufficient.
Moreover, we derive a duality principle such that we can deduce explicit formulas the optimal controls have to satisfy.

Let us introduce the following cost functional:
\begin{equation}\label{costfunctional}
 J(u,v) = \frac{1}{2} \, \mathbb E \int\limits_0^T \| y(t;u,v) - y_d(t) \|_H^2 dt + \frac{\kappa_1}{2} \, \mathbb E \int\limits_0^T \left\| u(t) \right\|_H^2 dt + \frac{\kappa_2}{2} \, \mathbb E \int\limits_0^T \left\| v(t) \right\|_{V^0(\partial \mathcal D)}^2 dt,
\end{equation}
where $(y(t;u,v))_{t \in [0,T]}$ is the mild solution of system (\ref{stochstokes}) corresponding to the controls $u \in U$ and $v \in V$.
The function $y_d \in L^2([0,T];H)$ is a given desired velocity field and $\kappa_1,\kappa_2 > 0$ are weights.
The task is to find controls $\overline u \in U$ and $\overline v \in V$ such that
\begin{equation*}
 J(\overline u,\overline v) = \inf_{u \in U, v \in V} J(u,v).
\end{equation*}
The controls $\overline u \in U$ and $\overline v \in V$ are called optimal controls.
Note that the control problem is formulated as an unbounded optimization problem constrained by a SPDE.
The functional $J \colon U \times V \rightarrow \mathbb R$ given by equation (\ref{costfunctional}) is continuous, coercive and strictly convex, which is a consequence of Corollary \ref{stateproperties}.
Hence, we get the existence and uniqueness of optimal controls.
For more details, we refer to \cite{cfa,nfaa}.

\subsection{Necessary and Sufficient Optimality Conditions}

First, let us introduce the following systems:
\begin{equation}\label{frechet1}
 \left\{
 \begin{aligned}
  d z_1(t) &= \left[- A z_1(t) + B u(t) \right] dt + G(z_1(t)) d W(t), \\
  z_1(0) &= 0,
 \end{aligned}
 \right.
\end{equation}
\begin{equation}\label{frechet2}
 \left\{
 \begin{aligned}
  d z_2(t) &= \left[- A z_2(t) + A D v(t) \right] dt + G(z_2(t)) d W(t), \\
  z_2(0) &= 0,
 \end{aligned}
 \right.
\end{equation}
where $u \in U$, $v \in V$ and $(W(t))_{t \in [0,T]}$ is a Q-Wiener process with values in $H$ and covariance operator $Q \in \mathcal L(H)$.
The operators $A,B,D,G$ and the spaces $U,V$ are introduced in Section \ref{sec:preliminaries} and Section \ref{sec:stochasticstokes}, respectively.

\begin{definition}
 a) A predictable process $(z_1(t))_{t \in [0,T]}$ with values in $H$ is called a \textbf{mild solution of system (\ref{frechet1})} if
 \begin{equation}\label{regularity}
  \mathbb E \int\limits_0^T \| z_1(t)\|_H^2 dt < \infty,
 \end{equation}
 and we have for $t \in [0,T]$ and $\mathbb P$-a.s.
 \begin{equation*}
  z_1(t) = \int\limits_0^t e^{-A (t-s)} B u(s) ds + \int\limits_0^t e^{-A (t-s)}G(z_1(s)) d W(s).
 \end{equation*}
 b) A predictable process $(z_2(t))_{t \in [0,T]}$ with values in $H$ is called a \textbf{mild solution of system (\ref{frechet2})} if
 \begin{equation*}
  \mathbb E \int\limits_0^T \| z_2(t)\|_H^2 dt < \infty,
 \end{equation*}
 and we have for $t \in [0,T]$ and $\mathbb P$-a.s.
 \begin{equation*}
  z_2(t) = \int\limits_0^t A e^{-A (t-s)} D v(s) ds + \int\limits_0^t e^{-A (t-s)}G(z_2(s)) d W(s).
 \end{equation*}
\end{definition}

Existence and uniqueness results of mild solutions to system (\ref{frechet1}) and system (\ref{frechet2}) can be obtained similarly to Theorem \ref{existencestochstokes}.
For stronger regularity properties of the mild solution to system (\ref{frechet1}), we refer to \cite{seiid,sdei}.
However, we assume that the weaker condition (\ref{regularity}) holds.
To illustrate the dependence on the controls $u \in U$ and $v \in V$, let us denote by $(z_1(t;u))_{t \in [0,T]}$ and $(z_2(t;v))_{t \in [0,T]}$ the mild solutions of system (\ref{frechet1}) and system (\ref{frechet2}), respectively.
Whenever these processes are considered for fixed controls, we omit the dependency.
Similarly to Corollary \ref{stateproperties}, we get the following result.

\begin{lemma}\label{frechetproperties}
 Let $(z_1(t;u))_{t \in [0,T]}$ and $(z_2(t;v))_{t \in [0,T]}$ be the mild solutions of system (\ref{frechet1}) and system (\ref{frechet2}) corresponding to the controls $u \in U$ and $v \in V$, respectively.
 Then the process $(z_1(t;u))_{t \in [0,T]}$ is linear with respect to $u$ and the process $(z_2(t;v))_{t \in [0,T]}$ is linear with respect to $v$.
 Moreover, we have for every $u_1,u_2 \in U$ and every $v_1,v_2 \in V$
 \begin{align*}
  &\mathbb E \int\limits_0^T \| z_1(t;u_1) - z_1(t;u_2)\|_H^2 \leq \widehat C \, \mathbb E \int\limits_0^T \left\| u_1(t) - u_2(t) \right\|_H^2 dt, \\
  &\mathbb E \int\limits_0^T \| z_2(t;v_1) - z_2(t;v_2)\|_H^2 \leq \widehat C \, \mathbb E \int\limits_0^T \left\| v_1(t) - v_2(t) \right\|_{V^0(\partial \mathcal D)}^2 dt,
 \end{align*}
 where $\widehat C > 0$ is a constant.
\end{lemma}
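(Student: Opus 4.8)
The plan is to mimic the two-part structure of the proof of Corollary \ref{stateproperties}, but in a simpler setting because systems (\ref{frechet1}) and (\ref{frechet2}) have zero initial value and are each driven by a single control. First I would establish linearity. For $(z_1(t;u))_{t \in [0,T]}$, fix $a,b \in \mathbb R$ and $u_1,u_2 \in U$, and consider the process $r(t) = z_1(t;a u_1 + b u_2) - a\, z_1(t;u_1) - b\, z_1(t;u_2)$. Since the drift term $Bu$ enters linearly and $z_1(0)=0$, the deterministic convolution contributions cancel exactly, so $r$ satisfies the homogeneous mild equation $r(t) = \int_0^t e^{-A(t-s)} G(r(s))\, dW(s)$, using linearity of $G$. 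Applying the Itô isometry (\ref{isometry}), the bound $\|e^{-At}\|_{\mathcal L(H)} \leq 1$, boundedness of $G$, and Fubini's theorem gives $\mathbb E\|r(t)\|_H^2 \leq C^* \int_0^t \mathbb E\|r(s)\|_H^2\, ds$, whence Lemma \ref{gronwall} (with $a \equiv 0$) forces $\mathbb E \int_0^T \|r(t)\|_H^2\, dt = 0$, i.e. $z_1$ is linear in $u$. The argument for $z_2$ in $v$ is identical, now with the drift term $A D v$, which is again linear in $v$, and the kernel $A e^{-A(t-s)} D$ replaced appropriately.

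Next I would prove the two continuity estimates. Fix $u_1,u_2 \in U$; by linearity it suffices to bound $z_1(t;u_1-u_2)$ in terms of $u_1-u_2$. From the mild formulation,
\begin{equation*}
 z_1(t;u_1) - z_1(t;u_2) = \int_0^t e^{-A(t-s)} B(u_1(s)-u_2(s))\, ds + \int_0^t e^{-A(t-s)} G(z_1(s;u_1)-z_1(s;u_2))\, dW(s).
\end{equation*}
Taking $\mathbb E\|\cdot\|_H^2$, using $(a+b)^2 \leq 2a^2 + 2b^2$, the Cauchy--Schwarz inequality on the Bochner integral, $\|e^{-At}\|_{\mathcal L(H)} \leq 1$, boundedness of $B$, the Itô isometry (\ref{isometry}), boundedness of $G$, and Fubini's theorem, I obtain constants $C_1, C_3 > 0$ with
\begin{equation*}
 \mathbb E\|z_1(t;u_1)-z_1(t;u_2)\|_H^2 \leq C_1\, \mathbb E \int_0^t \|u_1(s)-u_2(s)\|_H^2\, ds + C_3 \int_0^t \mathbb E\|z_1(s;u_1)-z_1(s;u_2)\|_H^2\, ds.
\end{equation*}
Since $t \mapsto C_1\, \mathbb E \int_0^t \|u_1-u_2\|_H^2$ is nondecreasing, Lemma \ref{gronwall} yields $\mathbb E\|z_1(t;u_1)-z_1(t;u_2)\|_H^2 \leq C_1 e^{C_3 t}\, \mathbb E \int_0^T \|u_1(s)-u_2(s)\|_H^2\, ds$, and integrating over $t \in [0,T]$ gives the first inequality with $\widehat C = C_1 T e^{C_3 T}$. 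For $z_2$, the only change is that the first integral has kernel $A e^{-A(t-s)} D = A^{1-\beta} e^{-A(t-s)} A^\beta D$ for some $\beta \in (0,\tfrac14)$; by Lemma \ref{fractional}(iv) this is dominated by $M_{1-\beta}(t-s)^{-(1-\beta)} \|A^\beta D v(s)\|_H$ with $A^\beta D$ bounded from $V^0(\partial\mathcal D)$ into $H$, so the singular kernel $(t-s)^{-(1-\beta)}$ is integrable and Young's inequality for convolutions (exactly as in the $\psi_2$ estimate in the proof of Theorem \ref{existencestochstokes}) converts the time integral of the squared convolution into $C_2\, \mathbb E \int_0^T \|v_1(t)-v_2(t)\|_{V^0(\partial\mathcal D)}^2\, dt$; then Lemma \ref{gronwall} and one more application of Young's inequality close the estimate.

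The main obstacle, if any, is the bookkeeping with the singular convolution kernel in the $z_2$ estimate: one must keep the fractional power split so that $A^{1-\beta} e^{-A(t-s)}$ carries the integrable singularity $(t-s)^{\beta-1}$ while $A^\beta D$ remains a bounded operator, and then apply Young's inequality for convolutions with the $L^1$ kernel $t^{\beta-1}$ on $[0,T]$ before invoking Gronwall. This is precisely the mechanism already used for $\psi_2$ in Theorem \ref{existencestochstokes} and for the $v$-dependent term in Corollary \ref{stateproperties}, so no genuinely new difficulty arises; everything else reduces to the Itô isometry, boundedness of $B$ and $G$, Fubini's theorem, and the integrable-kernel version of Gronwall's inequality stated in Lemma \ref{gronwall}.
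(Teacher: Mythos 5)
Your proposal is correct and takes essentially the same route as the paper, which gives no separate argument for this lemma but simply asserts it follows "similarly to Corollary \ref{stateproperties}"; your write-up is precisely that adaptation (homogeneous equation plus Gronwall for linearity, then the Itô isometry, the fractional-power split $A e^{-A(t-s)}D = A^{1-\beta}e^{-A(t-s)}A^{\beta}D$, Young's inequality for convolutions, and Gronwall for the continuity bounds). No gaps.
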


Next, we calculate the Fr\'echet derivative of the mild solution to system (\ref{stochstokes}).
Let $X,Y$ and $Z$ be arbitrary Banach spaces.
For a mapping $f \colon M_X \times M_Y \rightarrow Z$ with $M_X \subset X$, $M_Y \subset Y$ nonempty and open, the (partial) Fr\'echet derivative at $x \in M_X$ in direction $h \in X$ for fixed $y \in Y$ is denoted by $d_x f(x,y) [h]$.
Analogously, the (partial) Fr\'echet derivative at $y \in M_Y$ in direction $h \in Y$ for fixed $x \in X$ is denoted by $d_y f(x,y) [h]$.
We get the following result.

\begin{theorem}
 Let $(y(t;u,v))_{t \in [0,T]}$, $(z_1(t;u))_{t \in [0,T]}$ and $(z_2(t;v))_{t \in [0,T]}$ be the mild solutions of systems (\ref{stochstokes}), (\ref{frechet1}) and (\ref{frechet2}) corresponding to the controls $u \in U$ and $v \in V$, respectively.
 Then the Fr\'echet derivative of $y(t;u,v)$ at $u \in U$ in direction $\tilde u \in U$ satisfies for fixed $v \in V$, $t \in [0,T]$ and $\mathbb P$-a.s.
 \begin{equation*}
  d_u y(t;u,v) [\tilde u] = z_1(t;\tilde u).
 \end{equation*}
 The Fr\'echet derivative of $y(t;u,v)$ at $v \in V$ in direction $\tilde v \in V$ satisfies for fixed $u \in U$, $t \in [0,T]$ and $\mathbb P$-a.s.
 \begin{equation*}
  d_v y(t;u,v) [\tilde v] = z_2(t;\tilde v).
 \end{equation*}
\end{theorem}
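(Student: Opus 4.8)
The plan is to exploit the affine linearity of the state map from Corollary \ref{stateproperties} together with the uniqueness of mild solutions, so that the increment in the state is \emph{exactly} the solution of the linearized system, leaving no remainder to estimate. Fix $v \in V$ and $u,\tilde u \in U$, and consider the process $w(t) := y(t;u+\tilde u,v) - y(t;u,v)$. Subtracting the mild-solution identities for $y(\cdot;u+\tilde u,v)$ and $y(\cdot;u,v)$, the terms $e^{-At}\xi$ and $\int_0^t A e^{-A(t-s)} D v(s)\,ds$ cancel, the distributed-control terms combine by linearity of $B$ to $\int_0^t e^{-A(t-s)} B\tilde u(s)\,ds$, and the stochastic integrands combine by linearity of $G$ to $G(w(s))$. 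Hence $w$ satisfies, for $t\in[0,T]$ and $\mathbb P$-a.s.,
\[
 w(t) = \int_0^t e^{-A(t-s)} B\tilde u(s)\,ds + \int_0^t e^{-A(t-s)} G(w(s))\,dW(s),
\]
which is precisely the defining equation of a mild solution of system (\ref{frechet1}) with control $\tilde u$; moreover $w$ inherits condition (\ref{regularity}) from (\ref{solutionregularity}) with $\alpha=0$. By the uniqueness part of the existence result for (\ref{frechet1}) (obtained as in Theorem \ref{existencestochstokes}), we conclude $w(t) = z_1(t;\tilde u)$ for all $t\in[0,T]$, $\mathbb P$-a.s.

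Next I would upgrade this exact increment formula to a statement about the Fréchet derivative. Regarding $y$ as a map from $U\times V$ into $L^2(\Omega;L^2([0,T];H))$, the previous step shows that, for fixed $v$, the increment $y(\cdot;u+\tilde u,v) - y(\cdot;u,v) = z_1(\cdot;\tilde u)$ does not depend on $u$ and, by Lemma \ref{frechetproperties}, depends linearly on $\tilde u$; the same lemma (with one control set to $0$) gives $\mathbb E\int_0^T \|z_1(t;\tilde u)\|_H^2\,dt \le \widehat C\,\mathbb E\int_0^T \|\tilde u(t)\|_H^2\,dt$, so $\tilde u\mapsto z_1(\cdot;\tilde u)$ is a bounded linear operator between the relevant Hilbert spaces. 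Since
\[
 y(\cdot;u+\tilde u,v) - y(\cdot;u,v) - z_1(\cdot;\tilde u) = 0
\]
identically, the remainder is trivially $o(\|\tilde u\|_U)$, so the Fréchet derivative exists and equals this operator; as the identity holds pointwise $\mathbb P$-a.s., we obtain $d_u y(t;u,v)[\tilde u] = z_1(t;\tilde u)$ for $t\in[0,T]$ and $\mathbb P$-a.s.

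Finally, the argument for the derivative with respect to $v$ is entirely symmetric: with $u$ fixed and $v,\tilde v\in V$, the process $y(\cdot;u,v+\tilde v) - y(\cdot;u,v)$ satisfies the mild-solution equation of system (\ref{frechet2}) with control $\tilde v$ (now the term $\int_0^t A e^{-A(t-s)} D\tilde v(s)\,ds$ survives instead of the $B$-term), so by uniqueness it equals $z_2(\cdot;\tilde v)$, and the continuity estimate in Lemma \ref{frechetproperties} together with the vanishing remainder yields $d_v y(t;u,v)[\tilde v] = z_2(t;\tilde v)$. I do not expect a genuine obstacle; the only points requiring care are invoking the \emph{linearity} of $G$ and $B$ to identify the increment equation, and recording that the candidate derivative is a bounded linear operator, so that the conclusion is a Fréchet (and not merely Gâteaux) derivative.
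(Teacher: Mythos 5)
Your proposal is correct and follows essentially the same route as the paper: both hinge on the exact cancellation of the initial-value, boundary and distributed drift terms so that the increment minus the linearization satisfies a homogeneous stochastic convolution equation driven by $G$ applied to itself, which forces it to vanish, and both then record linearity and boundedness via Lemma \ref{frechetproperties} so that the (identically zero) remainder yields a genuine Fr\'echet derivative. The only cosmetic difference is that you identify the increment $y(\cdot;u+\tilde u,v)-y(\cdot;u,v)$ with $z_1(\cdot;\tilde u)$ by invoking uniqueness of mild solutions of (\ref{frechet1}), whereas the paper runs the It\^o-isometry/Gronwall estimate directly on the triple difference $y(\cdot;u+\tilde u,v)-y(\cdot;u,v)-z_1(\cdot;\tilde u)$; these are equivalent, since uniqueness itself rests on the same contraction/Gronwall argument.
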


\begin{proof}
 First, we calculate the Fr\'echet derivative of $y(t;u,v)$ at $u \in U$ in direction $\tilde u \in U$.
 Let $v \in V$ be fixed.
 Recall that the operators $B \colon H \rightarrow H$ and $G \colon H \rightarrow \mathcal{L}_{(HS)}(Q^{1/2}(H);H)$ are linear and bounded.
 Moreover, we have $\left\| e^{-At} \right\|_{\mathcal L(H)} \leq 1$ for all $t \in [0,T]$.
 Using the Itô isometry (\ref{isometry}) and Fubini's theorem, there exists a constant $C^* > 0$ such that for $t \in [0,T]$
 \begin{align*}
  \mathbb E \, \| y(t;u + \tilde u,v) - y(t;u,v) - z_1(t;\tilde u)\|_H^2 
  &= \mathbb E \left\| \int\limits_0^t e^{-A (t-s)} G(y(s;u + \tilde u,v) - y(s;u,v) - z_1(s;\tilde u)) d W(s) \right\|_H^2 \\
  &\leq C^* \int\limits_0^t \mathbb E \, \left\| y(s;u + \tilde u,v) - y(s;u,v) - z_1(s;\tilde u) \right\|_H^2 ds.
 \end{align*}
 By Lemma \ref{gronwall} and Fubini's theorem, we get
 \begin{equation*}
  \mathbb E \int\limits_0^T \| y(t;u + \tilde u,v) - y(t;u,v) - z_1(t;\tilde u)\|_H^2 dt = 0.
 \end{equation*}
 Hence, the Fr\'echet derivative of $y(t;u,v)$ at $u \in U$ in direction $\tilde u \in U$ satisfies for every $v \in V$, $t \in [0,T]$ and $\mathbb P$-a.s.
 \begin{equation*}
  d_u y(t;u,v) [\tilde u] = z_1(t;\tilde u).
 \end{equation*}
 Due to Lemma \ref{frechetproperties}, the operator $d_u y(t;u,v)$ is linear and bounded on $U$.
 Similarly, we obtain the Fr\'echet derivative of $y(t;u,v)$ at $v \in V$ in direction $\tilde v \in V$.
\end{proof}

As a direct consequence of the previous theorem and the chain rule for Fr\'echet derivatives, we get the following result.

\begin{theorem}\label{costfunctionalfrechet}
 Let the functional $J \colon U \times V \rightarrow \mathbb R$ be defined by (\ref{costfunctional}).
 Then the Fr\'echet derivative at $u \in U$ in direction $\tilde u \in U$ for fixed $v \in V$ satisfies 
 \begin{equation*}
  d_u J(u,v)[\tilde u] = \mathbb E \int\limits_0^T \left\langle y(t;u,v) -y_d(t), z_1(t;\tilde u) \right\rangle_H dt + \kappa_1\, \mathbb E \int\limits_0^T \left\langle u(t), \tilde u(t) \right\rangle_H dt,
 \end{equation*}
 where $(z_1(t;\tilde u))_{t \in [0,T]}$ is the mild solution of system (\ref{frechet1}) corresponding to the control $\tilde u \in U$.
 The Fr\'echet derivative at $v \in V$ in direction $\tilde v \in V$ for fixed $u \in U$ satisfies 
 \begin{equation*}
  d_v J(u,v)[\tilde v] = \mathbb E \int\limits_0^T \left\langle y(t;u,v) -y_d(t), z_2(t;\tilde v) \right\rangle_H dt + \kappa_2\, \mathbb E \int\limits_0^T \left\langle v(t), \tilde v(t) \right\rangle_{V^0(\partial \mathcal D)} dt,
 \end{equation*}
 where $(z_2(t;\tilde v))_{t \in [0,T]}$ is the mild solution of system (\ref{frechet2}) corresponding to the control $\tilde v \in V$.
\end{theorem}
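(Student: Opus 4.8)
The plan is to write $J(\cdot,v)$ as a sum of three terms, differentiate each separately with respect to $u$, and invoke the chain rule for Fréchet derivatives together with the preceding theorem; the argument for $d_v J$ is then obtained by the obvious symmetry. First I would observe that the map $\Lambda_v \colon U \rightarrow L^2(\Omega;L^2([0,T];H))$ defined by $\Lambda_v(u) = y(\cdot;u,v)$ is affine by Corollary \ref{stateproperties}, and that the previous theorem identifies its Fréchet derivative as $d_u y(\cdot;u,v)[\tilde u] = z_1(\cdot;\tilde u)$; this derivative is a \emph{bounded} linear operator from $U$ into $L^2(\Omega;L^2([0,T];H))$, the boundedness being exactly the estimate in Lemma \ref{frechetproperties} (applied with $u_2 = 0$). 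Next I would treat the tracking functional $N \colon L^2(\Omega;L^2([0,T];H)) \rightarrow \mathbb R$, $N(y) = \frac12 \mathbb E \int_0^T \|y(t) - y_d(t)\|_H^2 dt$, where $y_d \in L^2([0,T];H)$ is regarded as a deterministic element of $L^2(\Omega;L^2([0,T];H))$. Writing $N(y) = \frac12 \|y\|^2 - \langle y, y_d\rangle + \frac12\|y_d\|^2$ in the Hilbert space $\mathcal H = L^2(\Omega;L^2([0,T];H))$, one sees at once that $N$ is Fréchet differentiable with $dN(y)[h] = \langle y - y_d, h\rangle_{\mathcal H} = \mathbb E \int_0^T \langle y(t) - y_d(t), h(t)\rangle_H dt$, the remainder being $\frac12\|h\|_{\mathcal H}^2 = o(\|h\|_{\mathcal H})$.

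Then I would apply the chain rule: the first term of $J(\cdot,v)$ is $N \circ \Lambda_v$, so its partial Fréchet derivative at $u$ in direction $\tilde u$ equals $dN(\Lambda_v(u))\big[d_u y(\cdot;u,v)[\tilde u]\big] = \mathbb E \int_0^T \langle y(t;u,v) - y_d(t), z_1(t;\tilde u)\rangle_H dt$. The second term, $u \mapsto \frac{\kappa_1}{2}\mathbb E \int_0^T \|u(t)\|_H^2 dt$, is a quadratic functional on the Hilbert space $U$ and hence has Fréchet derivative $\kappa_1 \mathbb E \int_0^T \langle u(t), \tilde u(t)\rangle_H dt$; the third term does not depend on $u$, so its partial $u$-derivative vanishes. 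Summing the three contributions gives the asserted formula for $d_u J(u,v)[\tilde u]$. For $d_v J(u,v)[\tilde v]$ the same scheme applies with $\Lambda_v$ replaced by the affine map $v \mapsto y(\cdot;u,v)$, whose derivative in direction $\tilde v$ is $z_2(\cdot;\tilde v)$ by the previous theorem (and bounded $V \rightarrow \mathcal H$ by Lemma \ref{frechetproperties}), while the quadratic penalty $\frac{\kappa_2}{2}\mathbb E \int_0^T \|v(t)\|_{V^0(\partial \mathcal D)}^2 dt$ contributes $\kappa_2 \mathbb E \int_0^T \langle v(t), \tilde v(t)\rangle_{V^0(\partial \mathcal D)} dt$ because $V^0(\partial \mathcal D)$ is a Hilbert space.

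I do not expect a genuine obstacle in this proof: every map in sight is either affine or quadratic on a Hilbert space, so differentiability and the explicit form of the derivatives are immediate, and the compositions are legitimate precisely because $d_u y(\cdot;u,v)$ and $d_v y(\cdot;u,v)$ are bounded linear operators between the relevant spaces — a fact furnished verbatim by Lemma \ref{frechetproperties}. The only minor point worth spelling out is the identification of $y_d \in L^2([0,T];H)$ with a constant (in $\omega$) element of $L^2(\Omega;L^2([0,T];H))$, so that $N$ is well defined and differentiable on that space.
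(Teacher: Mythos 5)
Your proposal is correct and follows essentially the same route as the paper, which simply states the result as a direct consequence of the preceding theorem (identifying $d_u y(\cdot;u,v)[\tilde u]=z_1(\cdot;\tilde u)$ and $d_v y(\cdot;u,v)[\tilde v]=z_2(\cdot;\tilde v)$) combined with the chain rule for Fr\'echet derivatives. You merely spell out the details the paper leaves implicit: the differentiation of the quadratic tracking and penalty terms on the relevant Hilbert spaces and the boundedness of the inner derivative supplied by Lemma \ref{frechetproperties}.
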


As a result of the previous theorem and the fact that the cost functional $J \colon U \times V \rightarrow \mathbb R$ given by (\ref{costfunctional}) is strictly convex, the optimal controls $\overline u \in U$ and $\overline v \in V$ satisfy the following necessary and sufficient optimality conditions:
\begin{align}
 d_u J(\overline u,\overline v)[\tilde u] &= 0, \label{optcondition1}\\
 d_v J(\overline u,\overline v)[\tilde v] &= 0 \label{optcondition2}
\end{align}
for every $\tilde u \in U$ and every $\tilde v \in V$.
For more details about optimality conditions of convex differentiable functionals, we refer to \cite{cfa,nfaa}.
Next, we use the optimality conditions (\ref{optcondition1}) and (\ref{optcondition2}) to derive explicit formulas for the optimal controls $\overline u \in U$ and $\overline v \in V$.
Therefor, we need a duality principle, which gives us a relation between the Fr\'echet derivatives of the mild solution to system (\ref{stochstokes}) and the adjoint equation, which is given by a backward SPDE.

\subsection{The Adjoint Equation}\label{sec:adjoint}

We introduce the following backward SPDE:
\begin{equation}\label{backwardstochstokes}
 \left\{
 \begin{aligned}
  d z^*(t) &= -[ -A z^*(t) + G^*(\Phi(t)) + y(t) - y_d(t)] dt + \Phi(t) d W(t), \\
  z^*(T) &= 0,
 \end{aligned}
 \right.
\end{equation}
where $(y(t))_{t \in [0,T]}$ is the mild solution of system (\ref{stochstokes}) and $y_d \in L^2([0,T];H)$ is the desired velocity field.
The process $(W(t))_{t \in [0,T]}$ is a Q-Wiener process with values in $H$ and covariance operator $Q \in \mathcal L(H)$ and the operator $G^* \colon \mathcal{L}_{(HS)}(Q^{1/2}(H);H) \rightarrow H$ is linear and bounded.
A precise meaning is given in the following remark.

\begin{remark}\label{backwardremarks}
 Since the operator $G \colon H \rightarrow \mathcal{L}_{(HS)}(Q^{1/2}(H);H)$ is linear and bounded, there exists a linear and bounded operator $G^* \colon \mathcal{L}_{(HS)}(Q^{1/2}(H);H) \rightarrow H$ satisfying for every $h \in H$ and every $\Phi \in \mathcal{L}_{(HS)}(Q^{1/2}(H);H)$
 \begin{equation}\label{adjoint}
  \langle G(h), \Phi \rangle_{\mathcal{L}_{(HS)}(Q^{1/2}(H);H)} = \langle h, G^*(\Phi) \rangle_H.
 \end{equation}
\end{remark}

\begin{definition}
 A pair of predictable processes $(z^*(t),\Phi(t))_{t \in [0,T]}$ with values in $H \times \mathcal{L}_{(HS)}(Q^{1/2}(H);H)$ is called a \textbf{mild solution of system (\ref{backwardstochstokes})} if
 \begin{align*}
  &\sup_{t \in [0,T]} \mathbb E \, \|z^*(t)\|_H^2 < \infty,
  &\mathbb E \int\limits_0^T \| \Phi(t)\|_{\mathcal{L}_{(HS)}(Q^{1/2}(H);H)}^2 dt < \infty,
 \end{align*}
 and we have for all $t \in [0,T]$ and $\mathbb P$-a.s.
 \begin{align*}
  z^*(t) = \int\limits_t^T e^{-A (s-t)} G^*(\Phi(s)) ds + \int\limits_t^T e^{-A (s-t)} \left( y(s) - y_d(s)\right) ds - \int\limits_t^T e^{-A (s-t)} \Phi(s) d W(s).
 \end{align*}
\end{definition}

An existence and uniqueness result is mainly based on the following lemma.

\begin{lemma}[Lemma 2.1,{\cite{asoa}}]\label{backwardlemma}
 Let $z \in L^2(\Omega;H)$ be $\mathcal F_T$-measurable and let $(f(t))_{t \in [0,T]}$ be a predictable process with values in $H$ such that $\mathbb E \int_0^T \|f(t)\|_H^2 dt < \infty$.
 Then there exists a unique pair of predictable processes $(\varphi(t),\phi(t))_{t \in [0,T]}$ with values in $H \times \mathcal{L}_{(HS)}(Q^{1/2}(H);H)$ such that for all $t \in [0,T]$ and $\mathbb P$-a.s.
 \begin{equation*}
  \varphi(t) = e^{-A (T-t)} z + \int\limits_t^T e^{-A (s-t)} f(s) ds - \int\limits_t^T e^{-A (s-t)} \phi(s) d W(s).
 \end{equation*}
 Moreover, there exists a constant $c>0$ such that for all $t \in [0,T]$
 \begin{align}
  &\mathbb E \, \|\varphi(t)\|_H^2 \leq c \left[ \mathbb E \, \| z \|_H^2 + (T-t) \, \mathbb E \int\limits_t^T \| f(s) \|_H^2 ds\right], \label{backwardinequality1}\\
  &\mathbb E \int\limits_t^T \| \phi(s) \|_{\mathcal{L}_{(HS)}(Q^{1/2}(H);H)}^2 ds \leq c \left[ \mathbb E \, \| z \|_H^2 + (T-t) \, \mathbb E \int\limits_t^T \| f(s) \|_H^2 ds\right]. \label{backwardinequality2}
 \end{align}
\end{lemma}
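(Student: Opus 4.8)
The plan is to establish uniqueness by a conditioning argument and existence by a finite–dimensional (Galerkin) approximation of $A$ along its eigenbasis, which reduces the problem to a backward equation with a bounded generator, where the martingale representation theorem applies directly. For uniqueness, if $(\varphi_1,\phi_1)$ and $(\varphi_2,\phi_2)$ both satisfy the integral equation, then the difference obeys $\varphi_1(t)-\varphi_2(t)=-\int_t^T e^{-A(s-t)}\big(\phi_1(s)-\phi_2(s)\big)\,dW(s)$; conditioning on $\mathcal F_t$ gives $\varphi_1(t)=\varphi_2(t)$ $\mathbb P$-a.s.\ for every $t$, whence $t\mapsto\int_0^t e^{-As}\big(\phi_1(s)-\phi_2(s)\big)\,dW(s)$ is constant and, by the It\^o isometry \eqref{isometry} together with the injectivity of $e^{-As}$ (recall $-A$ is self adjoint), $\phi_1=\phi_2$ a.e.

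For existence the obstruction is that $e^{-A(T-t)}z$ depends on $t$ through the semigroup, so $\mathbb E\big[e^{-A(T-t)}z+\int_t^T e^{-A(s-t)}f(s)\,ds\mid\mathcal F_t\big]$ is not a martingale and Proposition \ref{martingalerepresentation} is not directly applicable. I would let $P_N$ be the orthogonal projection of $H$ onto the span of the first $N$ eigenvectors of $A$, set $A_N=P_NAP_N$, $z_N=P_Nz$, $f_N=P_Nf$, and work in the finite–dimensional space $P_NH$, on which $-A_N$ is bounded, $e^{-A_Nt}$ extends to a group and $\|e^{-A_Nt}\|_{\mathcal L(H)}\le1$ for $t\ge0$. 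Multiplying the associated linear backward equation by $e^{A_Nt}$ shows that
\[
e^{A_Nt}\varphi_N(t)+\int_0^t e^{-A_Ns}f_N(s)\,ds=\mathbb E\Big[e^{-A_NT}z_N+\int_0^T e^{-A_Ns}f_N(s)\,ds\ \Big|\ \mathcal F_t\Big]=:M_N(t)
\]
is a continuous square–integrable $\mathcal F_t$-martingale, so Proposition \ref{martingalerepresentation} yields a predictable $\Psi_N$ with $M_N(t)=\mathbb E M_N(0)+\int_0^t\Psi_N(s)\,dW(s)$; putting $\varphi_N(t):=e^{-A_Nt}\big(M_N(t)-\int_0^t e^{-A_Ns}f_N(s)\,ds\big)$ and $\phi_N(t):=e^{A_Nt}\Psi_N(t)$ and reversing the computation gives an adapted square–integrable pair solving the integral equation with $(A,z,f)$ replaced by $(A_N,z_N,f_N)$.

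Since $-A_N$ is bounded, $\varphi_N$ is a genuine It\^o process and It\^o's formula applies to $\|\varphi_N(t)\|_H^2$; integrating from $t$ to $T$, the term $\langle\varphi_N(s),A_N\varphi_N(s)\rangle_H\ge0$ enters with a favourable sign, and combining this with the It\^o isometry \eqref{isometry}, Cauchy--Schwarz and Lemma \ref{gronwall} produces \eqref{backwardinequality1} and \eqref{backwardinequality2} for $(\varphi_N,\phi_N)$ with a constant independent of $N$. Applying the same computation to $\varphi_N-\varphi_M$, whose data $(P_N-P_M)z$, $(P_N-P_M)f$ vanish in $L^2(\Omega;H)$ and $L^2(\Omega\times[0,T];H)$ as $N,M\to\infty$, shows that $(\varphi_N,\phi_N)_N$ is Cauchy in the space of pairs with $\sup_t\mathbb E\|\varphi(t)\|_H^2+\mathbb E\int_0^T\|\phi(t)\|_{\mathcal{L}_{(HS)}(Q^{1/2}(H);H)}^2\,dt<\infty$; its limit $(\varphi,\phi)$ satisfies the integral equation after passing to the limit (using $e^{-A_Nr}x\to e^{-Ar}x$ uniformly for $r$ in compact sets, the uniform bounds and dominated convergence for the two convolutions), and the uniform estimates carry over to give \eqref{backwardinequality1}--\eqref{backwardinequality2}.

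The main obstacle is this last passage to the limit in the stochastic convolution $\int_t^T e^{-A_N(s-t)}\phi_N(s)\,dW(s)$: one splits it into $\int_t^T e^{-A_N(s-t)}(\phi_N-\phi_M)(s)\,dW(s)$, handled by the $N$-uniform bound on $\phi_N$, and $\int_t^T\big(e^{-A_N(s-t)}-e^{-A_M(s-t)}\big)\phi_M(s)\,dW(s)$, handled by strong convergence of the semigroups and dominated convergence. Alternatively, one first proves existence for $z\in L^2(\Omega;D(A))$ and $f\in L^2(\Omega\times[0,T];D(A))$, where $\varphi$ has enough regularity for this argument to run directly, and then reaches general $z,f$ by approximating with $R(\lambda)z$, $R(\lambda)f$ and invoking \eqref{resolventinequality}, \eqref{resolventcommutes} and \eqref{resolventconvergence} together with the uniform estimates.
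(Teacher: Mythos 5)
Your proposal is correct in its essentials, but note that the paper does not prove this lemma at all: it is imported verbatim as Lemma~2.1 of \cite{asoa}, whose own argument is a direct infinite-dimensional one. There, the martingale representation theorem (Proposition~\ref{martingalerepresentation}) is applied to $\mathbb E[z\mid\mathcal F_t]$ and to $\mathbb E[f(s)\mid\mathcal F_t]$ for each $s$, and the representing kernels are pushed through the semigroup (via $e^{-A(T-t)}=e^{-A(s-t)}e^{-A(T-s)}$ and a stochastic Fubini argument) to produce $\phi$ explicitly; the estimates \eqref{backwardinequality1}--\eqref{backwardinequality2} then follow from conditional Jensen and the It\^o isometry \eqref{isometry}. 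Your route is genuinely different: you exploit the fact that the Stokes operator is positive, self-adjoint and has compact resolvent, hence an eigenbasis, and reduce to finite-dimensional state spaces where $e^{A_Nt}$ exists and the identity $\varphi_N(t)=e^{-A_N(T-t)}z_N+\int_t^Te^{-A_N(s-t)}f_N\,ds-\int_t^Te^{-A_N(s-t)}\phi_N\,dW$ with $\phi_N(s)=e^{A_Ns}\Psi_N(s)$ can be checked by direct computation. What this buys you is a rigorous It\^o formula for $\|\varphi_N\|_H^2$ (so the a priori bounds and the Cauchy property of $(\varphi_N,\phi_N)$ come out cleanly, with the positive term $\langle\varphi_N,A_N\varphi_N\rangle_H$ discarded); what it costs is the spectral hypothesis, which the direct proof does not need. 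Two points deserve a remark rather than alarm: the final passage to the limit is actually easier than you fear, since $\phi_N$ is $P_NH$-valued and $A$ commutes with the spectral projections, so $e^{-A_N(s-t)}\phi_N(s)=e^{-A(s-t)}\phi_N(s)$ and only the It\^o isometry with the contraction bound $\|e^{-Ar}\|_{\mathcal L(H)}\le 1$ is needed; and Proposition~\ref{martingalerepresentation} as stated assumes continuity of the martingale, which for $M_N(t)=\mathbb E[\,\cdot\mid\mathcal F_t]$ holds because the filtration is the one generated by the Wiener process. Your uniqueness argument (conditioning, then It\^o isometry plus injectivity of $e^{-As}$ from the spectral calculus) is sound.
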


Existence and uniqueness results of mild solutions to backward SPDE's with cylindrical Wiener processes can be found in \cite{asoa}.
Similarly, we get the existence of a unique mild solution to system (\ref{backwardstochstokes}).
Furthermore, note that the mild solution of system (\ref{stochstokes}) depends on the controls $u \in U$ and $v \in V$.
Thus, we get this property for the mild solution of system (\ref{backwardstochstokes}) as well.
To illustrate the dependence on the controls $u \in U$ and $v \in V$, let us denote by $(z^*(t;u,v),\Phi(t;u,v))_{t \in [0,T]}$ the mild solution of system (\ref{backwardstochstokes}).
Whenever these processes are considered for fixed controls, we omit the dependency.
For the process $(z^*(t;u,v))_{t \in [0,T]}$, one can show another important regularity property.
Therefor, we need a modification of Young's inequality for convolutions.

\begin{lemma}\label{backwardyoung}
 Let $f \in L^p([0,T])$ and $g \in L^q([0,T])$ be arbitrary.
 We set for $t \in [0,T]$
 \begin{equation*}
  h(t) = \int\limits_t^T f(s-t) g(s) ds.
 \end{equation*}
 If $p,q,r \geq 1$ satisfy $\frac{1}{p} + \frac{1}{q} = \frac{1}{r} + 1$, then $h \in L^r([0,T])$ and
 \begin{equation*}
  \|h\|_{L^r([0,T])} \leq \|f\|_{L^p([0,T])} \|g\|_{L^q([0,T])}.
 \end{equation*}
\end{lemma}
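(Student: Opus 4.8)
The plan is to reduce the statement to the classical Young convolution inequality on the full line $\mathbb{R}$ by means of a suitable change of variables and zero-extension of the functions. First I would extend $f$ and $g$ to functions $\tilde f, \tilde g$ on $\mathbb{R}$ by setting them equal to zero outside $[0,T]$; then $\tilde f \in L^p(\mathbb{R})$, $\tilde g \in L^q(\mathbb{R})$ with the same norms. The integral defining $h(t)$ can be rewritten: substituting $\sigma = s - t$ gives
\begin{equation*}
 h(t) = \int\limits_0^{T-t} f(\sigma)\, g(\sigma + t)\, d\sigma = \int\limits_{\mathbb{R}} \tilde f(\sigma)\, \tilde g(\sigma + t)\, d\sigma,
\end{equation*}
where the second equality holds because $\tilde f(\sigma) = 0$ for $\sigma \notin [0,T]$ and $\tilde g(\sigma + t) = 0$ whenever $\sigma + t \notin [0,T]$, i.e. in particular for $\sigma > T - t$ (using $t \geq 0$) and for $\sigma < -t \leq 0$. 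Hence $h(t) = (\tilde f^- * \tilde g)(t)$ up to a reflection, where $\tilde f^-(x) = \tilde f(-x)$; more precisely, writing $k(x) = \tilde f(-x) \in L^p(\mathbb{R})$ we have $h(t) = \int_{\mathbb{R}} k(-\sigma) \tilde g(\sigma + t)\, d\sigma = \int_{\mathbb{R}} k(u - t)\, \tilde g(u)\, du = (k * \tilde g)(t)$ after the substitution $u = \sigma + t$ and using that convolution is commutative in the sense $(k*\tilde g)(t) = \int k(t-u)\tilde g(u) du$; one checks the reflection bookkeeping carefully so that the final convolution is in standard form.

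Once $h$ is identified with an honest convolution $k * \tilde g$ on $\mathbb{R}$ of an $L^p$ function with an $L^q$ function, the classical Young inequality applies directly: for $p,q,r \geq 1$ with $\tfrac{1}{p} + \tfrac{1}{q} = \tfrac{1}{r} + 1$ one has $\|k * \tilde g\|_{L^r(\mathbb{R})} \leq \|k\|_{L^p(\mathbb{R})} \|\tilde g\|_{L^q(\mathbb{R})}$. Since the support of $h$ is contained in $[0,T]$ (indeed the integral is over $s \in [t,T]$, which is empty for $t > T$), the $L^r(\mathbb{R})$ norm of $h$ equals its $L^r([0,T])$ norm, and the norms of $k$ and $\tilde g$ equal those of $f$ and $g$ on $[0,T]$ respectively. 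This yields
\begin{equation*}
 \|h\|_{L^r([0,T])} = \|k * \tilde g\|_{L^r(\mathbb{R})} \leq \|k\|_{L^p(\mathbb{R})} \|\tilde g\|_{L^q(\mathbb{R})} = \|f\|_{L^p([0,T])} \|g\|_{L^q([0,T])},
\end{equation*}
which is the claimed estimate. In particular $h \in L^r([0,T])$, completing the proof.

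The only slightly delicate point — and the part I would write out most carefully — is the reflection/change-of-variables bookkeeping that turns the "backward" kernel $\int_t^T f(s-t) g(s)\, ds$ into a standard convolution on $\mathbb{R}$, ensuring that the zero-extensions genuinely kill all the contributions outside $[0,T]$ so that no boundary terms appear; everything after that is an immediate invocation of the classical Young inequality. One may also simply cite the classical Young inequality from a standard reference (e.g. the same sources \cite{pfqa,anto,aggi} or a real-analysis text) rather than reprove it.
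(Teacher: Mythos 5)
Your proposal is correct, and it takes a different route from the paper: the paper simply remarks that the proof ``can be obtained similarly to the classical version of Young's inequality for convolutions'' (citing a standard reference), i.e.\ it re-runs the usual H\"older-type argument with the backward kernel $f(s-t)$ in place of $f(t-s)$, whereas you reduce the statement to the classical inequality itself by zero-extending $f,g$ to $\mathbb{R}$ and identifying $h$ with the convolution $k*\tilde g$ where $k(x)=\tilde f(-x)$. Your reduction is arguably cleaner, since it turns the lemma into a one-line corollary of a quotable theorem rather than a parallel proof, and the change-of-variables bookkeeping you flag is indeed the only point requiring care; it checks out ($h(t)=\int_{\mathbb{R}}k(t-u)\tilde g(u)\,du$ for $t\in[0,T]$, and reflection preserves the $L^p$ norm). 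One small inaccuracy: the asserted equality $\|h\|_{L^r([0,T])}=\|k*\tilde g\|_{L^r(\mathbb{R})}$ is not quite right, because $(k*\tilde g)(t)$ need not vanish for $t\in[-T,0)$ (there $\tilde f(\sigma)$ and $\tilde g(\sigma+t)$ can still overlap); but since $h$ agrees with $k*\tilde g$ on $[0,T]$, you have $\|h\|_{L^r([0,T])}\leq\|k*\tilde g\|_{L^r(\mathbb{R})}$, which is all the argument needs, so the slip is harmless.
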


\begin{proof}
 The proof can be obtained similarly to the classical version of Young's inequality for convolutions, see \cite[Theorem 3.9.4]{mt}.
\end{proof}

\begin{proposition}\label{backwardregularity}
 Let $(z^*(t;u,v),\Phi(t;u,v))_{t \in [0,T]}$ be the mild solution of system (\ref{backwardstochstokes}) corresponding to the controls $u \in U$ and $v \in V$.
 Then $(z^*(t;u,v))_{t \in [0,T]}$ takes values in $D(A^\varepsilon)$ with $\varepsilon \in [0,1)$ such that 
 \begin{equation*}
  \mathbb E \int\limits_0^T \|z^*(t;u,v)\|_{D(A^\varepsilon)}^2 dt < \infty.
 \end{equation*}
\end{proposition}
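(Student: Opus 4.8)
The plan is to show that the mild-solution formula for $z^*(t;u,v)$ produces, termwise, a process with values in $D(A^\varepsilon)$ whose $L^2([0,T]\times\Omega;D(A^\varepsilon))$-norm is finite. Recall from the definition that
\begin{equation*}
 z^*(t) = \int\limits_t^T e^{-A(s-t)} G^*(\Phi(s))\,ds + \int\limits_t^T e^{-A(s-t)}\bigl(y(s)-y_d(s)\bigr)\,ds - \int\limits_t^T e^{-A(s-t)}\Phi(s)\,dW(s),
\end{equation*}
so it suffices to estimate $A^\varepsilon$ applied to each of the three terms. For $\varepsilon=0$ the claim is already contained in the definition of the mild solution, so assume $\varepsilon\in(0,1)$. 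The key tool is Lemma \ref{fractional}(iv), which gives $\|A^\varepsilon e^{-A(s-t)}\|_{\mathcal L(H)} \leq M_\varepsilon (s-t)^{-\varepsilon} e^{-\theta(s-t)}$, with $(s-t)^{-\varepsilon}$ integrable on $[0,T]$ since $\varepsilon<1$. For the stochastic integral, one must also check that the integrand $e^{-A(s-t)}\Phi(s)$ takes values in $D(A^\varepsilon)$ and satisfies the hypotheses of Proposition \ref{closedopstochint} with $\mathcal A = A^\varepsilon$, so that $A^\varepsilon$ may be pulled inside; this uses Lemma \ref{fractional}(ii),(iv) together with $\mathbb E\int_0^T\|\Phi(t)\|^2_{\mathcal{L}_{(HS)}(Q^{1/2}(H);H)}\,dt<\infty$, which holds by the definition of mild solution of (\ref{backwardstochstokes}).

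I would then estimate the three contributions as follows. For the first two (Bochner) integrals, write $A^\varepsilon e^{-A(s-t)} = A^\varepsilon e^{-A(s-t)}$ and apply the bound from Lemma \ref{fractional}(iv); the resulting kernel $(s-t)^{-\varepsilon}$ is in $L^1([0,T])$, so Lemma \ref{backwardyoung} (the backward Young inequality, with $p=1$, $q=r=2$) gives
\begin{equation*}
 \mathbb E\int\limits_0^T\Bigl\|\int\limits_t^T A^\varepsilon e^{-A(s-t)}G^*(\Phi(s))\,ds\Bigr\|_H^2\,dt \leq M_\varepsilon^2\Bigl(\int\limits_0^T r^{-\varepsilon}\,dr\Bigr)^2 \|G^*\|_{\mathcal L}^2\,\mathbb E\int\limits_0^T\|\Phi(s)\|^2_{\mathcal{L}_{(HS)}(Q^{1/2}(H);H)}\,ds,
\end{equation*}
which is finite, and analogously for the term involving $y(s)-y_d(s)$, using $y\in L^2([0,T]\times\Omega;H)$ (condition (\ref{solutionregularity}) with $\alpha=0$) and $y_d\in L^2([0,T];H)$. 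For the stochastic term, after moving $A^\varepsilon$ inside one applies Fubini's theorem and the It\^o isometry (\ref{isometry}) to get $\mathbb E\int_0^T\int_t^T\|A^\varepsilon e^{-A(s-t)}\Phi(s)\|^2_{\mathcal{L}_{(HS)}(Q^{1/2}(H);H)}\,ds\,dt$, bound the operator norm by $M_\varepsilon^2(s-t)^{-2\varepsilon}$, and then — since $\varepsilon<1$ does not by itself make $(s-t)^{-2\varepsilon}$ integrable — note that the exponential factor $e^{-2\theta(s-t)}$ is harmless and the real point is the interchange of integration order, after which $\int_0^s (s-t)^{-2\varepsilon}\,dt$ is finite precisely when $2\varepsilon<1$, i.e. $\varepsilon<\tfrac12$. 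For $\varepsilon\in[\tfrac12,1)$ one instead keeps the outer $t$-integral and applies Lemma \ref{backwardyoung} directly to the $L^2$-norm in $s$ of $\|A^\varepsilon e^{-A(s-t)}\Phi(s)\|$, with kernel $(s-t)^{-\varepsilon}\in L^1$; this avoids squaring the singular kernel and gives finiteness for all $\varepsilon<1$.

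The main obstacle is exactly this stochastic-convolution estimate: the naive Fubini-then-isometry route forces $2\varepsilon<1$, so one must either invoke a maximal-regularity / factorization argument for stochastic convolutions or, more elementarily in the spirit of this paper, apply the backward Young inequality (Lemma \ref{backwardyoung}) to the nonsquared integrand before taking expectations, exploiting that $(s-t)^{-\varepsilon}$ is integrable for every $\varepsilon<1$. Once the three terms are shown to lie in $L^2([0,T]\times\Omega;D(A^\varepsilon))$ with finite norm, predictability of $(z^*(t;u,v))_{t\in[0,T]}$ as a $D(A^\varepsilon)$-valued process follows from the predictability already established in the definition of the mild solution together with the closedness of $A^\varepsilon$, completing the proof.
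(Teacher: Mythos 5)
Your treatment of the two Bochner integrals is fine and matches the paper's estimates (Lemma \ref{fractional}(iv) plus Lemma \ref{backwardyoung}). The gap is in the stochastic convolution term. You correctly observe that Fubini plus the It\^o isometry produces the kernel $(s-t)^{-2\varepsilon}$ and therefore only works for $\varepsilon<\tfrac12$, but your proposed repair for $\varepsilon\in[\tfrac12,1)$ --- ``apply Lemma \ref{backwardyoung} to the nonsquared integrand before taking expectations'' --- is not available: the It\^o isometry (\ref{isometry}) is an \emph{equality}, so the $L^2$-norm of $\int_t^T A^\varepsilon e^{-A(s-t)}\Phi(s)\,dW(s)$ \emph{is} the squared integrand integrated in $s$; there is no pathwise, nonsquared bound on a stochastic integral to which Young's inequality could be applied. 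In fact the stochastic convolution term by itself is in general \emph{not} $D(A^\varepsilon)$-valued for $\varepsilon\geq\tfrac12$ (stochastic convolutions only gain half the smoothing of deterministic ones), so no termwise estimate of the mild formula can prove the claim for all $\varepsilon\in[0,1)$.

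The idea you are missing is to eliminate the stochastic integral before estimating. Since $(z^*(t))_{t\in[0,T]}$ is predictable, hence $\mathcal F_t$-measurable, taking $\mathbb E[\,\cdot\mid\mathcal F_t]$ of the mild formula annihilates the forward stochastic integral $\int_t^T e^{-A(s-t)}\Phi(s)\,dW(s)$ and yields
\begin{equation*}
 z^*(t)=\mathbb E\left[\left.\int\limits_t^T e^{-A(s-t)}G^*(\Phi(s))\,ds+\int\limits_t^T e^{-A(s-t)}\bigl(y(s)-y_d(s)\bigr)\,ds\,\right|\mathcal F_t\right].
\end{equation*}
This is exactly how the paper proceeds: only the two Bochner integrals remain, $A^\varepsilon$ is pulled through the conditional expectation, and the singular kernel appears only to the first power $(s-t)^{-\varepsilon}$, which is integrable for every $\varepsilon<1$; conditional Jensen and Lemma \ref{backwardyoung} then give the stated bound in terms of $\mathbb E\int_0^T\|\Phi\|^2\,dt$, $\mathbb E\int_0^T\|y\|_H^2\,dt$ and $\int_0^T\|y_d\|_H^2\,dt$. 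Without this step your argument proves the proposition only for $\varepsilon<\tfrac12$.
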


\begin{proof}
 For the sake of simplicity, we omit the dependence on the controls.
 Since $(z^*(t;u,v))_{t \in [0,T]}$ is predictable, we get for $t \in [0,T]$ and $\mathbb P$-a.s.
 \begin{equation*}
  z^*(t;u,v) = \mathbb E \left[ \left. \int\limits_t^T e^{-A (s-t)} G^*(\Phi(s)) ds + \int\limits_t^T e^{-A (s-t)} \left( y(s) - y_d(s)\right) ds \right| \mathcal F_t\right].
 \end{equation*}
 Recall that the operator $G^* \colon \mathcal{L}_{(HS)}(Q^{1/2}(H);H) \rightarrow H$ is bounded.
 Using Lemma \ref{fractional} and Lemma \ref{backwardyoung}, the process $(z^*(t))_{t \in [0,T]}$ takes values in $D(A^\varepsilon)$ with $\varepsilon \in [0,1)$ and there exists a constant $C^*>0$ such that
 \begin{align*}
  &\mathbb E \int\limits_0^T \|z^*(t;u,v)\|_{D(A^\varepsilon)}^2 dt \\
  &\leq 2 \, \mathbb E \int\limits_0^T \left( \int\limits_t^T \| A^\varepsilon e^{-A (s-t)} G^*(\Phi(s)) \|_H ds \right)^2 dt + 2 \, \mathbb E \int\limits_0^T \left( \int\limits_t^T \| A^\varepsilon e^{-A (s-t)} \left( y(s;u,v) - y_d(s)\right) \|_H^2 ds \right)^2 dt \\
  &\leq 2 M_\varepsilon^2 \, \mathbb E \int\limits_0^T \left(\int\limits_t^T (s-t)^{-\varepsilon} \| G^*(\Phi(s)) \|_H ds \right)^2 dt + 2 M_\varepsilon^2 \, \mathbb E \int\limits_0^T \left( \int\limits_t^T (s-t)^{-\varepsilon} \| y(s;u,v) - y_d(s) \|_H ds \right)^2 dt \\
  &\leq C^* \left[ \mathbb E \int\limits_0^T \| \Phi(t)\|_{\mathcal{L}_{(HS)}(Q^{1/2}(H);H)}^2 dt + \mathbb E \int\limits_0^T \|y(t;u,v)\|_H^2 dt + \int\limits_0^T \|y_d(t)\|_H^2 dt \right].
 \end{align*}
\end{proof}

\subsection{Approximation by a Strong Formulation}\label{sec:approximation}

In general, a duality principle of solutions to forward and backward SPDE's can be obtained by applying an Itô product formula.
This formula is not applicable to solutions in a mild sense.
Hence, we need to approximate the mild solutions of systems (\ref{frechet1}), (\ref{frechet2}) and (\ref{backwardstochstokes}) by strong formulations.
One method is given by introducing the Yosida approximation of the operator $A$, see \cite{seiid}.
For applications regarding duality principles, see \cite{smpf,euas}.
However, we apply the method introduced in \cite{yaos,soss}.
The basic idea is to formulate a mild solution with values in $D(A)$ by using the resolvent operator $R(\lambda)$ introduced in Section \ref{sec:functionalbackground}.
Thus, we get the required convergence results and the mild solutions coincide with the strong solutions.
In this section, we omit the dependence on the controls for the sake of simplicity.

\subsubsection{The Forward Equations}

Here, we provide approximations of the mild solutions to system (\ref{frechet1}) and system (\ref{frechet2}).
We introduce the following systems:

\begin{equation}\label{approxfrechet1}
 \left\{
 \begin{aligned}
  d z_1(t,\lambda) &= \left[- A z_1(t,\lambda) + R(\lambda) B u(t) \right] dt + R(\lambda) G( R(\lambda) z_1(t,\lambda)) d W(t), \\
  z_1(0,\lambda) &= 0,
 \end{aligned}
 \right.
\end{equation}
\begin{equation}\label{approxfrechet2}
 \left\{
 \begin{aligned}
  d z_2(t,\lambda) &= \left[- A z_2(t,\lambda) + A R(\lambda) D v(t) \right] dt + R(\lambda) G(R(\lambda) z_2(t,\lambda)) d W(t), \\
  z_2(0,\lambda) &= 0,
 \end{aligned}
 \right.
\end{equation}
where $\lambda > 0$, $u \in U$ and $v \in V$.
The process $(W(t))_{t \in [0,T]}$ is a Q-Wiener process with values in $H$ and covariance operator $Q \in \mathcal L(H)$.
The operators $A,R(\lambda),B,D,G$ and the spaces $U,V$ are introduced in Section \ref{sec:preliminaries} and Section \ref{sec:stochasticstokes}, respectively.

\begin{definition}
 a) A predictable process $(z_1(t,\lambda))_{t \in [0,T]}$ with values in $D(A)$ is called a \textbf{mild solution of system (\ref{frechet1})} if
 \begin{equation*}
  \mathbb E \int\limits_0^T \| z_1(t,\lambda)\|_{D(A)}^2 dt < \infty,
 \end{equation*}
 and we have for $t \in [0,T]$ and $\mathbb P$-a.s.
 \begin{equation*}
  z_1(t,\lambda) = \int\limits_0^t e^{-A (t-s)} R(\lambda) B u(s) ds + \int\limits_0^t e^{-A (t-s)} R(\lambda) G(R(\lambda) z_1(s,\lambda)) d W(s).
 \end{equation*}
 b) A predictable process $(z_2(t,\lambda))_{t \in [0,T]}$ with values in $D(A)$ is called a \textbf{mild solution of system (\ref{frechet2})} if
 \begin{equation*}
  \mathbb E \int\limits_0^T \| z_2(t,\lambda)\|_{D(A)}^2 dt < \infty,
 \end{equation*}
 and we have for $t \in [0,T]$ and $\mathbb P$-a.s.
 \begin{equation*}
  z_2(t,\lambda) = \int\limits_0^t e^{-A (t-s)} A R(\lambda) D v(s) ds + \int\limits_0^t e^{-A (t-s)}R(\lambda) G(R(\lambda) z_2(s,\lambda)) d W(s).
 \end{equation*}
\end{definition}

\begin{remark}
 Note that the approximation scheme provided in \cite{yaos,soss} differs to the approximation scheme introduced by system (\ref{approxfrechet1}) or system (\ref{approxfrechet2}).
 Here, the additional operator $R(\lambda)$ is necessary to obtain a duality principle. 
\end{remark}

Recall that the operators $R(\lambda)$ and $A R(\lambda)$ are linear and bounded on $H$.
Hence, existence and uniqueness results of mild solutions to system (\ref{approxfrechet1}) and system (\ref{approxfrechet2}) can be obtained similarly to Theorem \ref{existencestochstokes} for fixed $\lambda > 0$.
In the following lemma, we state that the mild solutions of system (\ref{approxfrechet1}) and system (\ref{approxfrechet2}) also satisfy a strong formulation, which is an immediate consequence of \cite[Proposition 2.3]{soss}.

\begin{lemma}\label{forwardstrong}
 Let $(z_1(t,\lambda))_{t \in [0,T]}$ and $(z_2(t,\lambda))_{t \in [0,T]}$ be the mild solutions of system (\ref{approxfrechet1}) and system (\ref{approxfrechet2}), respectively.
 Then we have for fixed $\lambda > 0$, $t \in [0,T]$ and $\mathbb P$-a.s.
 \begin{align*}
  z_1(t,\lambda) &= \int\limits_0^t (-A) z_1(s,\lambda) + R(\lambda) B u(s) ds + \int\limits_0^t R(\lambda) G(R(\lambda)z_1(s,\lambda)) d W(s), \\
  z_2(t,\lambda) &= \int\limits_0^t (-A) z_2(s,\lambda) + A R(\lambda) D v(s) ds + \int\limits_0^t R(\lambda) G(R(\lambda)z_2(s,\lambda)) d W(s).
 \end{align*}
\end{lemma}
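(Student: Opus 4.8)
The plan is to deduce the statement from \cite[Proposition 2.3]{soss}: the mild solutions $(z_1(t,\lambda))_{t \in [0,T]}$ and $(z_2(t,\lambda))_{t \in [0,T]}$ already take values in $D(A)$ and satisfy $\mathbb E \int_0^T \| z_i(t,\lambda)\|_{D(A)}^2 \, dt < \infty$ by definition, so the whole task is to check that the coefficients of (\ref{approxfrechet1}) and (\ref{approxfrechet2}) meet the integrability hypotheses of that proposition. Abbreviating $f_1(t) = R(\lambda) B u(t)$, $f_2(t) = A R(\lambda) D v(t)$ and $\Psi_i(t) = R(\lambda) G(R(\lambda) z_i(t,\lambda))$, the claimed strong formulation reads $z_i(t,\lambda) = \int_0^t [-A z_i(s,\lambda) + f_i(s)] \, ds + \int_0^t \Psi_i(s) \, dW(s)$. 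Alternatively, one can carry out by hand the (deterministic and stochastic) Fubini argument underlying \cite[Proposition 2.3]{soss}, which I sketch at the end.

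First I would collect the relevant mapping properties. By the closed graph theorem $R(\lambda) \in \mathcal L(H)$ and $A R(\lambda) = \lambda(I - R(\lambda)) \in \mathcal L(H)$ for fixed $\lambda > 0$, and Corollary \ref{dirichletoperatorregularity} with $s = 0$ gives $D \in \mathcal L(V^0(\partial \mathcal D); V^{1/2}(\mathcal D)) \subset \mathcal L(V^0(\partial \mathcal D); H)$; moreover, as noted in Section \ref{sec:functionalbackground}, $D$ maps $V^0(\partial \mathcal D)$ even into $D(A^\beta)$ for $\beta \in \left(0,\frac{1}{4}\right)$, and since $A R(\lambda) = \lambda(I - R(\lambda))$ is bounded on $D(A^\beta)$, so does $f_2(\cdot) = A R(\lambda) D (\cdot)$. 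Since $B$ and $G$ are bounded, it follows that $f_1(t) \in D(A)$ with $\mathbb E \int_0^T \| A f_1(t)\|_H^2 \, dt \leq C \, \mathbb E \int_0^T \| u(t)\|_H^2 \, dt < \infty$, that $f_2(t) \in D(A^\beta)$ with $\mathbb E \int_0^T \| f_2(t)\|_H^2 \, dt \leq C \, \mathbb E \int_0^T \| v(t)\|_{V^0(\partial \mathcal D)}^2 \, dt < \infty$, and, using $z_i(t,\lambda) \in D(A) \subset H$ together with $\| \cdot \|_H \leq \| A^{-1}\|_{\mathcal L(H)} \| \cdot \|_{D(A)}$, that $\Psi_i(t)$ takes values in $D(A)$ with $A \Psi_i(t) = A R(\lambda) G(R(\lambda) z_i(t,\lambda))$ and $\mathbb E \int_0^T \| A \Psi_i(t)\|_{\mathcal{L}_{(HS)}(Q^{1/2}(H);H)}^2 \, dt \leq C \, \mathbb E \int_0^T \| z_i(t,\lambda)\|_{D(A)}^2 \, dt < \infty$ (all constants may depend on $\lambda$). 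By Lemma \ref{fractional} (ii) the integrand $e^{-A(t-s)} \Psi_i(s)$ is $D(A)$-valued, and Proposition \ref{closedopstochint} with $\mathcal A = A$ applies thanks to the last two bounds together with $\| e^{-At}\|_{\mathcal L(H)} \leq 1$.

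With these facts in place the passage from the mild to the strong form is routine. For $h \in D(A)$ one has $e^{-A(t-s)} h - h = -\int_s^t A e^{-A(\tau-s)} h \, d\tau$, where $A e^{-A(\tau-s)} h = e^{-A(\tau-s)} A h$ by Lemma \ref{fractional} (iii); I would insert this identity (with $h = f_1(s)$; with $h = R(\lambda) D v(s) \in D(A)$ after writing $f_2(s) = A(R(\lambda) D v(s))$ and using the extra $D(A^\beta)$-regularity to control the resulting singular kernel $(\tau-s)^{-(1-\beta)}$; and with $h = \Psi_i(s)$) into the drift and the stochastic convolutions in the definition of $z_i(t,\lambda)$, exchange the order of integration via the deterministic and the stochastic Fubini theorem, and pull $A$ out of the resulting integrals using that $A$ is closed and, for the stochastic term, Proposition \ref{closedopstochint}. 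The double integrals then combine to $\int_0^t A z_i(\tau,\lambda) \, d\tau$, which yields $z_i(t,\lambda) = \int_0^t [-A z_i(s,\lambda) + f_i(s)] \, ds + \int_0^t \Psi_i(s) \, dW(s)$, the desired strong formulation. I expect the only genuine obstacle to be justifying these interchanges — in particular the stochastic Fubini theorem applied to $\int_0^t \int_s^t e^{-A(\tau-s)} A \Psi_i(s) \, d\tau \, dW(s)$ and the passage of the unbounded operator $A$ through the stochastic integral — and this is precisely what the square-integrability of $A \Psi_i(\cdot)$ established in the previous step, combined with $\| e^{-At}\|_{\mathcal L(H)} \leq 1$, is there to handle.
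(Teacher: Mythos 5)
Your proposal is correct and follows essentially the same route as the paper: the paper gives no proof beyond declaring the lemma an immediate consequence of \cite[Proposition 2.3]{soss}, and your verification of the integrability hypotheses (boundedness of $R(\lambda)$, $AR(\lambda)$, $A^\beta D$, and the resulting square-integrability of $Af_1$, $f_2$ and $A\Psi_i$) together with the standard semigroup/Fubini argument is exactly what that citation compresses. The one point worth keeping explicit is the one you already flagged, namely that $f_2(s)=AR(\lambda)Dv(s)$ only lies in $D(A^\beta)$, so the identity $e^{-A(t-s)}h-h=-\int_s^t Ae^{-A(\tau-s)}h\,d\tau$ must be justified via the integrable singularity $(\tau-s)^{-(1-\beta)}$ rather than via $h\in D(A)$.
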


We have the following convergence results.

\begin{lemma}\label{frechetconvergence}
 (i) Let $(z_1(t))_{t \in [0,T]}$ and $(z_1(t,\lambda))_{t \in [0,T]}$ be the mild solutions of system (\ref{frechet1}) and system (\ref{approxfrechet1}), respectively.
 Then we have
 \begin{equation*}
  \lim_{\lambda \rightarrow \infty} \mathbb E \int\limits_0^T \|z_1(t) - z_1(t,\lambda)\|_H^2 dt = 0.
 \end{equation*}
 (ii) Let $(z_2(t))_{t \in [0,T]}$ and $(z_2(t,\lambda))_{t \in [0,T]}$ be the mild solutions of system (\ref{frechet2}) and system (\ref{approxfrechet2}), respectively.
 Then we have
 \begin{equation*}
  \lim_{\lambda \rightarrow \infty} \mathbb E \int\limits_0^T \|z_2(t) - z_2(t,\lambda)\|_H^2 dt = 0.
 \end{equation*}
\end{lemma}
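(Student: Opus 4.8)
The plan is to treat (i) and (ii) in parallel by bounding the mean-square error $\phi_i(t) := \mathbb E \, \|z_i(t) - z_i(t,\lambda)\|_H^2$ and closing a Gronwall loop. First I would subtract the mild-solution identities. In case (i) the difference $z_1(t) - z_1(t,\lambda)$ equals $\int_0^t e^{-A(t-s)}(I - R(\lambda))Bu(s)\,ds$ plus the stochastic integral $\int_0^t e^{-A(t-s)}\big[G(z_1(s)) - R(\lambda)G(R(\lambda)z_1(s,\lambda))\big]\,dW(s)$; predictability and square-integrability of the integrand follow from the corresponding properties of $z_1$ and $z_1(\cdot,\lambda)$. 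The key algebraic step is to telescope the bracket, using linearity of $G$ and $R(\lambda)$, into $(I-R(\lambda))G(z_1(s)) + R(\lambda)G\big((I-R(\lambda))z_1(s)\big) + R(\lambda)G\big(R(\lambda)(z_1(s)-z_1(s,\lambda))\big)$, so that only the last summand contains the unknown error. In case (ii) the only new feature is the unbounded factor in $\int_0^t A e^{-A(t-s)}(I-R(\lambda))Dv(s)\,ds$; there I would fix $\beta \in (0,\tfrac14)$ with $A^\beta D \in \mathcal L(V^0(\partial\mathcal D);H)$, use that $R(\lambda)$ commutes with $A^\beta$ by (\ref{resolventcommutes}), and write the integrand as $A^{1-\beta}e^{-A(t-s)}(I-R(\lambda))A^\beta D v(s)$, while the stochastic term is handled exactly as in (i).

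Next I would estimate. Applying the It\^o isometry (\ref{isometry}), the contractions $\|e^{-At}\|_{\mathcal L(H)} \le 1$ and $\|R(\lambda)\|_{\mathcal L(H)} \le 1$ (the latter being (\ref{resolventinequality})), boundedness of $B$ and $G$, Lemma \ref{fractional}(iv) for $\|A^{1-\beta}e^{-At}\|_{\mathcal L(H)} \le M_{1-\beta} t^{-(1-\beta)}$ in case (ii), Cauchy--Schwarz in the deterministic integrals (and Young's inequality for convolutions in (ii), the kernel $t^{-(1-\beta)}$ lying in $L^1([0,T])$ since $1-\beta<1$), and Fubini's theorem, one arrives at an inequality $\phi_i(t) \le a_i(\lambda) + C \int_0^t \phi_i(s)\,ds$ for all $t \in [0,T]$, with $C>0$ independent of $\lambda$. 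Here $a_i(\lambda)$ collects three error terms: a control term $\mathbb E \int_0^T \|(I-R(\lambda))Bu(s)\|_H^2\,ds$ (respectively $\mathbb E \int_0^T \|(I-R(\lambda))A^\beta D v(s)\|_H^2\,ds$), a term $\|G\|^2\,\mathbb E \int_0^T \|(I-R(\lambda))z_i(s)\|_H^2\,ds$, and a Hilbert--Schmidt term $\mathbb E \int_0^T \|(I-R(\lambda))G(z_i(s))\|_{\mathcal{L}_{(HS)}(Q^{1/2}(H);H)}^2\,ds$.

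The main obstacle is showing $a_i(\lambda) \to 0$ as $\lambda \to \infty$. For the first two terms this is dominated convergence over $\Omega \times [0,T]$: the integrands tend to $0$ pointwise by (\ref{resolventconvergence}) and are dominated by $4\|Bu(s)\|_H^2$, $4\|A^\beta D v(s)\|_H^2$, respectively $4\|G\|^2\|z_i(s)\|_H^2$, which are integrable since $u \in U$, $v \in V$, and by (\ref{regularity}) (and its analogue for $z_2$). For the Hilbert--Schmidt term I would first note that strong convergence $R(\lambda) \to I$ on $H$ upgrades to $\|(I-R(\lambda))T\|_{\mathcal{L}_{(HS)}(Q^{1/2}(H);H)} \to 0$ for any fixed $T \in \mathcal{L}_{(HS)}(Q^{1/2}(H);H)$: expanding the Hilbert--Schmidt norm over an orthonormal basis $(\phi_k)$ of $Q^{1/2}(H)$, each $\|(I-R(\lambda))T\phi_k\|_H^2 \to 0$ by (\ref{resolventconvergence}) and is dominated by the summable sequence $4\|T\phi_k\|_H^2$, so dominated convergence over the counting measure applies; a second dominated-convergence step over $\Omega \times [0,T]$, with dominating function $4\|G\|^2\|z_i(s)\|_H^2$, then gives $\mathbb E \int_0^T \|(I-R(\lambda))G(z_i(s))\|_{\mathcal{L}_{(HS)}(Q^{1/2}(H);H)}^2\,ds \to 0$. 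Hence $a_i(\lambda) \to 0$.

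Finally, applying Gronwall's inequality in the form of Lemma \ref{gronwall} with the constant, hence nondecreasing, function $a_i(\lambda)$ yields $\phi_i(t) \le a_i(\lambda)\,e^{Ct} \le a_i(\lambda)\,e^{CT}$ for all $t \in [0,T]$; integrating over $[0,T]$ gives $\mathbb E \int_0^T \|z_i(t) - z_i(t,\lambda)\|_H^2\,dt \le T e^{CT} a_i(\lambda) \to 0$ as $\lambda \to \infty$, which is the assertion for both $i=1$ and $i=2$.
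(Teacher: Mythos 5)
Your overall route is the same as the paper's: subtract the mild formulations, telescope the noise term so that only one summand carries the unknown error $z_i(s)-z_i(s,\lambda)$, estimate with the It\^o isometry and the contractivity of $e^{-At}$ and $R(\lambda)$, close with Gronwall, and send $\lambda\to\infty$ via (\ref{resolventconvergence}) and dominated convergence. Your telescoping $(I-R(\lambda))G(z_1)+R(\lambda)G((I-R(\lambda))z_1)+R(\lambda)G(R(\lambda)(z_1-z_1(\cdot,\lambda)))$ is an equivalent reordering of the paper's $G((I-R(\lambda))z_1)+(I-R(\lambda))G(R(\lambda)z_1)+R(\lambda)G(R(\lambda)(z_1-z_1(\cdot,\lambda)))$, and your explicit argument that strong convergence $R(\lambda)\to I$ upgrades to convergence of $\|(I-R(\lambda))T\|_{\mathcal{L}_{(HS)}(Q^{1/2}(H);H)}$ for fixed Hilbert--Schmidt $T$ (orthonormal expansion plus dominated convergence over the counting measure) is a detail the paper leaves implicit; that part is a genuine improvement in rigor.

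There is one step in case (ii) that fails as literally stated: the claim that $\phi_2(t)\le a_2(\lambda)+C\int_0^t\phi_2(s)\,ds$ holds with $a_2(\lambda)$ \emph{constant in} $t$, where $a_2(\lambda)$ contains only $\mathbb E\int_0^T\|(I-R(\lambda))A^\beta Dv(s)\|_H^2\,ds$. The singular convolution $\bigl(\int_0^t(t-s)^{-(1-\beta)}\|(I-R(\lambda))A^\beta Dv(s)\|_H\,ds\bigr)^2$ cannot be bounded uniformly in $t$ by a constant multiple of the $L^2$-norm of $\|(I-R(\lambda))A^\beta Dv(\cdot)\|_H$, because $(t-s)^{-(1-\beta)}$ is not square integrable for $\beta<\tfrac12$ (take $g=\varepsilon^{-1/2}\mathds 1_{[t-\varepsilon,t]}$ to see the sup blows up); Young's inequality only controls the $L^1$- or $L^2$-in-$t$ norm of this term, not its supremum. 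The repair is routine and is exactly what the paper does: keep the forcing term $a_2(t,\lambda)$ as a $t$-dependent integrable function, apply the first form of Lemma \ref{gronwall} (integrable $a$, not necessarily nondecreasing), integrate the resulting bound over $[0,T]$, and only then invoke Young's inequality for convolutions and Fubini to control $\int_0^T a_2(t,\lambda)\,dt$ by $\mathbb E\int_0^T\|(I-R(\lambda))A^\beta Dv(s)\|_H^2\,ds$. With that adjustment your argument goes through and coincides with the paper's proof; case (i) is unaffected since there the forcing term really is bounded uniformly in $t$.
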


\begin{proof}
 First, we show part (i).
 Let $I$ be the identity operator in $H$.
 Recall that $G \colon  H \rightarrow \mathcal{L}_{(HS)}(Q^{1/2}(H);H)$ is linear and bounded.
 By definition, we have for all $\lambda > 0$, $t \in [0,T]$ and $\mathbb P$-a.s.
 \begin{align*}
  z_1(t) - z_1(t,\lambda) 
  &= \int\limits_0^t e^{-A (t-s)} [I-R(\lambda)] B u(s) ds + \int\limits_0^t e^{-A (t-s)}  G([I-R(\lambda)]z_1(s)) d W(s) \\
  &\quad + \int\limits_0^t e^{-A (t-s)}  [I-R(\lambda)] G(R(\lambda)z_1(s)) d W(s) + \int\limits_0^t e^{-A (t-s)} R(\lambda) G(R(\lambda)\left[z_1(s)-z_1(s,\lambda)\right]) d W(s).
 \end{align*}
 The remaining part of the proof can be obtained similarly to \cite[Lemma 3.1]{soss} using Lemma \ref{gronwall}.

 Next, we prove part (ii).
 By definition, we obtain for all $\lambda > 0$, $t \in [0,T]$ and $\mathbb P$-a.s.
 \begin{align*}
  z_2(t) - z_2(t,\lambda) 
  &= \int\limits_0^t A e^{-A (t-s)} [I-R(\lambda)] D v(s) ds + \int\limits_0^t e^{-A (t-s)}  G([I-R(\lambda)]z_2(s)) d W(s) \\
  &\quad + \int\limits_0^t e^{-A (t-s)}  [I-R(\lambda)] G(R(\lambda)z_2(s)) d W(s) + \int\limits_0^t e^{-A (t-s)} R(\lambda) G(R(\lambda)\left[z_2(s)-z_2(s,\lambda)\right]) d W(s).
 \end{align*}
 Thus, we get for all $\lambda > 0$ and $t \in [0,T]$
 \begin{equation}\label{ineq2}
  \mathbb E \, \left\| z_2(t)-z_2(t,\lambda) \right\|_H^2 \leq 4 \, \mathcal I_1(t,\lambda) + 4 \, \mathcal I_2(t,\lambda) + 4 \, \mathcal I_3(t,\lambda),
 \end{equation}
 where
 \begin{align*}
  \mathcal I_1(t,\lambda) &= \mathbb E \left\| \int\limits_0^t A e^{-A (t-s)} [I-R(\lambda)] D v(s) ds \right\|_H^2, \\
  \mathcal I_2(t,\lambda) &= \mathbb E \left\| \int\limits_0^t e^{-A (t-s)} G([I-R(\lambda)]z_2(s)) d W(s) \right\|_H^2 + \mathbb E \left\| \int\limits_0^t e^{-A (t-s)} [I-R(\lambda)] G(R(\lambda)z_2(s)) d W(s) \right\|_H^2 , \\
  \mathcal I_3(t,\lambda) &= \mathbb E \left\| \int\limits_0^t e^{-A (t-s)} R(\lambda) G(R(\lambda)\left[z_2(s)-z_2(s,\lambda)\right]) d W(s) \right\|_H^2.
 \end{align*}
 Recall that $D \colon V^0(\partial \mathcal D) \rightarrow D(A^\alpha)$ for all $\alpha \in \left(0,\frac{1}{4}\right)$.
 Using Lemma \ref{fractional}, equation (\ref{resolventcommutes}), Fubini's theorem and Young's inequality for convolutions, there exists a constant $C_1 > 0$ such that for all $\lambda > 0$ and all $t \in [0,T]$
 \begin{align}\label{ineq3}
  \int\limits_0^t \mathcal I_1(s,\lambda) \, ds 
  &\leq \mathbb E \int\limits_0^t \left(\int\limits_0^s \left\| A^{1-\alpha} e^{-A (s-r)} [I-R(\lambda)] A^\alpha D v(r) \right\|_H dr \right)^2 ds \nonumber \\
  &\leq C_1 \, \mathbb E \int\limits_0^T \left\| [I-R(\lambda)] A^\alpha D v(t) \right\|_H^2 dt.
 \end{align}
 Recall that $\left\| e^{-At} \right\|_{\mathcal L(H)} \leq 1$ for all $t \in [0,T]$.
 Due to the Itô isometry (\ref{isometry}) and Fubini's theorem, there exists a constant $C_2 > 0$ such that for all $\lambda > 0$ and all $t \in [0,T]$
 \begin{align}\label{ineq7}
  \int\limits_0^t \mathcal I_2(s,\lambda) \, ds 
  &\leq \int\limits_0^t \mathbb E \int\limits_0^s \left\| e^{-A (s-r)} G([I-R(\lambda)]z_2(r)) \right\|_{\mathcal{L}_{(HS)}(Q^{1/2}(H);H)}^2 dr \, ds \nonumber \\
  &\quad + \int\limits_0^t \mathbb E \int\limits_0^s \left\| e^{-A (s-r)} [I-R(\lambda)] G(R(\lambda)z_2(r)) \right\|_{\mathcal{L}_{(HS)}(Q^{1/2}(H);H)}^2 dr \, ds \nonumber \\
  &\leq C_2 \left[ \mathbb E \int\limits_0^T \left\| [I-R(\lambda)]z_2(t) \right\|_H^2 dt + \mathbb E \int\limits_0^T \left\| [I-R(\lambda)] G(R(\lambda)z_2(t)) \right\|_{\mathcal{L}_{(HS)}(Q^{1/2}(H);H)}^2 dt \right].
 \end{align}
 By the Itô isometry (\ref{isometry}), inequality (\ref{resolventinequality}) and Fubini's theorem, there exists a constant $C_3 > 0$ such that for all $\lambda > 0$ and all $t \in [0,T]$
 \begin{equation*}
  \mathcal I_3(t,\lambda) \leq C_3 \int\limits_0^t \mathbb E \, \left\| z_2(s)-z_2(s,\lambda) \right\|_H^2 ds.
 \end{equation*}
 Due to inequality (\ref{ineq2}), we get for all $\lambda > 0$ and $t \in [0,T]$
 \begin{equation*}
  \mathbb E \left\| z_2(t)-z_2(t,\lambda) \right\|_H^2 \leq 4 \; \mathcal I_1(t,\lambda) + 4 \; \mathcal I_2(t,\lambda) + 4 C_3 \int\limits_0^t \mathbb E \, \left\| z_2(s)-z_2(s,\lambda) \right\|_H^2 ds.
 \end{equation*}
 Applying Lemma \ref{gronwall}, we obtain for all $\lambda > 0$ and $t \in [0,T]$
 \begin{equation*}
  \mathbb E \left\| z_2(t)-z_2(t,\lambda) \right\|_H^2 \leq 4 \; \mathcal I_1(t,\lambda) + 4 \; \mathcal I_2(t,\lambda) + 16 C_3 e^{4 C_3 t} \int\limits_0^t [\mathcal I_1(s,\lambda) + \mathcal I_2(s,\lambda)] ds.
 \end{equation*}
 Using Fubini's theorem, inequality (\ref{ineq3}) and inequality (\ref{ineq7}), there exists a constant $C_3 > 0$ such that for all $\lambda > 0$
 \begin{align*}
  \mathbb E \int\limits_0^T \left\| z_2(t)-z_2(t,\lambda) \right\|_H^2 dt &\leq C^* \, \mathbb E \int\limits_0^T \left\| [I-R(\lambda)] A^\alpha D v(t) \right\|_H^2 dt + C^* \, \mathbb E \int\limits_0^T \left\| [I-R(\lambda)]z_2(t) \right\|_H^2 dt \\
  &\quad + C^* \, \mathbb E \int\limits_0^T \left\| [I-R(\lambda)] G(R(\lambda)z_2(t)) \right\|_{\mathcal{L}_{(HS)}(Q^{1/2}(H);H)}^2 dt.
 \end{align*}
 By equation (\ref{resolventconvergence}) and Lebesgue's dominated convergence theorem \cite[Theorem 2.8.1]{mt}, we can infer 
 \begin{equation*}
  \lim_{\lambda \rightarrow \infty} \mathbb E \int\limits_0^T \left\| z_2(t)-z_2(t,\lambda) \right\|_H^2 dt = 0.
 \end{equation*}
\end{proof}

\subsubsection{The Backward Equation}

Here we provide an approximation of the mild solution to system (\ref{backwardstochstokes}).
We introduce the following backward SPDE:
\begin{equation}\label{approxbackwardstochstokes}
 \left\{
 \begin{aligned}
  d z^*(t,\lambda) &= -[ -A z^*(t,\lambda) + R(\lambda) G^*(R(\lambda)\Phi(t,\lambda)) + R(\lambda) (y(t) - y_d(t))] dt + \Phi(t,\lambda) d W(t), \\
  z^*(T,\lambda) &= 0,
 \end{aligned}
 \right.
\end{equation}
where $\lambda > 0$. 
The process $(y(t))_{t \in [0,T]}$ is the mild solution of system (\ref{stochstokes}) and $(W(t))_{t \in [0,T]}$ is a Q-Wiener process with values in $H$ and covariance operator $Q \in \mathcal L(H)$.
The function $y_d \in L^2([0,T];H)$ is the desired velocity field.
The operators $A,R(\lambda),G^*$ are introduced in Section \ref{sec:functionalbackground} and Section \ref{sec:adjoint}, respectively.

\begin{definition}
 A pair of predictable processes $(z^*(t,\lambda),\Phi(t,\lambda))_{t \in [0,T]}$ with values in $D(A) \times \mathcal{L}_{(HS)}(Q^{1/2}(H);H)$ is called a \textbf{mild solution of system (\ref{approxbackwardstochstokes})} if
 \begin{align*}
  &\sup_{t \in [0,T]} \mathbb E \, \|z^*(t,\lambda)\|_{D(A)}^2 < \infty,
  &\mathbb E \int\limits_0^T \| \Phi(t,\lambda)\|_{\mathcal{L}_{(HS)}(Q^{1/2}(H);H)}^2 dt < \infty,
 \end{align*}
 and we have for all $t \in [0,T]$ and $\mathbb P$-a.s.
 \begin{equation*}
  z^*(t,\lambda) = \int\limits_t^T e^{-A (s-t)} R(\lambda) G^*(R(\lambda) \Phi(s,\lambda)) ds + \int\limits_t^T e^{-A (s-t)} R(\lambda) \left( y(s) - y_d(s)\right) ds - \int\limits_t^T e^{-A (s-t)} \Phi(s,\lambda) d W(s).
 \end{equation*}
\end{definition}

Recall that the operators $R(\lambda)$ and $A R(\lambda)$ are linear and bounded in $H$.
Hence, existence and uniqueness results of the mild solution to system (\ref{approxbackwardstochstokes}) can be obtained similarly to \cite{asoa}.
In the following lemma, we state that the mild solution of system (\ref{approxbackwardstochstokes}) also satisfies a strong formulation, which is an immediate consequence of \cite[Theorem 4.2]{smaw}.

\begin{lemma}\label{backwardstrong}
 Let the pair of stochastic processes $(z^*(t,\lambda),\Phi(t,\lambda))_{t \in [0,T]}$ be the mild solution of system (\ref{approxbackwardstochstokes}).
 Then we have for fixed $\lambda > 0$, all $t \in [0,T]$ and $\mathbb P$-a.s.
 \begin{equation*}
  z^*(t,\lambda) = \int\limits_t^T (-A) z^*(s,\lambda) + R(\lambda) G^*(R(\lambda)\Phi(s,\lambda)) + R(\lambda) \left( y(s) - y_d(s)\right) ds - \int\limits_t^T \Phi(s,\lambda) d W(s).
 \end{equation*}
\end{lemma}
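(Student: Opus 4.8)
The claim is that the mild solution of the approximating backward system also solves it in the strong sense. Since, by construction, $z^*(\cdot,\lambda)$ takes values in $D(A)$ with $\sup_{t\in[0,T]}\mathbb E\|z^*(t,\lambda)\|_{D(A)}^2<\infty$, the process $s\mapsto(-A)z^*(s,\lambda)$ is a well-defined $H$-valued process which is ($\mathbb P$-a.s.) Bochner integrable on $[0,T]$, so the right-hand side of the asserted identity is meaningful. Abbreviate $f(s,\lambda):=R(\lambda)G^*(R(\lambda)\Phi(s,\lambda))+R(\lambda)(y(s)-y_d(s))$; this is a predictable $H$-valued process with $\mathbb E\int_0^T\|f(s,\lambda)\|_H^2\,ds<\infty$, by boundedness of $R(\lambda)$ and $G^*$ together with the integrability of $\Phi(\cdot,\lambda)$, $y$ and $y_d$, and it in fact takes values in $D(A)$ because $R(\lambda)$ maps $H$ into $D(A)$ with $AR(\lambda)\in\mathcal L(H)$. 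With this notation the assertion reads $z^*(t,\lambda)=\int_t^T\bigl[(-A)z^*(s,\lambda)+f(s,\lambda)\bigr]\,ds-\int_t^T\Phi(s,\lambda)\,dW(s)$, and the quickest route is to invoke \cite[Theorem 4.2]{smaw} applied to the pair $(z^*(\cdot,\lambda),\Phi(\cdot,\lambda))$ with terminal value $0$ and inhomogeneity $f(\cdot,\lambda)$, whose hypotheses are precisely those just verified.

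For a self-contained argument I would compute $\int_t^T(-A)z^*(s,\lambda)\,ds$ by substituting the mild representation of $z^*(s,\lambda)$ and testing against an arbitrary $h\in D(A)$ (a dense subset of $H$, and $D(A)=D(A^*)$ since $A$ is self-adjoint). Using Lemma \ref{fractionalselfadjoint} and Lemma \ref{fractional}(iii), one rewrites $\langle e^{-A(r-s)}f(r,\lambda),(-A)h\rangle=\langle f(r,\lambda),(-A)e^{-A(r-s)}h\rangle$ and, for the stochastic convolution, $\bigl\langle\int_s^Te^{-A(r-s)}\Phi(r,\lambda)\,dW(r),(-A)h\bigr\rangle=\int_s^T\bigl\langle\Phi(r,\lambda)\,dW(r),(-A)e^{-A(r-s)}h\bigr\rangle$; the point of moving the semigroup onto $h$ is that $(-A)e^{-A(r-s)}h=e^{-A(r-s)}(-A)h$ has norm at most $\|Ah\|$, so the singularity at $r=s$ disappears. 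Then I would exchange the order of the $s$- and $r$-integrations using Fubini's theorem for the deterministic term and the stochastic Fubini theorem for the martingale term, and evaluate the inner integral through the semigroup identity $\int_t^r(-A)e^{-A(r-s)}h\,ds=\int_0^{r-t}\tfrac{d}{du}\bigl(e^{-Au}h\bigr)\,du=(e^{-A(r-t)}-I)h$, valid for $h\in D(A)$. Collecting terms and invoking the mild representation once more, one obtains $\bigl\langle\int_t^T(-A)z^*(s,\lambda)\,ds,h\bigr\rangle=\bigl\langle z^*(t,\lambda)-\int_t^Tf(s,\lambda)\,ds+\int_t^T\Phi(s,\lambda)\,dW(s),h\bigr\rangle$; since $h\in D(A)$ is arbitrary and both sides are genuine $H$-valued random variables, this holds in $H$ and $\mathbb P$-a.s., and rearranging yields the claim.

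The only real difficulty lies in the interplay of the unbounded operator $A$ with the stochastic integral: one may not apply $-A$ directly under $\int_t^Te^{-A(s-t)}\Phi(s,\lambda)\,dW(s)$ via Proposition \ref{closedopstochint}, since $\|(-A)e^{-A(s-t)}\|_{\mathcal L(H)}\leq M_1(s-t)^{-1}$ fails to be square integrable near $s=t$ and $\Phi(\cdot,\lambda)$ has no extra spatial regularity. This is exactly why the deterministic forcing is pre-multiplied by $R(\lambda)$ --- so that $(-A)e^{-A(r-s)}f(r,\lambda)=e^{-A(r-s)}(-A)R(\lambda)[\,\cdots\,]$ stays bounded --- and why the martingale term must be handled weakly, by testing against $h\in D(A)$ as above; equivalently, one may use $z^*(t,\lambda)=\mathbb E\bigl[\int_t^Te^{-A(s-t)}f(s,\lambda)\,ds\,\big|\,\mathcal F_t\bigr]$ and commute $-A$ with the conditional expectation. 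Once these interchanges are justified, the identity reduces to the elementary semigroup computation above together with the Fubini theorems, which is the content of \cite[Theorem 4.2]{smaw}.
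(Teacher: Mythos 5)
Your proposal is correct and its main route is exactly the paper's: the paper offers no proof beyond observing that the identity is an immediate consequence of \cite[Theorem 4.2]{smaw}, which is precisely your first paragraph (verifying that $z^*(\cdot,\lambda)$ is $D(A)$-valued and that the inhomogeneity $R(\lambda)G^*(R(\lambda)\Phi(\cdot,\lambda))+R(\lambda)(y(\cdot)-y_d(\cdot))$ is a square-integrable predictable $H$-valued process). Your additional self-contained argument --- testing against $h\in D(A)$, applying the deterministic and stochastic Fubini theorems, and using $\int_t^r(-A)e^{-A(r-s)}h\,ds=(e^{-A(r-t)}-I)h$ --- is a correct supplement that the paper does not spell out.
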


We have the following convergence results.

\begin{lemma}\label{convergencebackward}
 Let $(z^*(t),\Phi(t))_{t \in [0,T]}$ and $(z^*(t,\lambda),\Phi(t,\lambda))_{t \in [0,T]}$ be the mild solutions of system (\ref{backwardstochstokes}) and system (\ref{approxbackwardstochstokes}), respectively.
 Then we have
 \begin{align*}
  &\lim_{\lambda \rightarrow \infty} \sup_{t \in [0,T]} \mathbb E \, \|z^*(t) - z^*(t,\lambda)\|_H^2 = 0,
  &\lim_{\lambda \rightarrow \infty} \mathbb E \int\limits_0^T \|\Phi(t) - \Phi(t,\lambda)\|_{\mathcal{L}_{(HS)}(Q^{1/2}(H);H)}^2 dt = 0.
 \end{align*}
\end{lemma}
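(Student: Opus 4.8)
The plan is to follow the pattern of Lemma \ref{frechetconvergence}, but to replace the role of Gronwall's inequality (Lemma \ref{gronwall}) by the a priori estimates (\ref{backwardinequality1})--(\ref{backwardinequality2}) of Lemma \ref{backwardlemma}, and to localize in time because the forcing of the equation satisfied by the difference is still coupled to the unknown. First I would subtract the integral equations defining the mild solutions of system (\ref{backwardstochstokes}) and system (\ref{approxbackwardstochstokes}). Inserting $\pm R(\lambda) G^*(\Phi(s))$ and $\pm R(\lambda) G^*(R(\lambda)\Phi(s))$ and using the linearity of $G^*$ and of $R(\lambda)$,
\[
 G^*(\Phi(s)) - R(\lambda) G^*(R(\lambda)\Phi(s,\lambda)) = [I-R(\lambda)]G^*(\Phi(s)) + R(\lambda)G^*([I-R(\lambda)]\Phi(s)) + R(\lambda)G^*(R(\lambda)[\Phi(s)-\Phi(s,\lambda)]).
\]
Together with $[I-R(\lambda)](y(s)-y_d(s))$ coming from the second drift term and $-(\Phi(s)-\Phi(s,\lambda))$ from the stochastic integrand, the pair $(z^*(\cdot)-z^*(\cdot,\lambda),\,\Phi(\cdot)-\Phi(\cdot,\lambda))$ satisfies the integral equation of Lemma \ref{backwardlemma} with terminal value $z=0$ and forcing $f(s,\lambda)=g(s,\lambda)+R(\lambda)G^*(R(\lambda)[\Phi(s)-\Phi(s,\lambda)])$, where $g(s,\lambda):=[I-R(\lambda)]G^*(\Phi(s))+R(\lambda)G^*([I-R(\lambda)]\Phi(s))+[I-R(\lambda)](y(s)-y_d(s))$ involves only the already known processes $\Phi$, $y$ and $y_d$.

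The delicate point is that $f(\cdot,\lambda)$ still contains the unknown difference, so inequalities (\ref{backwardinequality1})--(\ref{backwardinequality2}) cannot be used on all of $[0,T]$ at once. I would fix $\delta>0$ with $2c\delta\|G^*\|^2\le\tfrac12$ (where $c$ is the constant from (\ref{backwardinequality1})--(\ref{backwardinequality2}) and $\|R(\lambda)\|_{\mathcal L(H)}\le1$ by inequality (\ref{resolventinequality})), and apply the estimates on $[T-\delta,T]$. From $\|f(s,\lambda)\|_H^2\le 2\|g(s,\lambda)\|_H^2+2\|G^*\|^2\|\Phi(s)-\Phi(s,\lambda)\|_{\mathcal{L}_{(HS)}(Q^{1/2}(H);H)}^2$ and inequality (\ref{backwardinequality2}), the contribution of $\Phi-\Phi(\cdot,\lambda)$ is absorbed into the left-hand side, which yields $\mathbb E\int_{T-\delta}^T\|\Phi(t)-\Phi(t,\lambda)\|_{\mathcal{L}_{(HS)}(Q^{1/2}(H);H)}^2\,dt\le C_\delta\,\mathbb E\int_{T-\delta}^T\|g(t,\lambda)\|_H^2\,dt$; inserting this back and using inequality (\ref{backwardinequality1}) gives $\sup_{t\in[T-\delta,T]}\mathbb E\,\|z^*(t)-z^*(t,\lambda)\|_H^2\le C_\delta\,\mathbb E\int_{T-\delta}^T\|g(t,\lambda)\|_H^2\,dt$.

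It then remains to show $\mathbb E\int_0^T\|g(t,\lambda)\|_H^2\,dt\to 0$ as $\lambda\to\infty$. For $\mathbb P$-a.e.\ $(t,\omega)$ each summand of $g(t,\lambda)$ tends to $0$: the first and the third by equation (\ref{resolventconvergence}) applied to $G^*(\Phi(t))\in H$ and $y(t)-y_d(t)\in H$; for the second it suffices that $\|[I-R(\lambda)]\Phi(t)\|_{\mathcal{L}_{(HS)}(Q^{1/2}(H);H)}\to0$, which follows by applying equation (\ref{resolventconvergence}) to $\Phi(t)$ evaluated on each element of an orthonormal basis of $Q^{1/2}(H)$ and dominating the defining series, and then using $\|R(\lambda)\|_{\mathcal L(H)}\le1$ together with the boundedness of $G^*$. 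Since $\|I-R(\lambda)\|_{\mathcal L(H)}\le2$, one has the $\lambda$-independent bound $\|g(t,\lambda)\|_H\le 4\|G^*\|\,\|\Phi(t)\|_{\mathcal{L}_{(HS)}(Q^{1/2}(H);H)}+2\|y(t)-y_d(t)\|_H$, which lies in $L^2([0,T]\times\Omega)$ by the defining regularity of the mild solution of system (\ref{backwardstochstokes}) and $y_d\in L^2([0,T];H)$. Lebesgue's dominated convergence theorem then gives $\mathbb E\int_0^T\|g(t,\lambda)\|_H^2\,dt\to0$, so both limits of the lemma hold on $[T-\delta,T]$.

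Finally I would propagate to $[0,T]$. The mild solution of system (\ref{backwardstochstokes}) satisfies, for $0\le t_1\le t_2\le T$, the identity $z^*(t_1)=e^{-A(t_2-t_1)}z^*(t_2)+\int_{t_1}^{t_2}e^{-A(s-t_1)}[G^*(\Phi(s))+y(s)-y_d(s)]\,ds-\int_{t_1}^{t_2}e^{-A(s-t_1)}\Phi(s)\,dW(s)$, and likewise for the approximation. Hence on $[T-2\delta,T-\delta]$ the difference again satisfies the equation of Lemma \ref{backwardlemma}, now with terminal value $z^*(T-\delta)-z^*(T-\delta,\lambda)$ — whose second moment already tends to $0$ by the previous step — and with the same forcing $f$, so repeating the absorption argument (now with the extra harmless term $c\,\mathbb E\,\|z^*(T-\delta)-z^*(T-\delta,\lambda)\|_H^2$) yields the convergences on $[T-2\delta,T-\delta]$. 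Since $\delta$ is fixed, finitely many such steps cover $[0,T]$, which completes the proof. I expect the coupling of $f$ to the unknown difference to be the main obstacle; it is precisely what forces this short-interval-plus-patching scheme instead of a single Gronwall step as in Lemma \ref{frechetconvergence}. A minor additional technicality is the Hilbert--Schmidt norm convergence $\|[I-R(\lambda)]\Phi(t)\|_{\mathcal{L}_{(HS)}(Q^{1/2}(H);H)}\to0$.
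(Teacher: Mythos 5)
Your proposal is correct and follows essentially the same route as the paper's proof: subtract the two mild formulations, split $G^*(\Phi)-R(\lambda)G^*(R(\lambda)\Phi(\cdot,\lambda))$ into resolvent-convergence terms plus a coupling term, apply the a priori estimates (\ref{backwardinequality1})--(\ref{backwardinequality2}) of Lemma \ref{backwardlemma} on a terminal interval short enough to absorb the coupled $\Phi$-difference, pass to the limit via (\ref{resolventconvergence}) and dominated convergence, and patch backward in finitely many steps. Your three-term decomposition differs only in the order of inserted intermediate terms, and your explicit treatment of the Hilbert--Schmidt convergence $\|[I-R(\lambda)]\Phi(t)\|_{\mathcal{L}_{(HS)}(Q^{1/2}(H);H)}\to 0$ is a welcome detail the paper leaves implicit.
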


\begin{proof}
 Let $I$ be the identity operator in $H$.
 By definition, we have for all $\lambda > 0$, all $t \in [0,T]$ and $\mathbb P$-a.s.
 \begin{align} \label{diff2}
  z^*(t) - z^*(t,\lambda) &= \int\limits_t^T e^{-A (s-t)} [G^*(\Phi(s))-R(\lambda) G^*(R(\lambda)\Phi(s,\lambda))] ds \nonumber \\
  &\quad + \int\limits_t^T e^{-A (s-t)} [I-R(\lambda)] \left( y(s) - y_d(s)\right) ds - \int\limits_t^T e^{-A (s-t)} [\Phi(s)-\Phi(s,\lambda)] d W(s).
 \end{align}
 Recall that the operator $G^* \colon \mathcal{L}_{(HS)}(Q^{1/2}(H);H) \rightarrow H$ is linear and bounded.
 Hence, we get for all $\lambda > 0$, all $t \in [0,T]$ and $\mathbb P$-a.s.
 \begin{align*}
  z^*(t) - z^*(t,\lambda) 
  &= \int\limits_t^T e^{-A (s-t)} G^*([I-R(\lambda)]\Phi(s)) ds + \int\limits_t^T e^{-A (s-t)} [I-R(\lambda)] G^*(R(\lambda)\Phi(s)) ds \nonumber \\
  &\quad + \int\limits_t^T e^{-A (s-t)} R(\lambda) G^*(R(\lambda)[\Phi(s)-\Phi(s,\lambda)]) ds + \int\limits_t^T e^{-A (s-t)} [I-R(\lambda)] \left(y(s) - y_d(s)\right) ds \nonumber \\
  &\quad - \int\limits_t^T e^{-A (s-t)} [\Phi(s)-\Phi(s,\lambda)] d W(s).
 \end{align*}
 Note that the assumptions of Lemma \ref{backwardlemma} are fulfilled.
 Thus, inequalities (\ref{backwardinequality1}) and (\ref{backwardinequality2}) hold.
 Let $T_1 \in [0,T)$.
 We obtain for all $\lambda > 0$
 \begin{align}
  &\sup_{t \in [T_1,T]} \mathbb E \, \|z^*(t) - z^*(t,\lambda)\|_H^2 \leq 4 c (T-T_1) \left[ \mathcal I_1(\lambda) + \mathcal I_2(\lambda) \right], \label{ineq8} \\
  &\mathbb E \int\limits_{T_1}^T \|\Phi(t) - \Phi(t,\lambda)\|_{\mathcal{L}_{(HS)}(Q^{1/2}(H);H)}^2 dt \leq 4 c (T-T_1) \left[ \mathcal I_1(\lambda) + \mathcal I_2(\lambda) \right], \label{ineq4}
 \end{align}
 where
 \begin{align*}
  &\mathcal I_1(\lambda) = \mathbb E \int\limits_{T_1}^T \left[ \| G^*([I-R(\lambda)]\Phi(t)) \|_H^2 + \| [I-R(\lambda)] G^*(R(\lambda)\Phi(t)) \|_H^2 + \| [I-R(\lambda)] \left(y(t) - y_d(t)\right) \|_H^2 \right] dt, \\
  &\mathcal I_2(\lambda) = \mathbb E \int\limits_{T_1}^T\| R(\lambda) G^*(R(\lambda)[\Phi(t)-\Phi(t,\lambda)]) \|_H^2 dt.
 \end{align*}
 Using equation (\ref{resolventconvergence}) and the Lebesgue's dominated convergence theorem \cite[Theorem 2.8.1]{mt}, we can conclude
 \begin{equation}\label{convergence1}
  \lim\limits_{\lambda \rightarrow \infty} \mathcal I_1(\lambda) = 0.
 \end{equation}
 By inequality (\ref{resolventinequality}), there exists a constant $C^*>0$ such that for all $\lambda > 0$
 \begin{equation}\label{ineq5}
  \mathcal I_2(\lambda) \leq C^* \, \mathbb E \int\limits_{T_1}^T \|\Phi(t) - \Phi(t,\lambda)\|_{\mathcal{L}_{(HS)}(Q^{1/2}(H);H)}^2 dt.
 \end{equation}
 Due to inequality (\ref{ineq4}) and inequality (\ref{ineq5}), we get for all $\lambda > 0$
 \begin{equation*}
  \mathbb E \int\limits_{T_1}^T \|\Phi(t) - \Phi(t,\lambda)\|_{\mathcal{L}_{(HS)}(Q^{1/2}(H);H)}^2 dt \leq 4 c (T-T_1) \, \mathcal I_1(\lambda) + 4 c \, C^* (T-T_1) \, \mathbb E \int\limits_{T_1}^T \|\Phi(t) - \Phi(t,\lambda)\|_{\mathcal{L}_{(HS)}(Q^{1/2}(H);H)}^2 dt.
 \end{equation*}
 We chose $T_1 \in [0,T)$ such that $4 c \, C^* (T-T_1) < 1$.
 Thus, we have for all $\lambda > 0$
 \begin{equation*}
  \mathbb E \int\limits_{T_1}^T \|\Phi(t) - \Phi(t,\lambda)\|_{\mathcal{L}_{(HS)}(Q^{1/2}(H);H)}^2 dt \leq \frac{4 c (T-T_1) \, \mathcal I_1(\lambda)}{1-4 c \, C^* (T-T_1)}.
 \end{equation*}
 Due to equation (\ref{convergence1}), we can conclude
 \begin{equation} \label{convergence2}
  \lim_{\lambda \rightarrow \infty} \mathbb E \int\limits_{T_1}^T \|\Phi(t) - \Phi(t,\lambda)\|_{\mathcal{L}_{(HS)}(Q^{1/2}(H);H)}^2 dt = 0.
 \end{equation}
 Using inequality (\ref{ineq8}), inequality (\ref{ineq5}), equation (\ref{convergence1}) and equation (\ref{convergence2}), we have
 \begin{equation*}
  \lim_{\lambda \rightarrow \infty} \sup_{t \in [T_1,T]} \mathbb E \, \|z^*(t) - z^*(t,\lambda)\|_H^2 = 0.
 \end{equation*}
 By equation (\ref{diff2}), we get for all $\lambda > 0$, all $t \in [0,T_1]$ and $\mathbb P$-a.s.
 \begin{align*} 
  z^*(t) - z^*(t,\lambda) &= e^{-A (T_1-t)}[z^*(T_1) - z^*(T_1,\lambda)] + \int\limits_t^{T_1} e^{-A (s-t)} [G^*(\Phi(s))-R(\lambda) G^*(R(\lambda)\Phi(s,\lambda))] ds \nonumber \\
  &\quad + \int\limits_t^{T_1} e^{-A (s-t)} [I-R(\lambda)] \left( y(s) - y_d(s)\right) ds - \int\limits_t^{T_1} e^{-A (s-t)} [\Phi(s)-\Phi(s,\lambda)] d W(s).
 \end{align*}
 Again, we find $T_2 \in [0,T_1]$ such that
 \begin{align*}
  &\lim_{\lambda \rightarrow \infty} \sup_{t \in [T_2,T_1]} \mathbb E \, \|z^*(t) - z^*(t,\lambda)\|_H^2 dt = 0,
  &\lim_{\lambda \rightarrow \infty} \mathbb E \int\limits_{T_2}^{T_1} \|\Phi(t) - \Phi(t,\lambda)\|_{\mathcal{L}_{(HS)}(Q^{1/2}(H);H)}^2 dt = 0.
 \end{align*}
 By continuing the method, we obtain the result.
\end{proof}

\section{Main Results}

\subsection{Duality Principle}

Based on the results provided in the previous sections, we are able to show a duality principle.
Since we formulated a control problem with simultaneous distributed controls and boundary controls, we obtain two equations.
The first equation gives us a relation between the mild solution of system (\ref{frechet1}) and the mild solution of the adjoint equation (\ref{backwardstochstokes}).
The second equation provides a relation between the mild solution of system (\ref{frechet2}) and the mild solution of the adjoint equation (\ref{backwardstochstokes}).

\begin{theorem}\label{duality}
 Let $(y(t;u,v))_{t \in [0,T]}$ and $(z^*(t;u,v),\Phi(t;u,v))_{t \in [0,T]}$ be the mild solutions of system (\ref{stochstokes}) and system (\ref{backwardstochstokes}) corresponding to the distributed control $u \in U$ and the boundary control $v \in V$, repsectively.
 Moreover, let $(z_1(t;\tilde u))_{t \in [0,T]}$ and $(z_2(t;\tilde v))_{t \in [0,T]}$ be the mild solutions of system (\ref{frechet1}) and system (\ref{frechet2}) corresponding to the controls $\tilde u \in U$ and $\tilde v \in V$, respectively.
 Then we have for all $\alpha \in (0,1/4)$
 \begin{align}
  &\mathbb E \int \limits_0^T \left\langle y(t;u,v)-y_d(t), z_1(t;\tilde u) \right\rangle_H dt = \mathbb E \int \limits_0^T \left\langle z^*(t;u,v), B \tilde u(t) \right\rangle_H dt, \label{dualityequation1} \\
  &\mathbb E \int \limits_0^T \left\langle y(t;u,v)-y_d(t), z_2(t;\tilde v) \right\rangle_H dt = \mathbb E \int \limits_0^T \left\langle A^{1-\alpha} z^*(t;u,v), A^\alpha D \tilde v(t) \right\rangle_H dt. \label{dualityequation2}
 \end{align}
\end{theorem}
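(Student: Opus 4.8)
The plan is to prove the two identities by approximation. The core difficulty is that the Itô product formula (Lemma \ref{productformula}) applies only to processes given in strong (differential) form, whereas the forward equations \eqref{frechet1}, \eqref{frechet2} and the backward equation \eqref{backwardstochstokes} are only available as mild solutions. This is exactly why Section \ref{sec:approximation} was set up: the approximating systems \eqref{approxfrechet1}, \eqref{approxfrechet2}, \eqref{approxbackwardstochstokes} have $D(A)$-valued mild solutions that, by Lemma \ref{forwardstrong} and Lemma \ref{backwardstrong}, also satisfy strong formulations. So the first step is to fix $\lambda > 0$ and write $z_1(\cdot,\lambda)$ (resp. $z_2(\cdot,\lambda)$) and $(z^*(\cdot,\lambda),\Phi(\cdot,\lambda))$ in their strong forms from those two lemmas, with drift terms $-Az_1(s,\lambda) + R(\lambda)B\tilde u(s)$ and $Az^*(s,\lambda) - R(\lambda)G^*(R(\lambda)\Phi(s,\lambda)) - R(\lambda)(y(s)-y_d(s))$ respectively, and diffusion terms $R(\lambda)G(R(\lambda)z_1(s,\lambda))$ and $\Phi(s,\lambda)$.

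Next I would apply Lemma \ref{productformula} to the pair $X_1 = z_1(\cdot,\lambda)$, $X_2 = z^*(\cdot,\lambda)$ on $[0,T]$. Since $z_1(0,\lambda) = 0$ and $z^*(T,\lambda) = 0$, the boundary terms $\langle X_1^0, X_2^0\rangle_H$ and $\langle X_1(T), X_2(T)\rangle_H$ both vanish, so evaluating the product formula at $t = T$ gives
\begin{align*}
 0 &= \mathbb E \int\limits_0^T \Big[ \langle -Az_1(s,\lambda) + R(\lambda)B\tilde u(s), z^*(s,\lambda)\rangle_H + \langle z_1(s,\lambda), Az^*(s,\lambda) - R(\lambda)G^*(R(\lambda)\Phi(s,\lambda)) - R(\lambda)(y(s)-y_d(s))\rangle_H \\
 &\qquad\qquad + \langle R(\lambda)G(R(\lambda)z_1(s,\lambda)), \Phi(s,\lambda)\rangle_{\mathcal L_{(HS)}(Q^{1/2}(H);H)}\Big] ds.
\end{align*}
The terms involving $A$ cancel because $A$ is self adjoint and $z_1(s,\lambda), z^*(s,\lambda) \in D(A)$. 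Using the self adjointness of $R(\lambda)$ (stated in Section \ref{sec:functionalbackground}) together with the adjoint relation \eqref{adjoint}, the cross term $\langle R(\lambda)G(R(\lambda)z_1(s,\lambda)), \Phi(s,\lambda)\rangle = \langle G(R(\lambda)z_1(s,\lambda)), R(\lambda)\Phi(s,\lambda)\rangle = \langle R(\lambda)z_1(s,\lambda), G^*(R(\lambda)\Phi(s,\lambda))\rangle = \langle z_1(s,\lambda), R(\lambda)G^*(R(\lambda)\Phi(s,\lambda))\rangle$ cancels the $G^*$-term in the drift of $z^*(\cdot,\lambda)$. What survives is
\begin{equation*}
 \mathbb E \int\limits_0^T \langle R(\lambda)z_1(s,\lambda), y(s) - y_d(s)\rangle_H ds = \mathbb E \int\limits_0^T \langle z^*(s,\lambda), R(\lambda)B\tilde u(s)\rangle_H ds,
\end{equation*}
again using self adjointness of $R(\lambda)$ on the left. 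This is the $\lambda$-approximated version of \eqref{dualityequation1}.

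Finally I would pass to the limit $\lambda \to \infty$. On the left, $R(\lambda)z_1(s,\lambda) \to z_1(s)$ in $L^2(\Omega;L^2([0,T];H))$: indeed $\|R(\lambda)z_1(s,\lambda) - z_1(s)\|_H \le \|R(\lambda)[z_1(s,\lambda) - z_1(s)]\|_H + \|[R(\lambda)-I]z_1(s)\|_H \le \|z_1(s,\lambda)-z_1(s)\|_H + \|[R(\lambda)-I]z_1(s)\|_H$ by \eqref{resolventinequality}, and both terms tend to $0$ in the required norm by Lemma \ref{frechetconvergence}(i) and by \eqref{resolventconvergence} with dominated convergence. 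On the right, $z^*(s,\lambda) \to z^*(s)$ in $L^2(\Omega;L^2([0,T];H))$ by Lemma \ref{convergencebackward}, and $R(\lambda)B\tilde u(s) \to B\tilde u(s)$ in the same space by \eqref{resolventconvergence} and dominated convergence; since $y - y_d$ and $B\tilde u$ lie in $L^2(\Omega;L^2([0,T];H))$, Cauchy–Schwarz gives convergence of both inner products, yielding \eqref{dualityequation1}. For \eqref{dualityequation2} the argument is identical with $z_2(\cdot,\lambda)$ in place of $z_1(\cdot,\lambda)$, except the drift term $R(\lambda)B\tilde u(s)$ is replaced by $AR(\lambda)D\tilde v(s)$; the surviving identity then reads $\mathbb E \int_0^T \langle z_2(s), y(s)-y_d(s)\rangle_H ds = \mathbb E \int_0^T \langle z^*(s), AD\tilde v(s)\rangle$ after the limit, and since by Proposition \ref{backwardregularity} $z^*(s) \in D(A^{1-\alpha})$ and by Corollary \ref{dirichletoperatorregularity} (with Propositions \ref{domainfractional1}, \ref{domainfractional2}) $D\tilde v(s) \in D(A^\alpha)$ for $\alpha \in (0,1/4)$, Lemma \ref{fractional}(i) and Lemma \ref{fractionalselfadjoint} let us rewrite $\langle z^*(s), AD\tilde v(s)\rangle_H = \langle A^{1-\alpha}z^*(s), A^\alpha D\tilde v(s)\rangle_H$, giving \eqref{dualityequation2}. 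The main obstacle I anticipate is making the cancellation of the $AR(\lambda)D\tilde v$ term rigorous in the strong formulation of $z_2(\cdot,\lambda)$ — one must check that $AR(\lambda)D\tilde v(s) \in H$ (true since $R(\lambda)$ maps $H$ into $D(A)$ and $D\tilde v(s) \in H$) and that the pairing with $z^*(s,\lambda)$ passes to the limit, for which the regularity $z^*(s,\lambda) \in D(A)$ uniformly and $R(\lambda)AD\tilde v = AR(\lambda)D\tilde v \to AD\tilde v$ ... but $AD\tilde v \notin H$ in general, so the limit must be taken in the equivalent form $\langle A^\beta z^*(s,\lambda), A^{1-\beta}R(\lambda)D\tilde v(s)\rangle_H$ for suitable $\beta < 1/4 < 1-\beta$ using \eqref{resolventcommutes}, and this is where the bookkeeping with fractional powers is most delicate.
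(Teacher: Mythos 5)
Your plan for equation (\ref{dualityequation1}) matches the paper's proof: resolvent-regularized strong solutions, the It\^o product formula, cancellation of the $A$-terms by self adjointness and of the $G$/$G^*$-terms via the self adjointness of $R(\lambda)$ together with (\ref{adjoint}), then the limit $\lambda \to \infty$ using Lemma \ref{frechetconvergence}, Lemma \ref{convergencebackward}, (\ref{resolventconvergence}) and dominated convergence. One step you skip: Lemma \ref{productformula} is stated for processes of the form $X^0 + \int_0^t f(s)\,ds + \int_0^t \Phi(s)\,dW(s)$, whereas Lemma \ref{backwardstrong} gives $z^*(\cdot,\lambda)$ with integrals from $t$ to $T$; the paper converts this into a forward representation via the martingale representation theorem (Proposition \ref{martingalerepresentation}) and identifies the resulting integrand with $\Phi(\cdot,\lambda)$ by uniqueness. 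This is routine, but it should be stated.

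The genuine gap is in the limit passage for (\ref{dualityequation2}). After the product formula you are left with $\mathbb E\int_0^T \langle R(\lambda)A^{1-\alpha}z^*(t,\lambda), A^\alpha D\tilde v(t)\rangle_H\,dt$, and the only convergence available for the backward approximation is $\sup_{t}\mathbb E\,\|z^*(t)-z^*(t,\lambda)\|_H^2\to 0$ (Lemma \ref{convergencebackward}); no bound on $A^{1-\alpha}z^*(\cdot,\lambda)$ uniform in $\lambda$ is established anywhere, so the ``regularity $z^*(s,\lambda)\in D(A)$ uniformly'' that you invoke is not available. Moreover, the fallback you propose, pairing $A^\beta z^*(s,\lambda)$ against $A^{1-\beta}R(\lambda)D\tilde v(s)$ with $\beta<1/4<1-\beta$, puts the large power $1-\beta>3/4$ on $D\tilde v$; since $D\tilde v(s)$ lies only in $D(A^\gamma)$ for $\gamma<1/4$, the vector $A^{1-\beta}R(\lambda)D\tilde v(s)=A^{1-2\beta}R(\lambda)A^\beta D\tilde v(s)$ does not converge in $H$ as $\lambda\to\infty$, so that form is a dead end (the powers must be distributed the other way, as in the statement of the theorem). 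What the paper actually does is approximate $A^\alpha D\tilde v$ in $L^2(\Omega;L^2([0,T];H))$ by processes $v_m$ taking values in $D(A^{1-\alpha})$, so that $A^{1-\alpha}$ can be moved onto $v_m$ and the $H$-convergence of $z^*(\cdot,\lambda)$ suffices for each fixed $m$, and then interchanges the limits in $\lambda$ and $m$ by the Moore--Osgood theorem. Without this density-plus-double-limit argument (or an a priori $\lambda$-uniform bound on $A^{1-\alpha}z^*(\cdot,\lambda)$, which is not proved), your argument for (\ref{dualityequation2}) does not close.
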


\begin{proof}
 For the sake of simplicity, we omit the dependence on the controls.  
 First, we prove the result for the approximations derived in Section \ref{sec:approximation}.
 Let $(z_1(t,\lambda))_{t \in [0,T]}$ and $(z_2(t,\lambda))_{t \in [0,T]}$ be the mild solutions of system (\ref{approxfrechet1}) and system (\ref{approxfrechet2}), respectively.
 Using Lemma \ref{forwardstrong}, we have for all $\lambda > 0$, $t \in [0,T]$ and $\mathbb P$-a.s.
 \begin{align}
  z_1(t,\lambda) &= \int\limits_0^t (-A) z_1(s,\lambda) + R(\lambda) B \tilde u(s) ds + \int\limits_0^t R(\lambda) G(R(\lambda)z_1(s,\lambda)) d W(s), \label{strongrep1} \\
  z_2(t,\lambda) &= \int\limits_0^t (-A) z_2(s,\lambda) + A R(\lambda) D \tilde v(s) ds + \int\limits_0^t R(\lambda) G(R(\lambda)z_2(s,\lambda)) d W(s). \label{strongrep2}
 \end{align}
 Next, let the pair of stochastic processes $(z^*(t,\lambda),\Phi(t,\lambda))_{t \in [0,T]}$ be the mild solution of system (\ref{approxbackwardstochstokes}).
 Due to Lemma \ref{backwardstrong}, we get for all $\lambda > 0$, all $t \in [0,T]$ and $\mathbb P$-a.s.
 \begin{equation}\label{strongrep3}
  z^*(t,\lambda) = \int\limits_t^T (-A) z^*(s,\lambda) + R(\lambda) G^*(R(\lambda)\Phi(s,\lambda)) + R(\lambda) \left( y(s) - y_d(s)\right) ds - \int\limits_t^T \Phi(s,\lambda) d W(s).
 \end{equation}
 By definition, the process $(z^*(t,\lambda))_{t \in [0,T]}$ is predictable.
 Hence, we have for all $\lambda > 0$, all $t \in [0,T]$ and $\mathbb P$-a.s.
 \begin{align*}
  z^*(t,\lambda) &= \mathbb E \left[ \left. \int\limits_0^T (-A) z^*(s,\lambda) + R(\lambda) G^*(R(\lambda)\Phi(s,\lambda)) + R(\lambda) \left( y(s) - y_d(s)\right) ds \right| \mathcal F_t \right] \\
  &\quad - \int\limits_0^t (-A) z^*(s,\lambda) + R(\lambda) G^*(R(\lambda)\Phi(s,\lambda)) + R(\lambda) \left( y(s) - y_d(s)\right) ds.
 \end{align*}
 By the martingale representation theorem given by Proposition \ref{martingalerepresentation} with $(M(t))_{t \in [0,T]}$ satisfying for all $t \in [0,T]$ and $\mathbb P$-a.s. 
 \begin{equation*}
  M(t) = \mathbb E \left[ \left. \int\limits_0^T (-A) z^*(s,\lambda) + R(\lambda) G^*(R(\lambda)\Phi(s,\lambda)) + R(\lambda) \left( y(s) - y_d(s)\right) ds \right| \mathcal F_t \right],
 \end{equation*}
 there exists a unique predictable process $(\Psi(t,\lambda))_{t \in [0,T]}$ with values in $\mathcal{L}_{(HS)}(Q^{1/2}(H);H)$ such that for all $\lambda > 0$, all $t \in [0,T]$ and $\mathbb P$-a.s.
 \begin{align}\label{strongrep4}
  z^*(t,\lambda) &= \mathbb E \left[ \int\limits_0^T (-A) z^*(s,\lambda) + R(\lambda) G^*(R(\lambda)\Phi(s,\lambda)) + R(\lambda) \left( y(s) - y_d(s)\right) ds \right] \nonumber \\
  &\quad - \int\limits_0^t (-A) z^*(s,\lambda) + R(\lambda) G^*(R(\lambda)\Phi(s,\lambda)) + R(\lambda) \left( y(s) - y_d(s)\right) ds + \int\limits_0^t \Psi(s,\lambda) dW(s).
 \end{align}
 Since the pair $(z^*(t,\lambda),\Phi(t,\lambda))_{t \in [0,T]}$ satisfies equation (\ref{strongrep3}) uniquely, we can conclude $\Psi(t,\lambda) = \Phi(t,\lambda)$ for all $\lambda > 0$, almost all $t \in [0,T]$ and $\mathbb P$-almost surely.
 Applying the Itô product formula given by Lemma \ref{productformula} to equation (\ref{strongrep1}) and equation (\ref{strongrep4}), we get for all $\lambda > 0$, all $t \in [0,T]$ and $\mathbb P$-a.s.
 \begin{equation*}
  \left\langle z_1(t,\lambda),z^*(t,\lambda) \right\rangle_H = \mathcal I_1(t,\lambda) + \mathcal I_2(t,\lambda) + \mathcal I_3(t,\lambda) + \mathcal I_4(t,\lambda),
 \end{equation*}
 where
 \begin{align*}
  \mathcal I_1(t,\lambda) &= \int\limits_0^t \left[ \left\langle z_1(s,\lambda), A z^*(s,\lambda) \right\rangle_H - \left \langle z^*(s,\lambda) , A z_1(s,\lambda) \right\rangle_H \right] ds, \\
  \mathcal I_2(t,\lambda) &= \int\limits_0^t \left[\left\langle R(\lambda) G(R(\lambda)z_1(s,\lambda)), \Phi(s,\lambda) \right\rangle_{\mathcal{L}_{(HS)}(Q^{1/2}(H),H)} - \left \langle z_1(s,\lambda), R(\lambda) G^*(R(\lambda)\Phi(s,\lambda)) \right\rangle_H \right] ds, \\
  \mathcal I_3(t,\lambda) &= \int\limits_0^t \left \langle z^*(s,\lambda) , R(\lambda) B \tilde u(s) \right\rangle_H ds - \int\limits_0^t \left \langle z_1(s,\lambda), R(\lambda) \left( y(s) - y_d(s)\right) \right\rangle_H ds, \\
  \mathcal I_4(t,\lambda) &= \int\limits_0^t \left \langle z_1(s,\lambda) ,\Phi(s,\lambda) d W(s) \right\rangle_H + \int\limits_0^t \left \langle z^*(s,\lambda) , R(\lambda) G(R(\lambda)z_1(s,\lambda)) d W(s) \right\rangle_H.
 \end{align*}
 By definition, we have $z^*(T,\lambda) = 0$ for all $\lambda > 0$ and $\mathbb P$-almost surely.
 Hence, we obtain for all $\lambda > 0$ and $\mathbb P$-a.s.
 \begin{align}\label{eq2}
  0 &= \mathcal I_1(T,\lambda) + \mathcal I_2(T,\lambda) + \mathcal I_3(T,\lambda) + \mathcal I_4(T,\lambda).
 \end{align}
 Since the operator $A$ is self adjoint, we have for all $\lambda > 0$ and $\mathbb P$-a.s.
 \begin{equation}\label{eq3}
  \mathcal I_1(T,\lambda) = 0.
 \end{equation}
 Recall that the operator $R(\lambda)$ is self adjoint on $H$.
 Using equation (\ref{adjoint}), we obtain for all $\lambda > 0$ and $\mathbb P$-a.s.
 \begin{equation}\label{eq4}
  \mathcal I_2(T,\lambda) = 0.
 \end{equation}
 By equations (\ref{eq2}) -- (\ref{eq4}) and $\mathbb E \, \mathcal I_4(T,\lambda) = 0$ for all $\lambda > 0$, we get for all $\lambda > 0$
 \begin{equation*}
  0 = \mathbb E \, \mathcal I_3(T,\lambda).
 \end{equation*}
 Hence, we have for all $\lambda > 0$
 \begin{equation}\label{eq5}
  \mathbb E \int\limits_0^T \left \langle R(\lambda) z_1(t,\lambda), y(t) - y_d(t) \right\rangle_H dt = \mathbb E \int\limits_0^T \left \langle R(\lambda) z^*(t,\lambda) , B \tilde u(t) \right\rangle_H dt.
 \end{equation}
 Next, we show that the left hand side and the right hand side of equation (\ref{eq5}) converge as $\lambda \rightarrow \infty$.
 By the Cauchy-Schwarz inequality and inequality (\ref{resolventinequality}), we have for all $\lambda > 0$
 \begin{align*}
  &\left| \mathbb E \int\limits_0^T \left \langle z_1(t), y(t) - y_d(t) \right\rangle_H dt - \mathbb E \int\limits_0^T \left \langle R(\lambda) z_1(t,\lambda), y(t) - y_d(t) \right\rangle_H dt \right|^2 \\
  &\leq 2 \left| \mathbb E \int\limits_0^T \left \langle [I - R(\lambda)] z_1(t), y(t) - y_d(t) \right\rangle_H dt \right|^2 +  2 \left| \mathbb E \int\limits_0^T \left \langle R(\lambda) (z_1(t)-z_1(t,\lambda)), y(t) - y_d(t) \right\rangle_H dt \right|^2 \\
  &\leq 4 \left( \mathbb E \int\limits_0^T \left\| y(t) \right\|_H^2 dt + \int\limits_0^T \left\| y_d(t) \right\|_H^2 dt  \right) \left( \mathbb E \int\limits_0^T \left\| [I - R(\lambda)] z_1(t) \right\|_H^2 dt + \mathbb E \int\limits_0^T \left\| z_1(t)-z_1(t,\lambda) \right\|_H^2 dt\right).
 \end{align*}
 Using equation (\ref{resolventconvergence}), Lebesgue's dominated convergence theorem \cite[Theorem 2.8.1]{mt} and Lemma \ref{frechetconvergence}, we can conclude
 \begin{equation}\label{eq6}
  \lim\limits_{\lambda \rightarrow \infty} \mathbb E \int\limits_0^T \left \langle R(\lambda) z_1(t,\lambda), y(t) - y_d(t) \right\rangle_H dt = \mathbb E \int\limits_0^T \left \langle z_1(t), y(t) - y_d(t) \right\rangle_H dt.
 \end{equation}
 Recall that the operator $B \colon H \rightarrow H$ is bounded.
 Similarly as above, there exists a constant $C^*>0$ such that for all $\lambda > 0$
 \begin{align*}
  &\left| \mathbb E \int\limits_0^T \left \langle z^*(t) , B \tilde u(t) \right\rangle_H dt - \mathbb E \int\limits_0^T \left \langle R(\lambda) z^*(t,\lambda) , B \tilde u(t) \right\rangle_H dt\right|^2 \\
  &\leq 2 \left| \mathbb E \int\limits_0^T \left \langle [I-R(\lambda)] z^*(t) , B \tilde u(t) \right\rangle_H dt \right|^2 + 2 \left| \mathbb E \int\limits_0^T \left \langle R(\lambda) (z^*(t)- z^*(t,\lambda)) , B \tilde u(t) \right\rangle_H dt \right|^2 \\
  &\leq C^* \left( \mathbb E \int\limits_0^T \left\| \tilde u(t) \right\|_H^2 dt \right) \left( \mathbb E \int\limits_0^T \left\| [I - R(\lambda)] z^*(t) \right\|_H^2 dt + \sup_{t \in [0,T]} \mathbb E \, \left\| z^*(t)-z^*(t,\lambda) \right\|_H^2 \right).
 \end{align*}
 By equation (\ref{resolventconvergence}), Lebesgue's dominated convergence theorem \cite[Theorem 2.8.1]{mt} and Lemma \ref{convergencebackward}, we can infer
 \begin{equation*}
  \lim\limits_{\lambda \rightarrow \infty} \mathbb E \int\limits_0^T \left \langle R(\lambda) z^*(t,\lambda) , B \tilde u(t) \right\rangle_H dt = \mathbb E \int\limits_0^T \left \langle z^*(t) , B \tilde u(t) \right\rangle_H dt.
 \end{equation*}
 We conclude that the left hand side and the right hand side of equation (\ref{eq5}) converge as $\lambda \rightarrow \infty$ and equation (\ref{dualityequation1}) holds.
 
 Next, we show that equation (\ref{dualityequation2}) holds.
 Again, we apply Lemma \ref{productformula} to equation (\ref{strongrep2}) and equation (\ref{strongrep4}).
 Similarly to equation (\ref{eq5}), we find for all $\lambda > 0$ and all $\alpha \in (0,1/4)$
 \begin{equation}\label{eq7}
  \mathbb E \int \limits_0^T \left\langle R(\lambda) z_2(t,\lambda), y(t) - y_d(t) \right\rangle_H dt = \mathbb E \int \limits_0^T \left\langle R(\lambda) A^{1-\alpha} z^*(t,\lambda), A^\alpha D \tilde v(t) \right\rangle_H dt.
 \end{equation}
 Similarly to equation (\ref{eq6}), we can conclude
 \begin{equation*}
  \lim\limits_{\lambda \rightarrow \infty} \mathbb E \int\limits_0^T \left \langle R(\lambda) z_2(t,\lambda), y(t) - y_d(t) \right\rangle_H dt = \mathbb E \int\limits_0^T \left \langle z_2(t), y(t) - y_d(t) \right\rangle_H dt.
 \end{equation*}
 Recall that the operator $A^\alpha D \colon V^0(\partial \mathcal D) \rightarrow H$ is bounded for all $\alpha \in (0,1/4)$.
 Hence, the process $(A^\alpha D \tilde v(t))_{t \in [0,T]}$ takes values in $H$ such that $\mathbb E \int_0^T \|A^\alpha D \tilde v(t)\|_H^2 dt < \infty$.
 Since $D(A^{1-\alpha})$ is dense in $H$, there exists a sequence of processes $(v_m(t))_{t \in [0,T]}$, $m \in \mathbb N$, taking values in $D(A^{1-\alpha})$ such that $\mathbb E \int_0^T \|v_m(t)\|_{D(A^{1-\alpha})}^2 dt < \infty$ for each $m \in \mathbb N$ and
 \begin{equation*}
  \lim_{m \rightarrow \infty} \mathbb E \int\limits_0^T \|A^\alpha D \tilde v(t) - v_m(t)\|_H^2 dt = 0.
 \end{equation*}
 Due to Proposition \ref{backwardregularity}, the process $(z^*(t))_{t \in [0,T]}$ takes values in $D(A^{1-\alpha})$ for all $\alpha \in (0,1/4)$.
 By equation (\ref{resolventcommutes}), Lemma \ref{fractionalselfadjoint}, the Cauchy-Schwarz inequality, inequality (\ref{resolventinequality}) and Fubini's theorem, there exists a constant $C^*>0$ such that for all $\lambda > 0$, all $\alpha \in (0,1/4)$ and each $m \in \mathbb N$
 \begin{align*}
  &\left| \mathbb E \int \limits_0^T \left\langle A^{1-\alpha} z^*(t), v_m(t) \right\rangle_H dt - \mathbb E \int \limits_0^T \left\langle R(\lambda) A^{1-\alpha} z^*(t,\lambda), v_m(t) \right\rangle_H dt\right|^2 \\
  &\leq 2 \left| \mathbb E \int\limits_0^T \left \langle [I-R(\lambda)] z^*(t) , A^{1-\alpha} v_m(t) \right\rangle_H dt \right|^2 + 2 \left| \mathbb E \int\limits_0^T \left \langle R(\lambda) (z^*(t)- z^*(t,\lambda)) , A^{1-\alpha} v_m(t) \right\rangle_H dt \right|^2 \\
  &\leq C^* \left( \mathbb E \int\limits_0^T \left\| v_m(t) \right\|_{D(A^{1-\alpha})}^2 dt \right) \left( \mathbb E \int\limits_0^T \left\| [I - R(\lambda)] z^*(t) \right\|_H^2 dt + \sup_{t \in [0,T]} \mathbb E \, \left\| z^*(t)-z^*(t,\lambda) \right\|_H^2 \right).
 \end{align*}
 Using equation (\ref{resolventconvergence}), Lebesgue's dominated convergence theorem \cite[Theorem 2.8.1]{mt} and Lemma \ref{convergencebackward}, we can infer for each $m \in \mathbb N$
 \begin{equation*}
  \lim\limits_{\lambda \rightarrow \infty} \mathbb E \int \limits_0^T \left\langle R(\lambda) A^{1-\alpha} z^*(t,\lambda), v_m(t) \right\rangle_H dt = \mathbb E \int \limits_0^T \left\langle A^{1-\alpha} z^*(t), v_m(t) \right\rangle_H dt.
 \end{equation*}
 Due to the Moore-Osgood theorem \cite[Theorem 7.11]{poma}, we get
 \begin{align*}
  \lim\limits_{\lambda \rightarrow \infty} \mathbb E \int \limits_0^T \left\langle R(\lambda) A^{1-\alpha} z^*(t,\lambda), A^\alpha D \tilde v(t) \right\rangle_H dt 
  &= \lim\limits_{\lambda \rightarrow \infty} \lim\limits_{m \rightarrow \infty}\mathbb E \int \limits_0^T \left\langle R(\lambda) A^{1-\alpha} z^*(t,\lambda), v_m(t) \right\rangle_H dt \\
  &= \lim\limits_{m \rightarrow \infty} \lim\limits_{\lambda \rightarrow \infty} \mathbb E \int \limits_0^T \left\langle R(\lambda) A^{1-\alpha} z^*(t,\lambda), v_m(t) \right\rangle_H dt \\
  &= \mathbb E \int \limits_0^T \left\langle A^{1-\alpha} z^*(t), A^\alpha D \tilde v(t) \right\rangle_H dt.
 \end{align*}
 We conclude that the left hand side and the right hand side of equation (\ref{eq7}) converge as $\lambda \rightarrow \infty$ and equation (\ref{dualityequation2}) holds.
\end{proof}

\subsection{The Optimal Controls}

Based on the optimality conditions given by equation (\ref{optcondition1}) and equation (\ref{optcondition2}), we deduce formulas of the optimal controls using the duality principle derived in the previous theorem.

\begin{theorem}
 Let $(z^*(t;u,v),\Phi(t;u,v))_{t \in [0,T]}$ be the mild solution of system (\ref{backwardstochstokes}) corresponding to the controls $u \in U$ and $v \in V$.
 Then the optimal controls $\overline u \in U$ and $\overline v \in V$ satisfy for all $\alpha \in (0,\frac{1}{4})$, almost all $t \in [0,T]$ and $\mathbb P$-a.s.
 \begin{align}
  &\overline u(t) = - \frac{1}{\kappa_1} \, B^* z^*(t;\overline u,\overline v) , \label{optimalcontrol1} \\
  &\overline v(t) = - \frac{1}{\kappa_2} \, K^* A^{1-\alpha} z^*(t;\overline u,\overline v), \label{optimalcontrol2}
 \end{align}
 where $B^* \in \mathcal L(H)$ and $K^* \in \mathcal L(H;V^0(\partial \mathcal D))$ are the adjoint operators of $B \in \mathcal L(H)$ and $K = A^\alpha D \in \mathcal L(V^0(\partial \mathcal D);H)$, respectively.
\end{theorem}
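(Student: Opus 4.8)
The plan is to combine the necessary and sufficient optimality conditions (\ref{optcondition1})--(\ref{optcondition2}) with the explicit Fr\'echet derivatives from Theorem \ref{costfunctionalfrechet} and the duality principle from Theorem \ref{duality}. First I would rewrite $d_u J(\overline u,\overline v)[\tilde u]$: by Theorem \ref{costfunctionalfrechet} it equals $\mathbb E \int_0^T \langle y(t;\overline u,\overline v) - y_d(t), z_1(t;\tilde u)\rangle_H\, dt + \kappa_1\, \mathbb E \int_0^T \langle \overline u(t), \tilde u(t)\rangle_H\, dt$, and by the duality equation (\ref{dualityequation1}) the first term equals $\mathbb E \int_0^T \langle z^*(t;\overline u,\overline v), B\tilde u(t)\rangle_H\, dt = \mathbb E \int_0^T \langle B^* z^*(t;\overline u,\overline v), \tilde u(t)\rangle_H\, dt$, where $B^* \in \mathcal L(H)$ is the adjoint of $B$. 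Hence the optimality condition $d_u J(\overline u,\overline v)[\tilde u] = 0$ for all $\tilde u \in U$ becomes $\mathbb E \int_0^T \langle B^* z^*(t;\overline u,\overline v) + \kappa_1 \overline u(t), \tilde u(t)\rangle_H\, dt = 0$ for every $\tilde u \in U$.

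The next step is to verify that $t \mapsto B^* z^*(t;\overline u,\overline v) + \kappa_1 \overline u(t)$ is itself an element of $U$. Since $z^*$ is predictable with values in $H$ and $B^* \in \mathcal L(H)$, the process $B^* z^*$ is predictable, and $\sup_{t\in[0,T]} \mathbb E \|z^*(t)\|_H^2 < \infty$ gives $\mathbb E \int_0^T \|B^* z^*(t)\|_H^2\, dt < \infty$; together with $\overline u \in U$ this shows $B^* z^*(\cdot;\overline u,\overline v) + \kappa_1 \overline u \in U$. Because $U$ carries the inner product of $L^2(\Omega; L^2([0,T];H))$, an element of $U$ that is orthogonal to all of $U$ must vanish, which yields (\ref{optimalcontrol1}) for almost all $t \in [0,T]$ and $\mathbb P$-a.s.

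For (\ref{optimalcontrol2}) I would proceed analogously, now using the second part of Theorem \ref{costfunctionalfrechet} and the duality equation (\ref{dualityequation2}): writing $K = A^\alpha D \in \mathcal L(V^0(\partial\mathcal D);H)$, the term $\mathbb E \int_0^T \langle y(t;\overline u,\overline v) - y_d(t), z_2(t;\tilde v)\rangle_H\, dt$ equals $\mathbb E \int_0^T \langle A^{1-\alpha} z^*(t;\overline u,\overline v), K\tilde v(t)\rangle_H\, dt = \mathbb E \int_0^T \langle K^* A^{1-\alpha} z^*(t;\overline u,\overline v), \tilde v(t)\rangle_{V^0(\partial\mathcal D)}\, dt$ with $K^* \in \mathcal L(H;V^0(\partial\mathcal D))$. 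Thus $d_v J(\overline u,\overline v)[\tilde v] = 0$ for all $\tilde v \in V$ forces $K^* A^{1-\alpha} z^*(\cdot;\overline u,\overline v) + \kappa_2 \overline v(\cdot)$ to be orthogonal, in $L^2(\Omega;L^2([0,T];V^0(\partial\mathcal D)))$, to all of $V$, hence zero, which is (\ref{optimalcontrol2}). The admissibility check here is the delicate point: one needs $A^{1-\alpha} z^*(\cdot;\overline u,\overline v)$ to be a predictable, square-integrable $H$-valued process, which is exactly the content of Proposition \ref{backwardregularity} applied with $\varepsilon = 1-\alpha \in (3/4,1) \subset [0,1)$; then boundedness of $K^*$ makes $K^* A^{1-\alpha} z^*(\cdot;\overline u,\overline v)$ an admissible boundary control, so it lies in $V$ and the orthogonality argument applies.

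I expect the main obstacle to be bookkeeping rather than conceptual: making sure all the regularity hypotheses needed to invoke Theorem \ref{duality} are in force (in particular that $A^\alpha D \tilde v$ takes values in $H$ and that $z^*$ has the $D(A^{1-\alpha})$-regularity of Proposition \ref{backwardregularity}), and justifying the passage from ``the $L^2$-pairing vanishes against every admissible test control'' to the pointwise identity — which is immediate once one observes that the candidate right-hand sides are themselves admissible controls, so the difference is an element of the Hilbert space orthogonal to the whole space. No fixed-point or convergence argument is required at this stage, these having already been absorbed into Theorem \ref{existencestochstokes}, Theorem \ref{duality}, and the convergence lemmas of Section \ref{sec:approximation}; note also that the formulas (\ref{optimalcontrol1})--(\ref{optimalcontrol2}) are implicit, since $z^*$ depends on $\overline u$ and $\overline v$ through $y$.
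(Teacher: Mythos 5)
Your proposal is correct and follows essentially the same route as the paper's own proof: apply the optimality conditions together with Theorem \ref{costfunctionalfrechet}, substitute via the duality identities (\ref{dualityequation1})--(\ref{dualityequation2}), pass the adjoints $B^*$ and $K^*$ across the pairings, and conclude from the vanishing of the $L^2(\Omega;L^2)$-pairing against all admissible test controls. The only difference is that you make explicit the admissibility check that the candidate right-hand sides lie in $U$ and $V$ (invoking Proposition \ref{backwardregularity} with $\varepsilon=1-\alpha$), which the paper leaves implicit; this is a welcome addition, not a deviation.
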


\begin{proof}
 Let $(y(t;u,v))_{t \in [0,T]}$ and $(z_1(t;u))_{t \in [0,T]}$ be the mild solutions of system (\ref{stochstokes}) and system (\ref{frechet1}) corresponding to the controls $u \in U$ and $v \in V$, respectively.
 Using equation (\ref{optcondition1}) and Theorem \ref{costfunctionalfrechet}, the optimal control $\overline u \in U$ satisfies for every $\tilde u \in U$
 \begin{equation*}
  \mathbb E \int\limits_0^T \left\langle y(t;\overline u,\overline v) -y_d(t), z_1(t;\tilde u) \right\rangle_H dt + \kappa_1\, \mathbb E \int\limits_0^T \left\langle \overline u(t), \tilde u(t) \right\rangle_H dt = 0.
 \end{equation*}
 By Theorem \ref{duality}, we obtain for every $\tilde u \in U$
 \begin{equation*}
  \mathbb E \int \limits_0^T \left\langle z^*(t;\overline u,\overline v), B \tilde u(t) \right\rangle_H dt + \kappa_1\, \mathbb E \int\limits_0^T \left\langle \overline u(t), \tilde u(t) \right\rangle_H dt = 0.
 \end{equation*}
 Hence, we get for every $\tilde u \in U$
 \begin{equation*}
  \mathbb E \int \limits_0^T \left\langle B^* z^*(t;\overline u,\overline v) + \kappa_1\, \overline u(t), \tilde u(t) \right\rangle_H dt = 0.
 \end{equation*}
 Therefore, the optimal control $\overline u \in U$ satisfies equation (\ref{optimalcontrol1}) for almost all $t \in [0,T]$ and $\mathbb P$-almost surely.
 
 Let $(z_2(t;v))_{t \in [0,T]}$ be the mild solution of system (\ref{frechet2}) corresponding to the control $v \in V$.
 Due to equation (\ref{optcondition2}) and Theorem \ref{costfunctionalfrechet}, the optimal control $\overline v \in V$ fulfills the following equation for every $\tilde v \in V$:
 \begin{equation*}
  \mathbb E \int\limits_0^T \left\langle y(t;\overline u,\overline v) -y_d(t), z_2(t;\tilde v) \right\rangle_H dt + \kappa_2\, \mathbb E \int\limits_0^T \left\langle \overline v(t), \tilde v(t) \right\rangle_{V^0(\partial \mathcal D)} dt = 0.
 \end{equation*}
 By Theorem \ref{duality}, we have for all $\alpha \in (0,\frac{1}{4})$ and every $\tilde v \in V$ 
 \begin{equation*}
  \mathbb E \int \limits_0^T \left\langle A^{1-\alpha} z^*(t;\overline u,\overline v), A^\alpha D \tilde v(t) \right\rangle_H dt + \kappa_2\, \mathbb E \int\limits_0^T \left\langle \overline v(t), \tilde v(t) \right\rangle_{V^0(\partial \mathcal D)} dt = 0.
 \end{equation*}
 Hence, we get for all $\alpha \in (0,\frac{1}{4})$ and every $\tilde v \in V$
 \begin{equation*}
  \mathbb E \int \limits_0^T \left\langle K^* A^{1-\alpha} z^*(t;\overline u,\overline v) + \kappa_2\, \overline v(t), \tilde v(t) \right\rangle_{V^0(\partial \mathcal D)} dt = 0.
 \end{equation*}
 Therefore, the optimal control $\overline v \in V$ satisfies equation (\ref{optimalcontrol2}) for all $\alpha \in (0,\frac{1}{4})$, almost all $t \in [0,T]$ and $\mathbb P$-almost surely.
\end{proof}

\begin{remark}
 Let us denote by $(\overline y(t))_{t \in [0,T]}$ and $(\overline z^*(t),\overline \Phi(t))_{t \in [0,T]}$ the mild solutions of system (\ref{stochstokes}) and system (\ref{backwardstochstokes}) corresponding to the optimal controls $\overline u \in U$ and $\overline v \in V$, respectively.
 As a consequence of the previous theorem, the optimal controls can be computed by solving the stochastic boundary value problem imposed by the following system of coupled forward-backward SPDEs:
 \begin{equation}\label{forwardbackwardspde}
  \left\{
  \begin{aligned}
   d \, \overline y(t) &= \left[- A \overline y(t) - \frac{1}{\kappa_1} \, B B^* \overline z^*(t) - \frac{1}{\kappa_2} \, A D K^* A^{1-\alpha} \overline z^*(t) \right] dt + G(\overline y(t)) d W(t), \\
   d \, \overline z^*(t) &= -\left[ -A \overline z^*(t) + G^*\left(\overline \Phi(t)\right) + \overline y(t) - y_d(t)\right] dt + \overline \Phi(t) d W(t), \\
   \overline y(0) &= \xi, \quad \overline z^*(T) =0.
  \end{aligned} 
  \right.
 \end{equation}
 As a next step, computational methods for solving system (\ref{forwardbackwardspde}) need to be developed.
\end{remark}

\section{Conclusion}

In this paper, we considered a control problem constrained by the stochastic Stokes equations on connected and bounded domains with linear multiplicative noise, where controls are defined inside the domain as well as on the boundary.

We proved an existence and uniqueness result for the mild solution of the stochastic Stokes equations dependent on inhomogeneous tangential boundary conditions.
Based on the Fr\'echet derivative of the cost functional, we stated necessary and sufficient optimality conditions the optimal distributed control as well as the optimal boundary control have to satisfy.
Using the adjoint equation given by a backward SPDE, a duality principle was derived such that we deduced explicit formulas for the optimal controls.
As a consequence, the optimal velocity field can be obtained by solving a system of coupled forward-backward SPDEs.

For engineering applications to control problems of fluid dynamics, the inflow is often used as a boundary control, see \cite{sans} and the references therein.
These boundary controls can not be covered by tangential boundary conditions and thus remain as an open problem.

\section*{Acknowledgement}

This research is supported by a research grant of the 'International Max Planck Research School (IMPRS) for Advanced Methods in Process and System Engineering', Magdeburg.
The authors would like to thank Prof. Wilfried Grecksch of the Martin Luther University Halle-Wittenberg for his helpful advice on various technical issues.

\bibliography{references_stokes}
\bibliographystyle{plain}

\end{document}